\documentclass[10pt,a4paper,reqno,oneside]{amsart}

\usepackage{amsrefs}

\evensidemargin 0in
\oddsidemargin 0in
\textwidth 6.0truein
\topmargin -0.5truein
\textheight 9truein
\usepackage{array}
\usepackage{url}
\usepackage{multicol} 
\usepackage[linktocpage = true]{hyperref}

\hypersetup{
 colorlinks = true,
 citecolor = blue,
}
\usepackage{color}
\definecolor{red}{rgb}{1,0,0}
\definecolor{green}{rgb}{0,1,0}
\definecolor{blue}{rgb}{0,0,1}
\definecolor{refkey}{gray}{.625}
\definecolor{labelkey}{gray}{.625}
\usepackage[a4paper]{geometry}
\usepackage{amsmath,amssymb}
\usepackage{amssymb}
\usepackage{amsmath}
\usepackage{amsthm}
\usepackage{enumerate}
\usepackage{graphicx}

\usepackage{tikz-cd}
\allowdisplaybreaks

\makeatletter



\theoremstyle{plain}
\newtheorem{thm}{\protect\theoremname}[section]
\newtheorem{prop}[thm]{\protect\propositionname}

\newtheorem{lem}[thm]{\protect\lemmaname}

\theoremstyle{definition}
 
\newtheorem{defn}[thm]{\protect\definitionname} 
\newtheorem{remark}[thm]{\protect\remarkname} 
\newtheorem{notation}[thm]{\protect\notationname}

\AtBeginDocument{
	
}

\makeatother

  \providecommand{\corollaryname}{Corollary}
  \providecommand{\examplename}{Example}
  \providecommand{\lemmaname}{Lemma}
  \providecommand{\propositionname}{Proposition}
  \providecommand{\theoremname}{Theorem}
  \providecommand{\definitionname}{Definition}
  \providecommand{\remarkname}{Remark}
  \providecommand{\notationname}{Notation}

  \makeatletter
  \newcommand{\be}{%
  \begingroup
  \eqnarray%
   \@ifstar{\nonumber}{}%
  }

\makeatother


\begin{document}

\title[Drinfeld Module and Weil pairing over Dedekind domain of class number two]{Drinfeld Module and Weil pairing over Dedekind domain of class number two}
\thanks{ Research partially supported by NSFC grants 
	12071247/12101616(Hu),  Guangdong Basic and Applied Basic Research Foundation No. 2021A1515110654(Huang) and
 the Basic and Applied Basic Research of Guangzhou Basic Research Program No. 
202201010234(Huang) .}
\thanks{$^*$ The corresponding author.}

\author{Chuangqiang Hu}
\address{Beijing Institute of Mathematical Sciences and Applications, Beijing, 101408, China}

\email{\href{huchq@bimsa.cn}{huchq@bimsa.cn}}
 
\author{Xiao-Min Huang$^*$}
\address{School of Mathematics and statistics, 
Guangdong University of Technology, Guangzhou, Guangdong 510006, 
Guangdong, China}
\email{\href{mahuangxm@gdut.edu.cn}{mahuangxm@gdut.edu.cn}}

\allowdisplaybreaks

\begin{abstract}
The primary objective of this paper is to derive explicit formulas for
rank one and rank two Drinfeld modules over a specific domain denoted by
$\mathcal{A}$. This domain corresponds to the projective line associated
with an infinite place of degree two. To achieve the goals, we construct
a pair of  standard rank one Drinfeld modules whose coefficients are in the Hilbert
class field of $\mathcal{A}$. We demonstrate that the period lattice of
the exponential functions corresponding to both modules behaves similarly
to the period lattice of the Carlitz module, the standard rank one Drinfeld
module defined over rational function fields. Moreover, we employ Anderson's
$t$-motive to obtain the complete family of rank two Drinfeld modules.
This family is parameterized by the invariant
$J = \lambda ^{q^{2}+1}$ which effectively serves as the counterpart of
the $j$-invariant for elliptic curves. Building upon the concepts introduced
by van~der~Heiden, particularly with regard to rank two Drinfeld modules,
we are able to reformulate the Weil pairing of Drinfeld modules of any
rank using a specialized polynomial in multiple variables known as the
Weil operator. As an illustrative example, we provide a detailed examination
of a more explicit formula for the Weil pairing and the Weil operator of
rank two Drinfeld modules over the domain $\mathcal{A}$.
\end{abstract}
\maketitle{}

\textbf{Key words:} ~Drinfeld module; Drinfeld modular curve; Period lattice; Weil pairing.

	{\textbf{AMS subject classification:} 58A50, 17B70, 16E45, 53C05, 53C12. }

\tableofcontents

\section{Introduction}
\label{sec1}

Drinfeld modules are function field analogues of elliptic curves that were
studied by Drinfeld \cite{DVG74} in the 70's for proving the Langlands
conjectures for $\mathrm{GL}_{2}$. Drinfeld module encompasses a rich arithmetic
theory, and indeed a number of special cases were already studied by Carlitz
and Wade (see \cite{CL35,Wade46} for instance) prior to Drinfeld's general
definition. Works of many authors (such as Drinfeld, Hayes, Goss, Gekeler,
Anderson, Brownawell, Thakur, Papanikolas and Cornelissen, to name a few)
established many striking similarities, and yet a few astounding differences
between elliptic curves and Drinfeld modules (particularly in rank 2);
the reader may consult \cite{Goss96}, \cite{P23},
\cite{PB21} and \cite{Tha04} for a more elaborate discussion. The comparison
of similarities and differences in the statements (and proofs) between
corresponding results in the classical and Drinfeld theories provides valuable
insights on some common themes in number theory.

\subsection{Explicit representations of Drinfeld modules}
\label{sec1.1}

Many explicit computations of Drinfeld modules are based on
polynomial ring, since polynomial ring is free generated,
and there is no obstruction for constructing Drinfeld modules. Precisely,
for a polynomial ring $ A = \mathbb{F}_{q}[T] $, a rank $r$ Drinfeld module
$ \phi $ is represented as a twist polynomial
%
\begin{equation}
\label{Eq:phioverpoly}
\phi _{T} = \iota (T) + \sum _{i=1}^{r} g_{i} \tau ^{i},
\end{equation}
where $\tau $ is the $q$-th Frobenius endomorphism. Among these Drinfeld
modules, the rank one module of the form
\begin{equation*}
C_{T} = T + \tau
\end{equation*}
is called Carlitz module. One of the motivations for studying the Carlitz
module is to explore explicit class field theory for the rational function
field as in \cite{Car38}.

There are few explicit instances of Drinfeld modules defined over non-polynomial
rings. In \cite{DH94}, Dummit and Hayes computed the explicit form of the
sgn-normalized rank-one Drinfeld modules associated with all elliptic curves
over $\mathbb F_{q}$ for $4\leqslant q\leqslant 13$, where the infinite
place is ramified. Extended this work, Bae and Kang \cite{BK95} investigated
the rank one Drinfeld modules on elliptic curves and hyper-elliptic curves
in the case that the infinite place is inert. In \cite{GP18}, Green and
Papanikolas made a detailed account of sign-normalized rank one Drinfeld
modules over the coordinate ring of an elliptic curve, which provide a
parallel theory to the Carlitz module.

In the present paper, we focus on the domain
$ \mathcal{A}:= \mathbb{F}_{q} [x, y] $, where $ x$ and
$ y $ verify the relation:
\begin{equation*}
y^{2} - \mathrm{Tr}(\zeta ) xy + \zeta ^{q+1} x^{2} - x = 0,
\end{equation*}
and $ \zeta $ is a generator of $ \mathbb{F}_{q^{2}} $. In
fact, the fraction field of $ \mathcal{A}$ is equal to the rational function
field $\mathbb F_{q}(t)$ with $t=\frac{y}{x}$. From a geometric perspective,,
it corresponds to the affine domain of a degree-two-point truncated projective
line. We describe clearly the rank one and rank two Drinfeld modules over
$ \mathcal{A}$. Considering the leading terms, Drinfeld modules are split
into two types, say $ \zeta $-type and $ \zeta ^{q} $-type. According to
the Hayes module theory, the two types are in one-to-one correspondence
with the ideal classes of $ \mathcal{A}$. Among these, we obtain two standard
rank one Drinfeld modules corresponding to the ideal classes of
$ \mathcal{A}$, denoted by $ \Psi $ and $ \Psi ^{\sigma }$. By abuse of terminology we set $ T := \frac{1}{t -\zeta ^{q}}$ and
$ T ^{\sigma }:= \frac{1}{t -\zeta}$. Consequently, the poles of pair
$(T,\, T^{\sigma})$ are located at the truncated point of the projective
line. This notation for $T$ is consistent with its usage in the Carlitz
module. With these notations, $ \Psi $ and $\Psi ^{\sigma} $ are written
as
\begin{equation*}
\begin{cases}
\Psi _{x} = T^{\sigma -1} \tau ^{2} + (T^{\sigma}+T^{q-1+\sigma})
\tau + x
\\
\Psi _{y} = \zeta T^{\sigma -1} \tau ^{2} +(t T^{\sigma}+\zeta T^{q-1+
\sigma} )\tau +y,
\end{cases}
\end{equation*}
and
\begin{equation*}
\begin{cases}
\Psi _{x}^{\sigma }= T^{1-\sigma}\tau ^{2} + (T+T^{(q-1)\sigma +1})
\tau + x
\\
\Psi _{y}^{\sigma }= \zeta ^{q} T^{1-\sigma} \tau ^{2} +(t T+\zeta ^{q}T^{(q-1)
\sigma +1} )\tau + y.
\end{cases}
\end{equation*}
We then proved that the period lattice of $ \Psi $ is given by
\begin{equation*}
\frac{ \sqrt[q-1]{-[2]}}{[1]} \prod _{i = 1}^{\infty} \left (1-
\frac{[i]}{[i+2]} \right ) T^{\sigma -1} \cdot \mathcal{A},
\end{equation*}
where $[2k-1] = T^{q^{2k-1}}-T^{\sigma} $, $[2k ] = T^{q^{2k}}-T$ with
$k\geqslant 1$, and $\sqrt[q-1]{-[2]}$ denotes a $(q-1)$-th root of
$ -[2] $. 
Analogously, the period lattice of $ \Psi ^{\sigma }$ is given
by  
    \begin{equation*}
\frac{\sqrt[q-1]{-[2] T^{\sigma -1 }}}{[1]} \prod _{i = 1}^{\infty} \left (1-
\frac{[i]}{[i+2]} \right ) T^{\sigma }\cdot I_{\infty}^{-1},
\end{equation*}
    where $\sqrt[q-1]{-[2] T^{\sigma-1}}$ denotes a $(q-1)$-th root of
$ -[2] T^{\sigma -1 }$ and $I_{\infty}^{-1}  $ is a fractional ideal
of $ \mathcal{A}$ generated by $1,  \frac{y}{x} (= t) $.

The construction of rank two Drinfeld modules is involved with the Anderson's
$t$-motives \cite{AGW86}. For simplicity, we focus on the
$ \zeta ^{q} $-type and obtain the following complete family.
%
\begin{thm}
\label{Thm:cfPhiJ}
Let $ \nu $ be a $(q+1)$-th root of $ -T^{\sigma +q} $. There
exists a complete family $ \Phi ^{J} $ of rank two Drinfeld modules over
$ \mathcal{A} $-fields parameterized by $ J $, represented by the pair
$ (\Phi _{x}^{J}, \Phi _{y}^{J})$, where
%
\begin{align}
\nonumber
\Phi _{x}^{J} =& J^{q^{2}+q} \tau ^{4}+
\frac{J^{q^{2}+q}-J^{q+1}}{1-\zeta ^{1-q}} \tau ^{3}
\\
\label{Eq:PhiJx}
& +\Big(\frac{\zeta ^{q-1} J^{q+1}}{(1-\zeta ^{q-1})^{2}} +
\frac{\nu T^{q(q-1)(1-\sigma )-\sigma}J^{q}}{\zeta ^{q}-\zeta}-
\frac{T J}{(\zeta ^{q}-\zeta )\nu } +
\frac{T^{\sigma q-q}}{\zeta ^{q+1}} \Big) \tau ^{2}
\\
\nonumber
& +\Big(\frac{T^{\sigma q}+T^{\sigma}}{\zeta }+
\frac{(T^{\sigma +q}+x)J }{(1-\zeta ^{1-q})\nu}\Big) \tau +x,
\end{align}
and
%
\begin{align}
\nonumber
\Phi _{y}^{J} =& \zeta ^{q} J^{q^{2}+q} \tau ^{4}+ \zeta ^{q} \Big(
\frac{J^{q^{2}+q}-\zeta ^{1-q}J^{q+1}}{1-\zeta ^{1-q}} \Big)\tau ^{3}
\\
\label{Eq:PhiJy}
& + \zeta ^{q} \Big(\frac{ J^{q+1}}{(1-\zeta ^{q-1})^{2}}+
\frac{\nu T^{q(q-1)(1-\sigma )-\sigma}J^{q}}{\zeta ^{q}-\zeta} -
\frac{T J}{(\zeta ^{q}-\zeta )\nu } +
\frac{T^{\sigma q-q}}{\zeta ^{q+1}} \Big) \tau ^{2}
\\
\nonumber
& + \zeta ^{q} \Big(
\frac{\zeta ^{q} T^{\sigma q}+t T^{\sigma}}{\zeta ^{q+1}}+
\frac{(\zeta T^{\sigma +q}+y)J}{(\zeta ^{q}-\zeta )\nu }\Big) \tau +y.
\end{align}
The wedge product of $ \Phi ^{J} $, i.e.,
$ \Psi ^{J} = \wedge ^{2} \Phi ^{J} $ is represented by
\begin{equation*}
\begin{cases}
\Psi _{x}^{J} = -J^{q+1} \tau ^{2}-(\frac{x+T^{q+\sigma}}{\nu})J
\tau +x
\\
\Psi _{y}^{J} = -\zeta J^{q+1}\tau ^{2} -(
\frac{y+\zeta T^{q+\sigma}}{\nu})J\tau +y .
\end{cases}
\end{equation*}
\end{thm}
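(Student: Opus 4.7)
The plan is to work in Anderson's $t$-motive framework: a rank two Drinfeld module $\Phi$ over an $\mathcal{A}$-field $L$ corresponds to a left $\mathcal{A}\{\tau\}$-module $M_\Phi$ that is projective of $\mathcal{A}$-rank two, with the $\zeta^q$-type condition fixing the isomorphism class of the projective structure at the infinite place. Since the infinite place has degree two and the rank is two, the $\tau$-degrees of $\Phi_x$ and $\Phi_y$ are forced to be four. I would start from the ansatz $\Phi_x = \sum_{i=0}^{4} a_i \tau^i$ and $\Phi_y = \sum_{i=0}^{4} b_i \tau^i$, imposing $a_0 = x$, $b_0 = y$, and the $\zeta^q$-type normalization $b_4 = \zeta^q a_4$; the remaining eight coefficients are then to be determined.

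The essential constraint is the operator identity $\Phi_y^2 - \mathrm{Tr}(\zeta)\Phi_x\Phi_y + \zeta^{q+1}\Phi_x^2 - \Phi_x = 0$ in $\mathcal{A}\{\tau\}$, forced by the defining relation of $\mathcal{A}$. Expanding and equating the coefficient of each $\tau^k$ for $k=0,1,\ldots,8$ yields a recursive system of equations involving the Frobenius twists $a_i^{q^j}, b_i^{q^j}$. Solving this top-down, I would rename the top coefficient as $J^{q^2+q}$, the exponent being chosen so that the residual normalization freedom $\tau \mapsto c\tau c^{-1}$ acts on $J$ by $J \mapsto c^{q^2+1} J$, which makes $J = \lambda^{q^2+1}$ the correct isomorphism invariant. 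The $(q+1)$-th root $\nu$ of $-T^{\sigma+q}$ then enters naturally when solving the $\tau^3$-coefficient equation (as a $(q+1)$-th root must be extracted), and the remaining coefficients collapse to the stated rational expressions in $\zeta$, $T$, $T^\sigma$, and $J$. Completeness of the family follows from checking that every $\zeta^q$-type rank two Drinfeld module is isomorphic to $\Phi^J$ for a unique $J$, using the $c^{q^2+1}$-action on the top coefficient to exhaust the moduli.

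For the wedge product formula, I would invoke the characterization, implicit in van~der~Heiden's work and recalled in the present paper's formalism, that the rank one Drinfeld module $\wedge^2 \Phi$ corresponds to the $t$-motive $\wedge^2_{\mathcal{A}} M_\Phi$, on which $\tau$ acts as the determinant of its $2\times 2$ matrix representation on $M_\Phi$. Given the concrete two-generator presentation of $M_{\Phi^J}$ produced in the previous step, computing $\wedge^2 \Phi_x^J$ and $\wedge^2 \Phi_y^J$ reduces to determinants of $2\times 2$ blocks built from the already-determined coefficients; a direct evaluation produces the claimed $\Psi^J$. The principal obstacle throughout is the middle step: the quadratic identity is a degree-eight polynomial equation in a noncommutative ring whose coefficient conditions entangle several unknowns with their Frobenius twists, so the real difficulty lies in organizing the recursion so that exactly one free parameter ($J$) survives, rather than collapsing to a discrete set or requiring further ad hoc normalizations.
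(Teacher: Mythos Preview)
Your proposal has a genuine gap and also diverges substantially from the paper's route.

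\textbf{The gap.} You impose only the quadratic relation $\Phi_y^2 - \mathrm{Tr}(\zeta)\Phi_x\Phi_y + \zeta^{q+1}\Phi_x^2 - \Phi_x = 0$, but a pair $(\Phi_x,\Phi_y)$ in the noncommutative ring $L\{\tau\}$ defines a Drinfeld $\mathcal{A}$-module only if in addition $\Phi_x\Phi_y = \Phi_y\Phi_x$. Without this second constraint your coefficient system is underdetermined and will admit spurious solutions; with it, you have two degree-eight twisted-polynomial identities rather than one, and your ``top-down recursion'' sketch gives no indication of how these interact or why exactly one free parameter survives. The paper in fact never confronts this system head-on.

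\textbf{The paper's approach.} Rather than expanding the quadratic relation, the paper exploits the ideal structure of $\mathcal{A}$. Since $I_\infty^2=(x)$ and $I_0 I_\infty=(y)$, any normalized rank two $\Phi$ factors as $\Phi_x = \tilde\phi_{I_\infty}\phi_{I_\infty}$ and $\Phi_y = \zeta^q\tilde\phi_{I_0}\phi_{I_\infty}$, where $\phi_{I_\infty},\phi_{I_0},\tilde\phi_{I_\infty},\tilde\phi_{I_0}$ are degree-two annihilators and $\phi_{I_\infty}$ is an isogeny to a companion module $\tilde\phi$ (Lemma~\ref{Lem:phi}). This reduces eight unknown coefficients to six, organized as $\alpha,\beta,\delta,\tilde\alpha,\tilde\beta,\tilde\delta$. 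The motive relation for $\Phi$ (Lemma~\ref{Lem:phixyABC}) is then a single degree-two identity, not degree-eight, and comparing it with the parallel motive relation for $\tilde\phi$ forces $\delta^{q+1}=-T^{\sigma+q}\lambda^{q^2-1}$, which is precisely where $\nu$ enters (Theorem~\ref{Thm:thphi}). The $J$-invariant and the final formulas for $\Phi^J$ then come from a conjugation by $\ell=\lambda^{q/(q-1)}$ after the isomorphism analysis of Lemma~\ref{Lem:Dmiso}. The wedge product $\wedge^2\Phi$ falls out of the same motive computation (it is identified with a specific $\psi^{(\zeta,a)}$ already in Lemma~\ref{Lem:phixyABC}), so no separate $2\times2$ determinant calculation is needed.

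In short: your brute-force coefficient-matching could in principle be made to work once commutativity is added, but the paper's annihilator factorization linearizes the problem and explains structurally why a $(q+1)$-th root appears and why a single parameter remains.
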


\subsection{Weil pairing of Drinfeld modules}
\label{sec1.2}

As a classical theory, the Weil pairing for elliptic curves $ E $ serves
as a perfect bilinear form from the $m$-torsion points ($m \geqslant 1
$) of $E$ to the $m$-th roots $ \mu _{m} $ of unity, formally written as
\begin{equation*}
E[m] \times E[m] \to \mu _{m} .
\end{equation*}
Given a Drinfeld module $ \phi $ over a fixed $A$-field and an ideal
$ I $ of $A $, the group $ \ker \phi _{I} $ equipped with rank $r$
$A$-module structure is the counterpart of torsion point group on elliptic
curves. Interpreting the image of the Weil pairing by
$\ker \psi _{I}$ for some rank one Drinfeld module $\psi $, there shall
be a perfect multilinear map
\begin{equation*}
\operatorname{Weil}_{I} :~ \prod _{i=1}^{r} \ker (\phi _{I} )\to
\ker (\psi _{I}).
\end{equation*}
We provide a brief interpretation for the relationship between Weil pairing
and Galois action. Let $K$ be the fraction field of $A$. Then the set of
torsions $\ker (\phi _{I})$ is an $A$-module, which is naturally equipped
with an action of the absolute Galois group
$ G_{K} = \mathrm{Gal}( K^{\mathrm{sep}}/ K)$. Since the Galois action
commutes with the $A$-action on $ \ker (\phi _{I}) $, we have the representation:
\begin{equation*}
\rho _{\phi}: G_{K} \to \mathrm{Aut}_{A}(\ker (\phi _{I})) \cong
\mathrm{GL}_{r} (A/I) .
\end{equation*}
Composing with the determinant map
$ \det : \mathrm{GL}_{r} (A/I)\to (A/I)^{*} = \mathrm{GL}_{1} (A/I)$, we
get a representation homomorphism
\begin{equation*}
G_{K} \to \mathrm{GL}_{1} (A/I).
\end{equation*}
Such a homomorphism shall be viewed as
\begin{equation*}
\rho _{\psi}: G_{K} \to \mathrm{Aut}_{A}(\ker \psi _{I})
\end{equation*}
for some rank one Drinfeld module $\psi $. In this way, the Weil pairing
shall be compatible with Galois actions.

Hamahata gave an explicit construction of rank two Weil pairing in
\cite{HY93} over the rational function field case. In general, the difficult
part to understand Weil pairing is that in the category of Drinfeld modules
it is not at all obvious how to define tensor products and how to take
subquotients. However, in the equivalent category of Anderson's pure
$t$-motives, the tensor product is closed under the operations of taking
subquotients and tensor products. In this way, van~der~Heiden
\cite{vdHGJ04} guaranteed the existence of Weil pairing. In particular,
he showed that for those Drinfeld module $ \phi $ of the form \text{\eqref{Eq:phioverpoly}}, the rank one Drinfeld module $ \psi $ can be represented
by
\begin{equation*}
\psi _{T} = (-1)^{r-1} g_{r} \tau + \iota (T) ,
\end{equation*}
which coincides with Hamahata's construction. Nevertheless, the Weil pairing
$\operatorname{Weil}_{I} $ can be formally characterized by the following
properties:
\begin{enumerate}
\item The map $\operatorname{Weil}_{I} $ is $A$-multilinear, i.e. it is
$A$-linear in each component.
\item It is alternating: if $\mu _{l} = \mu _{h}$ for $l \neq h$, then
$\operatorname{Weil}_{I}(\mu _{1},\ldots ,\mu _{r}) = 0$.
\item It is surjective and nondegenerate: if
\begin{equation*}
\operatorname{Weil}_{I}(\mu _{1},\ldots ,\mu _{r}) = 0 ~ {\mathrm{for~ all
}}~\mu _{1},\ldots , \mu _{i-1}, \mu _{i+1},\ldots , \mu _{r}\in \ker
\psi _{I},~ {\mathrm{then}} ~\mu _{i} = 0.
\end{equation*}
\item It is Galois invariant:
\begin{equation*}
\sigma \operatorname{Weil}_{I}(\mu _{1},\ldots ,\mu _{r}) =
\operatorname{Weil}_{I}(\sigma \mu _{1},\ldots ,\sigma \mu _{r}) ~{
\mathrm{for~ all}} ~\sigma \in \mathrm{Gal}(\bar K/K).
\end{equation*}
\item It satisfies the following compatibility condition for
$\mu _{1},\ldots , \mu _{r}\in \ker \psi _ {IJ}$:
\begin{equation*}
\psi _{J}\operatorname{Weil}_{IJ}(\mu _{1},\ldots ,\mu _{r}) =
\operatorname{Weil}_{I}(\phi _{J}(\mu _{1}),\ldots ,\phi _{J}(\mu _{r})).
\end{equation*}
\end{enumerate}
Using this alternative definition, J.~Katen \cite{KJ21} provided an explicit,
and elementary proof of the existence of Weil pairing for Drinfeld modules
over rational functional fields, which generalizes the rank two formula
in \cite{vdHGJ04}. More precisely, for the $I$-torsions
$ \mu _{1}, \cdots \mu _{n} $, the Weil pairing is written as
\begin{align*}
\operatorname{Weil}_{I} (\mu _{1},\ldots ,\mu _{r}) & = \left (\sum _{
\substack{{\mathbf i}=(i_{1},\ldots ,i_{r})\\0\leqslant i_{1},\ldots ,i_{r}\leqslant n-1}}a_{
\mathbf i } T_{1}^{i_{1}} T_{2}^{i_{2}} \cdots T_{n}^{i_{n}} \right ) *_{
\phi} (\mu _{1}, \ldots , \mu _{n} )
\\
& :=\sum _{
\substack{{\mathbf i}=(i_{1},\ldots ,i_{r})\\0\leqslant i_{1},\ldots ,i_{r}\leqslant n-1}}a_{
\mathbf i } \mathcal{M}(\phi _{T^{i_{1}}}(\mu _{1}),\ldots ,\phi _{T^{i_{r}}}(
\mu _{r})),
\end{align*}
where $\mathcal{M}$ denotes the Moore determinant, and
$ a_{\mathbf i} \in \mathbb{F}_{q} $ are coefficients depending on
$ I $. Indicated by this, we arrive at a nice formulation of Weil pairing
for rank $r$ Drinfeld modules over arbitrary base fields:
\begin{equation*}
\operatorname{Weil}_{I} (\mu _{1},\ldots ,\mu _{r}) =
\operatorname{WO}_{I}^{(r)} *_{\phi }(\mu _{1},\ldots , \mu _{r}),
\end{equation*}
where $ \operatorname{WO}_{I}^{(r)} $ denotes a non-trivial polynomial
with pairwise symmetric property. In particular, when
$ A = \mathcal{A}$, by constructing the dual basis of
$ \mathcal{A}/I $, we are able to derive an explicit expression of
$ \operatorname{WO}_{I}^{(2)} $, and refer to $ O'_{I} $ or
$ O''_{I} $ in \text{Notation~\ref{No:Wo}}.

This paper is organized as follows. First, in Section~\ref{Sec:rodm}, we
will deal with the explicit formula of rank one Drinfeld modules over
$\mathcal{A}$ and give a detailed discussion of the correspondent motive
structure. Using them, in Section~\ref{Sec:PL}, we compute the period lattices
of the standard models of rank one Drinfeld modules. In Section~\ref{Sec:RTDM}, we construct the modular curve (i.e., the $ J $-line) of
rank two Drinfeld modules. Finally, in Section~\ref{Sec:WP}, we discover
the explicit formula of Weil pairing.

\section{Rank one Drinfeld modules}
\label{Sec:rodm}

\subsection{Preliminaries}
\label{Sub:pre}

We recall briefly here some definitions and basic notions of Drinfeld modules.
Let $X$ be a nonsingular complete curve over a finite field
$\mathbb F_{q}$. Denote by $\infty $ a rational point of $X$, which will
be viewed as the infinity of $X$. Let $ A $ be the Dedekind domain corresponding
to $X\setminus \{\infty \}$. Denote by $d_{\infty} $ the degree of
$\infty $. Let $K $ be the fraction field of $ A $, which is also the function
field of $X$. Denote by $ K_ {\infty }$ the unique complete field with
respect to the $\infty $-valuation $ v_{\infty} $. The completion
$ \mathbb{C}_{\infty} $ of the algebraic closure of $ K_{\infty }$ plays
an analogous role, in the function field setting, of the complex number
field $\mathbb{C}$. Using the valuation $v_{\infty } $ at $\infty $, one
may define the degree of $a \in K $ by
\begin{equation*}
\deg (a) := -d_{\infty}\cdot v_{\infty}(a).
\end{equation*}
For $a \in A$, the degree of $a$ coincides with the cardinality of
$A/ (a) $.

The field $ L $ equipped with an $\mathbb{F}_{q}$-algebraic homomorphism
$ \iota : A \to L $ is called an $ A $-field. The $A$-characteristic of
$ L $ is defined to be
$ \operatorname{char}_{A}(L) = \ker (\iota ) $. If
$ \operatorname{char}_{A}(L) = 0 $, we say that $L$ has generic characteristic
or infinite characteristic in order to avoid confusion with the usual 0
characteristic.

Let $L\{\tau \}$ denote the twist polynomial ring, which is generated by
the $q$-th Frobenius endomorphism $\tau $ such that
$\tau a = a^{q} \tau $. There exists a natural action of
$ L\{\tau \} $ on $ \bar{L} $, given by
\begin{equation*}
\sum _{i=0}^{n} a_{i} \tau ^{i} : \ell \to \sum _{i=0}^{n} a_{i}
\ell ^{q^{i}}.
\end{equation*}

The Drinfeld module over the $ A $-field $ L $ is defined to be a non-trivial
algebraic homomorphism
\begin{equation*}
\phi : A \to L\{\tau \}, \quad a \mapsto \phi _{a},
\end{equation*}
such that the constant term of $ \phi _{a} $ equals $ \iota (a) $, where
the non-trivial restriction means $\phi _{a} \not \equiv \iota (a)$. If
the image of $\phi $ is contained in $ E\{\tau \} $ for some subfield
$E \subseteq L$, we say that $\phi $ is defined over $E$.

\begin{defn}%
\label{Defn:isogeny}
Let $ \lambda $ be a twist polynomial over $L$. Given Drinfeld modules
$ \phi $ and $ \phi ' $, we say that $ \lambda $ is an isogeny from
$ \phi $ to $ \phi ' $, if
\begin{equation*}
\lambda \phi _{a} = \phi '_{a} \lambda
\end{equation*}
holds for $ a \in A $. If $ \lambda $ is a nonzero constant polynomial,
then $ \lambda $ is called an isomorphism. In other words, Drinfeld module
$ \phi $ is isomorphic to its conjugate
$\lambda ^{-1} \phi \lambda $.
\end{defn}
As a not immediate fact, there exists a positive integer $r $ such that
%
\begin{align}
\label{Eq:degphi}
\deg _{\tau}(\phi _{a}) = r \deg (a),
\end{align}
for any $ a\in A $. The integer $ r $ in Equation \text{\eqref{Eq:degphi}} is
called the rank of $ \phi $. Via the Drinfeld module $\phi $, we have the
Drinfeld action of $A$ on $\bar L$, given by
%
\begin{align}
\label{Eq:actiona}
a: ~\ell \to \phi _{a}(\ell ),
\end{align}
for $a\in A$ and $\ell \in \bar L$. The set
$ \ker (\phi _{a}):= \{x\in \bar{L} : \phi _{a}(x) = 0 \}$ constitutes
the $a$-torsion points of $\phi $; it is in fact a submodule of
$\bar{L}$. If $a \in A$ is prime to the characteristic of $L $, then
\begin{equation*}
\ker (\phi _{a}) \cong (A / (a))^{\oplus r}
\end{equation*}
as $A$-modules. In analogy to the elliptic curve setting, the analytic
uniformization theorem establishes an equivalence between the categories
of lattices in $\mathbb{C}_{\infty}$, with morphisms coming from multiplications,
and Drinfeld modules over $ A $-field $ \mathbb{C}_{\infty}$, with isogeny
morphisms. In this way, each Drinfeld module over
$\mathbb{C}_{\infty}$ is associated to an exponential function
$\exp _{\Lambda}$ arising from some lattice $ \Lambda $ of
$\mathbb{C}_{\infty}$.

\begin{notation}%
\label{No:phiItphiI}
Let $I $ be an ideal of $A $. For a Drinfeld module $\phi $ over $L$, the
set
\begin{equation*}
\{\phi _{a}~| ~a\in I\}
\end{equation*}
form a left ideal of $L\{\tau \}$. As this ideal in principle, we denote
by $\phi _{I}$ the unique monic maximal right-divisor. Using this notation,
we have
%
\begin{equation}
\label{Eq:Annihilator}
\ker \phi _{I} = \{ x \in \ker \phi _{a} ~|~ \text{for each }a \in I
\}.
\end{equation}
The twist polynomial $ \phi _{I}$ is called the \textbf{annihilator} of
$ I $.
\end{notation}

\subsection{Domain}
\label{sec2.2}

We introduce the domain here which represents the truncated projective
line. Let $K = \mathbb{F}_{q}(t)$ be the rational function field over finite
field $\mathbb{F}_{q} $. As a basic fact, the places of $ K $ can be written
as
\begin{equation*}
\mathbb{P}_{K} = \{ P_{g} | g
\text{ is an irreducible monic polynomial over $ \mathbb{F}_{q}$}\}
\cup \{ \infty \}.
\end{equation*}

We consider the degree two place $ P_{\pi }$ which corresponds to the irreducible
monic polynomial $ \pi \in \mathbb{F}_{q}[t] $. In the rest of this paper,
we refer to $ P_{\pi }$ the infinity place of $K $, instead of the symbol
$\infty $.

Let $ \zeta \in \mathbb{F}_{q^{2}} $ be a root of $\pi $. Denote by
$\mathrm{Tr}= \mathrm{Tr}_{\mathbb{F}_{q}} :\mathbb{F}_{q^{2}} \to
\mathbb{F}_{q} $ the trace map. Then one may reformulate $ \pi $ as
\begin{equation*}
\pi = (t - \zeta )(t - \zeta ^{q} ) = t^{2} - \mathrm{Tr}(\zeta ) t +
\zeta ^{q+1},
\end{equation*}
where $\zeta ^{q+1}\in \mathbb{F}_{q}^{*}$ and
$\mathrm{Tr}(\zeta ) = \zeta + \zeta ^{q} \in \mathbb{F}_{q}$.

The coordinate domain of $ P_{\pi} $-truncated line is defined by
\begin{equation*}
\mathcal{A} := \{ u \in K ~|~ v_{Q}(u) \geqslant 0
\text{ for $Q \in \mathbb{P}_{K} \setminus \{ P_{\pi} \} $ } \}\, {
\cup \,\{0\}},
\end{equation*}
where $v_{Q}(u)$ denotes the valuation of $u$ at the place $Q$. Let
$\mathbb{F}_{\pi}$ be the residue field of $K$ at $ P_{\pi}$, which is
isomorphic to $ \mathbb{F}_{q}(\zeta ) \cong \mathbb{F}_{q^{2}}$. From
the definition, the fraction field of $ \mathcal{A} $ is identified with
the rational function field $ K = \mathbb{F}_{q}(t)$. For
$k\geqslant 1$, we view $k P_{\pi}$ as a divisor of $K$. From
\cite{S09} the Riemann-Roch space associated to $ k P_{\pi} $ is identical
to the $\mathbb F_{q}$-linear space
\begin{equation*}
\mathbb{L}(k P_{\pi}) := \{ u \in K ~ |~ v_{P_{\pi}}(u)\geqslant -k
\text{ and } v_{Q}(u) \geqslant 0
\text{ for $Q \in \mathbb{P}_{K} \setminus \{ P_{\pi} \} $ } \}\cup \{0
\} .
\end{equation*}
 Then $ \mathcal{A}$ can be expressed as
%
\begin{align}
\label{Eq:A=Lspan}
\mathcal{A}= \cup _{k=1}^{\infty} \mathbb{L}( k P_{\pi}).
\end{align}
Applying Riemann-Roch theorem, we find that for each
$ k \geqslant 1$, $ \mathbb{L}(k P_{\pi})$ is spanned by some monomials
in variables $x:= \frac{1}{\pi} $ and $ y: = \frac{t}{\pi } $. It follows
that $ \mathcal{A} $ is generated by $ x $ and $ y $ as an
$\mathbb{F}_{q}$-algebra. Indeed, by considering
$ x = \frac{1}{\pi } $ and $y= \frac{t}{\pi} $, we obtain the relation
of the form
%
\begin{align}
\label{Eq:rho}
\rho (x,y):=y^{2} - \mathrm{Tr}(\zeta ) x y + \zeta ^{q+1} x^{2} -x = 0
.
\end{align}
Consequently, there exists an algebraic isomorphism
\begin{equation*}
\mathcal{A}\cong \mathbb{F}_{q}[x,y]/ (\rho (x,y)) .
\end{equation*}
It yields that the elements of $ \mathcal{A}$ can be written as
$\frac{F(t)}{ \pi ^{d} }$ for some $ F(t) \in \mathbb{F}_{q}[t]$ with
$\deg (F(t)) \leqslant 2d $. For further study, we consider the two ideals
of $ \mathcal{A}$:
\begin{equation*}
I_{0}: = (x -\zeta ^{-q-1} , y ), \qquad I_{\infty }:= (x, y).
\end{equation*}
One may check that
\begin{equation*}
I_{\infty}^{2} = (x) \quad \text{ and }\quad I_{0} I_{\infty }= (y).
\end{equation*}
It is easy to see that the ideal class
\begin{equation*}
\mathrm{Cl}(\mathcal{A}) :=
\frac{\text{Ideal group of $\mathcal{A}$}}{\text{Principle ideal group of $\mathcal{A}$}}
\end{equation*}
is generated by $ I_{0} $ (or $I_{\infty}$), and therefore,
$ \mathrm{Cl}(\mathcal{A}) \cong \mathbb{Z}_{2} $.

\subsection{Hayes module}
\label{sec2.3}

According to the definition, a Drinfeld module of rank $r $ over an
$\mathcal{A}$-field is equivalent to the pair of twist polynomials
$(\phi _{x}, \phi _{y})$ satisfying that the conditions
\begin{align*}
\deg _{\tau} (\phi _{x}) & = \deg _{\tau}(\phi _{y}) = 2 r,
\\
\phi _{x} \phi _{y} & = \phi _{y} \phi _{x} ,
\end{align*}
and
\begin{equation*}
\phi _{y}\phi _{y} - \mathrm{Tr}(\zeta ) \phi _{x}\phi _{y} = \phi _{x}
-\zeta ^{q+1}\phi _{x} \phi _{x}.
\end{equation*}
For a Drinfeld module $ \phi $, denote by
$ \operatorname{LT}_{\phi} (a) $ the leading term of $ \phi _{a} $. It
is easy to see that
$ \operatorname{LT}_{\phi} (y)/ \operatorname{LT}_{\phi}(x) $ is a solution
of quadratic equation $ \pi (t)= 0 $, which leads to the following definition.
%
\begin{defn}
\label{defn2.3}
Given a Drinfeld module $ \phi $ over $ \mathcal{A}$, we say that
$ \phi $ is a $\zeta $-type (resp. $ \zeta ^{q} $-type) Drinfeld module
if $\operatorname{LT}_{\phi} (y)/ \operatorname{LT}_{\phi}(x) $ equals
$ \zeta $ (resp. $\zeta ^{q} $).
\end{defn}
Now we wish to compute the rank one and rank two Drinfeld modules over
$ \bar K $. We need the function field counterpart of the notion of number-theoretic
sign functions, see \cite[Definition 7.2.1]{Goss96}.
%
\begin{defn}%
\label{defn:sign}
Suppose that $ K_{\infty }$ is a $P_{\pi}$-adic  completion
of $ K $ containing $ \mathbb{F}_{\pi} $ as a subfield. A sign function
on $ K_{\infty}^{*} $ (resp. $K_{\infty}$) is a homomorphism
$ K_{\infty}^{*} \to \mathbb{F}_{\pi}^{*}$ which is the identity on
$ \mathbb{F}_{\pi}^{*} $.
\end{defn}

 Notice that $K_{\infty}$ is unique up to isomorphism. For
later use, we identify $\mathbb F_{\pi} $ with
$\mathbb F_{q}(\zeta )$, and fix the standard completion field
$K_{\infty}$ together with uniformizer $\frac{1}{T}=t-\zeta ^{q} $. So
we recognize that $t-\zeta ^{q}\in K_{\infty}$ has valuation $1$ and
$t-\zeta \in K_{\infty}$ has valuation $0$.

A sign function $\operatorname{Sgn}$ can be easily constructed in the following
fashion. Choose a local uniformizer $ \pi _{*} $ of place
$ P_{\pi} $. Then any element $f \in K_{\infty}^{*} $ admits a unique decomposition
\begin{equation*}
f = \operatorname{Sgn}(f) \pi _{*}^{v_{P_{\pi}}(f)} \langle f
\rangle _{1} ,
\end{equation*}
where $ \langle f \rangle _{1} $ is $1$-unit (i.e.,
$v_{P_{\pi}}(1-\langle f \rangle _{1}) \geqslant 1$), and
$\operatorname{Sgn}(f) \in \mathbb{F}_{\pi} ^{*} $. The function
$ \operatorname{Sgn}: K^{*} \to \mathbb{F}_{\pi}$ sending $ f $ to
$ \operatorname{Sgn}(f) $ is a sign function in the sense of \text{Definition~\ref{defn:sign}}. To be more precise, we choose $ \pi _{*} = \pi $. By identifying
$ \mathbb{F}_{\pi}$ with $ \mathbb{F}_{q}(\zeta ) $, the decompositions
\begin{equation*}
x = 1 \cdot \pi ^{-1} \cdot 1, \qquad y = \zeta ^{q} \cdot \pi ^{-1}
\cdot t/\zeta ^{q}
\end{equation*}
derive the sign function $\operatorname{Sgn}_{\zeta ^{q}}$ with values
\begin{equation*}
\begin{cases}
\operatorname{Sgn}_{\zeta ^{q}}(x) = 1
\\
\operatorname{Sgn}_{\zeta ^{q}}(y) = \zeta ^{q}.
\end{cases}
\end{equation*}
An easy calculation yields that there totally exist $(q^{2}-1)$ sign functions
on $K_{\infty}$, see \cite[Corollary 7.2.4]{Goss96}. Among these functions,
the one above with symbol $ \operatorname{Sgn}_{\zeta ^{q}} $ will be chosen
by default.
%
\begin{defn}
\label{defn2.5}
For a given sign function, e.g. $ \operatorname{Sgn}$, a Hayes module (with
respect to $ \operatorname{Sgn}$) over $ \mathcal{A}$-field is a rank one
Drinfeld module, say $\psi $, such that $ \operatorname{LT}_{\psi} $ is
a twist-sign function. In other worlds, there exists some
$\theta \in \mathrm{Gal}(\mathbb{F}_{\pi}/ \mathbb{F}_{q})$ such that
\begin{equation*}
\operatorname{LT}_{\psi}(a) = \theta \cdot \operatorname{Sgn}(a)
\end{equation*}
for each $a \in \mathcal{A}$.
\end{defn}
In our setting, as
$ \mathrm{Gal}(\mathbb{F}_{\pi} /\mathbb{F}_{q} ) = \{ \mathrm{Id},
\sigma \}$, the non-trivial twist of $ \operatorname{Sgn}_{
\zeta ^{q}} $ is just $ \operatorname{Sgn}_{\zeta}$ defined by identities
\begin{equation*}
\operatorname{Sgn}_{\zeta} (x) = 1 \text{ and } \operatorname{Sgn}_{
\zeta}(y) = \zeta .
\end{equation*}
Consequently, a rank one Drinfeld module $ \psi $ is a Hayes module (with
respect to $\operatorname{Sgn}_{\zeta ^{q}}$) if and only if
$\operatorname{LT}_{\psi }= \operatorname{Sgn}_{\zeta} $ or
$\operatorname{Sgn}_{\zeta ^{q}}$, corresponding to $\zeta $-type or
$\zeta ^{q}$-type.

Before we present the specific results of Drinfeld modules over
$\mathcal{A}$, we review the instructive facts concerning general case
of Drinfeld modules over general $ A $-fields. We refer the
reader to \cite[Proposition 7.2.20 and Chapter 7.4]{Goss96},
\cite[Theorem 7.5.21]{P23} or \cite{PB21} for more details.
%
\begin{thm}
\label{thm2.6}
The set of isomorphism classes of $A$-Drinfeld modules is the principle
homogeneous space of the class group of $A$, while the set of Hayes modules
is the principle homogeneous space of the narrow class group of $A$.
\end{thm}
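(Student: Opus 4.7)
The plan is to exploit the analytic uniformization correspondence alluded to in the preliminaries: a rank one Drinfeld module $\phi$ over $\mathbb{C}_\infty$ is encoded by its period lattice $\Lambda \subset \mathbb{C}_\infty$, which is a discrete projective $A$-module of rank one, and two such modules are isomorphic iff their lattices differ by multiplication by some $c \in \mathbb{C}_\infty^*$. Since rank one projective modules over the Dedekind domain $A$ are classified by fractional-ideal isomorphism, and since the only $A$-linear isomorphisms between rank one $A$-lattices sitting inside $\mathbb{C}_\infty$ are $\mathbb{C}_\infty^*$-scalings, the assignment $\phi \mapsto [\Lambda]$ induces a bijection between isomorphism classes and the ideal class group $\mathrm{Cl}(A)$.

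To upgrade this bijection into a principal homogeneous space structure, I would define an action of the ideal group by setting $I * \phi$ to be the module with lattice $I^{-1}\Lambda$. The algebraic incarnation of this operation is already available in Notation~\ref{No:phiItphiI}: the annihilator $\phi_I$ is an isogeny from $\phi$ to $I * \phi$ whose kernel is the $I$-torsion of $\phi$. One then checks that principal ideals act trivially (since $c \cdot \Lambda$ yields a module isomorphic to $\phi$ via the constant $c$), and that the induced action of $\mathrm{Cl}(A)$ is both transitive (any two rank one lattices are related by an ideal, modulo rescaling) and free (different ideal classes yield non-isomorphic lattices). These verifications reduce to standard Dedekind-domain facts, giving the first half of the theorem.

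For the Hayes refinement, the sign normalization rigidifies which scalings are permitted: a rescaling $\Lambda \mapsto c\Lambda$ modifies the leading term $\operatorname{LT}_\psi(a)$ by a factor involving $\operatorname{Sgn}(c)$, so two Hayes modules with lattices in the same $\mathbb{C}_\infty^*$-orbit are isomorphic \emph{as Hayes modules} only when $c$ lies in the kernel of the sign map (up to an $\mathbb{F}_\pi^*$ Galois twist), i.e. only when $c$ generates a positive principal ideal. The quotient of the fractional ideal group by the subgroup of positive principal ideals is by definition the narrow class group, so the same action $I * \psi$ descends to a simply transitive action of this refined group on the set of Hayes modules.

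The main obstacle will be the compatibility between the algebraic description of the class group action (through the isogeny $\phi_I$) and the analytic one (through lattice division). One has to confirm that $\phi_I$ carries $\phi$ to a Drinfeld module whose analytic lattice is exactly $I^{-1}\Lambda$; this amounts to a degree count together with an identification of $\ker\phi_I$ with $I^{-1}\Lambda / \Lambda$ as an $A/I$-module. Once this compatibility is in place and the positive-scalar subgroup is correctly identified with the kernel of the sign map, both halves of the theorem fall out: the orbit structure is visible from the bijection of the first step, and simple transitivity follows from the injectivity of the (narrow) class-group parametrization. Specializing to $A = \mathcal{A}$, this recovers the fact that $\mathrm{Cl}(\mathcal{A}) \cong \mathbb{Z}_2$ acts simply transitively, with orbits represented by the two standard Hayes modules $\Psi$ and $\Psi^\sigma$ of the two sign types $\zeta^q$ and $\zeta$.
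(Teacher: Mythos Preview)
The paper does not prove this theorem; it is quoted as a background fact with references to Goss, Papanikolas, and others. So there is no in-paper argument to compare against, and your outline is essentially the standard approach found in those references: pass to lattices via analytic uniformization, identify rank one lattices modulo $\mathbb{C}_\infty^*$-scaling with $\mathrm{Cl}(A)$, and for the Hayes refinement restrict the allowed scalings so that the quotient becomes the narrow class group. At the level of a plan this is correct and complete.

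One concrete slip in your final paragraph: in the specialization to $A=\mathcal{A}$ you write that the two standard modules $\Psi$ and $\Psi^\sigma$ are Hayes modules and that $\mathrm{Cl}(\mathcal{A})\cong\mathbb{Z}_2$ acts simply transitively on them. That conflates the two statements of the theorem. The modules $\Psi$ and $\Psi^\sigma$ are the paper's standard representatives of the two \emph{isomorphism classes} (first half of the theorem), but they are not Hayes modules: their leading coefficients are $T^{\sigma-1}$ and $T^{1-\sigma}$, not $1$. The actual Hayes modules are the $\psi^u$ of Definition~\ref{Defn:psiu}, one for each of the $2(q+1)$ roots of $F^+$, and it is the narrow class group $\mathrm{Cl}^+(\mathcal{A})$ of order $2(q+1)$ that acts simply transitively on that larger set (second half of the theorem). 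Your sketch of the Hayes case would also benefit from being more precise about which powers of $c$ appear when you conjugate: rescaling the lattice by $c$ multiplies the $\tau^k$-coefficient of $\psi_a$ by $c^{q^k-1}$, so the condition for the sign normalization to survive is a constraint on $c^{q-1}$ rather than on $\operatorname{Sgn}(c)$ directly.
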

\begin{thm}%
\label{thm:Hilbert}
Maintain the notations $\infty ,\, K $ in Preliminaries \ref{Sub:pre}.
The smallest definition field of a rank one Drinfeld module is the Hilbert
class field $ H$ of $ K $, which is the maximal unramified abelian extension
of $K$ in which $\infty $ splits completely. In addition, for a fixed sign
function $\operatorname{Sgn}$, the definition field of a Hayes module is
the narrow class field $H^{+}$ of $ K $ with respect to
$\operatorname{Sgn}$. The field $ H^{+} $ is totally and tamely ramified
over all infinities of $ H $, and the Galois group
$\mathrm{Gal}(H^{+} / H) $ is isomorphic to
$\mathbb F_{\infty}^{*}/ \mathbb F_{q}^{*} $, where
$ \mathbb F_{\infty}^{*} $ represents the multiplication group of the residue
field at $\infty $.
\end{thm}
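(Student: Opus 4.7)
The plan is to combine the analytic uniformization of Drinfeld modules over $\mathbb{C}_\infty$ with global class field theory, following the template set out by Hayes. First I would invoke the analytic correspondence: every rank one Drinfeld module over $\mathbb{C}_\infty$ arises from an exponential $\exp_\Lambda$ for a rank one $A$-lattice $\Lambda \subset \mathbb{C}_\infty$, and two modules are isomorphic iff the associated lattices differ by a scalar in $\mathbb{C}_\infty^*$. Since a rank one $A$-lattice is, up to scaling, an embedded fractional $A$-ideal, the isomorphism classes over $\mathbb{C}_\infty$ biject with $\mathrm{Cl}(A)$, which recovers the first half of Theorem \ref{thm2.6} and reduces the field-of-definition question to locating the field generated by the coefficients of the canonical models $\phi^{[I]}$ associated with the ideal classes $[I]$.

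To identify that field with the Hilbert class field $H$, I would prove two inclusions. For the upper bound, I would verify that the coefficients of each $\phi^{[I]}_a$ are integral over $A$ and then use Galois equivariance of the lattice-to-module functor to show that $\mathrm{Gal}(\bar K/K)$ acts on the set of isomorphism classes through its quotient $\mathrm{Gal}(H/K) \cong \mathrm{Cl}(A)$ via the Artin reciprocity map; this gives an unramified abelian extension in which $\infty$ splits, hence contained in $H$. For minimality, I would count: there are exactly $|\mathrm{Cl}(A)|$ isomorphism classes over $\bar K$ (by the analytic classification), so any field of definition must carry a representative of each Galois orbit, forcing its degree to be at least $[H:K]$.

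For the Hayes refinement I would rigidify the scalar ambiguity. Specifying a sign function $\operatorname{Sgn}$ pins down the lattice up to multiplication by a unit of sign $1$, so the isomorphism classes of Hayes modules form a torsor over the narrow class group $\mathrm{Cl}^+(A)$ in place of $\mathrm{Cl}(A)$, as claimed in the second half of Theorem \ref{thm2.6}. Class field theory identifies this torsor with the narrow class field $H^+/K$, and the exact sequence
\[
1 \longrightarrow \mathbb{F}_\infty^*/\mathbb{F}_q^* \longrightarrow \mathrm{Cl}^+(A) \longrightarrow \mathrm{Cl}(A) \longrightarrow 1
\]
yields both $[H^+:H] = |\mathbb{F}_\infty^*/\mathbb{F}_q^*|$ and the stated isomorphism $\mathrm{Gal}(H^+/H) \cong \mathbb{F}_\infty^*/\mathbb{F}_q^*$. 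The ramification assertion follows from the local description: above each place of $H$ over $\infty$, the residue field is already $\mathbb{F}_\infty$ (since $\infty$ splits in $H/K$), and $H^+/H$ is locally generated by adjoining $(q^{d_\infty}-1)/(q-1)$-th roots of sign-$1$ uniformizers, producing a totally and tamely ramified extension. A degree count then forces equality everywhere.

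The main obstacle I expect is the bookkeeping in the second paragraph: proving that the Galois action on isomorphism classes of rank one Drinfeld modules is precisely the one dictated by the Artin reciprocity map on $\mathrm{Cl}(A)$. This requires the Drinfeld-module analogue of complex multiplication, including a careful matching of the $A$-action on torsion points with Frobenius at unramified primes of $A$. Once that reciprocity statement is in hand, the sign-normalized version needed for Hayes modules, and the local tame analysis at $\infty$, follow by standard local class field theoretic arguments.
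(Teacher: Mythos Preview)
Your proposal is a reasonable outline of the standard Hayes--Drinfeld complex-multiplication argument, but you should be aware that the paper does not actually prove this theorem. It is stated as background and attributed to the literature: the paper explicitly refers the reader to \cite[Proposition 7.2.20 and Chapter 7.4]{Goss96}, \cite[Theorem 7.5.21]{P23}, and \cite{PB21} for the proof. So there is no ``paper's own proof'' to compare against; Theorem~\ref{thm:Hilbert} functions here as a quoted result that the authors then specialize and verify by hand in their particular setting (the domain $\mathcal{A}$ of class number two) in the paragraphs following Proposition~\ref{Prop:HM}.

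That said, your sketch matches the strategy one finds in those references: analytic uniformization to identify isomorphism classes with $\mathrm{Cl}(A)$, the CM-type reciprocity law to pin down the Galois action and hence the field of definition as $H$, and the sign-rigidification to pass from $\mathrm{Cl}(A)$ to $\mathrm{Cl}^+(A)$ and from $H$ to $H^+$. The one place where your outline is slightly glib is the ``main obstacle'' you flag yourself: the reciprocity statement matching Frobenius with the ideal-class action on Drinfeld modules is genuinely the heart of the matter, and in the references it is established not by a direct torsion-point computation but via Hayes' $*$-action of ideals on Drinfeld modules (the isogeny $\phi \mapsto I*\phi$ induced by the annihilator $\phi_I$) together with reduction modulo primes. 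If you were to write out the proof in full, that is the mechanism you would need, rather than a bare comparison of Frobenius with $A$-action on torsion.
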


For our setting, it is easy to see that the class field $ H$ of
$ \mathcal{A}$ is given by a constant field extension of $ K $, i.e.,
$ H = \mathbb{F}_{q^{2}}(t)= \mathbb{F}_{q}(\zeta ,t) $, whose Galois group
satisfies
\begin{equation*}
\mathrm{Gal}(H / K) \cong \mathrm{Cl}(\mathcal{A})\cong \mathbb{Z}_{2}.
\end{equation*}
Recall the definition of the narrow class group of $ \mathcal{A}$:
\begin{equation*}
\mathrm{Cl}^{+}(\mathcal{A}) :=
\frac{\text{Ideal group of $\mathcal{A}$}}{\text{Principle ideal group of positive elements in $\mathcal{A}$}},
\end{equation*}
where obviously the positive elements mean those with $
\operatorname{Sgn}_{\zeta ^{q}}$-value one. Class field theory indicates
the existence of narrow class field $ H^{+} $ associated with Galois group
$\mathrm{Cl}^{+}(\mathcal{A})$. For the sake of \text{Theorem~\ref{thm:Hilbert}}, when we try to derive a Hayes module over
$ \mathcal{A}$ in an explicit way, it is necessary to compute the narrow
class field $ H^{+} $ simultaneously.
%
\begin{prop}%
\label{Prop:HM}
Let $ \psi $ be a $\zeta $-type Hayes module and set
$ T = \frac{1}{t-\zeta ^{q}}$. Then $ \psi $ is given by
%
\begin{equation}
\label{Eq:psiExp}
\begin{cases}
\psi _{x} = \tau ^{2} - \frac{x+u^{q+1}}{u} \tau + x
\\
\psi _{y} = \zeta \tau ^{2}- \frac{y+u^{q+1}\zeta }{u} \tau +y ,
\end{cases}
%
\end{equation}
where $ u $ is a $(q+1)$-th root of $T^{\sigma +q}$; or equivalently,
\begin{equation*}
u^{q+1} = \frac{1}{(t - \zeta ) (t^{q} - \zeta ) } .
\end{equation*}
\end{prop}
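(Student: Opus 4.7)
The approach is direct computation: since $\mathcal{A} = \mathbb{F}_q[x,y]/(\rho(x,y))$ is generated by two elements with one relation, specifying a Drinfeld module over $\mathcal{A}$ amounts to producing a pair $(\psi_x,\psi_y)$ of twist polynomials satisfying (i) commutativity $\psi_x\psi_y = \psi_y\psi_x$ and (ii) the curve equation $\rho(\psi_x,\psi_y) = 0$. I would parametrize the most general $\zeta$-type rank-one shape and then solve these constraints.

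By the rank-one formula $\deg_\tau(\psi_a) = \deg(a)$ together with $\deg(x) = \deg(y) = 2$, the $\mathcal{A}$-field condition $\psi_a \equiv \iota(a) \pmod{\tau}$, and the $\zeta$-type Hayes specification $\mathrm{LT}_\psi(x) = 1$ and $\mathrm{LT}_\psi(y) = \zeta$, the shape is pinned to
\begin{equation*}
\psi_x = \tau^2 + a\tau + x, \qquad \psi_y = \zeta\tau^2 + b\tau + y,
\end{equation*}
with unknowns $a,b$ over $H = \mathbb{F}_{q^2}(t)$. Using $\tau c = c^q\tau$, I would first compute the commutator $[\psi_x,\psi_y]$, a twist polynomial of $\tau$-degree at most three. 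Its $\tau^3$-coefficient vanishes automatically from the $\zeta$-type leading terms, and its constant term from $xy = yx$ in $\mathcal{A}$. Setting the $\tau$- and $\tau^2$-coefficients to zero yields two linear relations that express $b$ explicitly in terms of $a$ and its Frobenius.

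Next I would expand the curve relation $\psi_y^2 - \mathrm{Tr}(\zeta)\psi_x\psi_y + \zeta^{q+1}\psi_x^2 - \psi_x$, a twist polynomial of $\tau$-degree $4$. Its $\tau^4$-coefficient reduces to the identity $\zeta^2 - \mathrm{Tr}(\zeta)\zeta + \zeta^{q+1} = 0$, and its constant term to $\rho(x,y) = 0$; what remains are three equations from the coefficients of $\tau,\tau^2,\tau^3$. Substituting the expression for $b$ obtained from the commutator step collapses these three equations to a single polynomial identity in $a, a^q, a^{q^2}$. I would then test the ansatz $a = -(x + u^{q+1})/u$ for an auxiliary unknown $u$: this converts the identity into the single condition $u^{q+1}(t - \zeta)(t^q - \zeta) = 1$, equivalently $u^{q+1} = T^{\sigma+q}$. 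Substituting this $u$ back into the commutator expression for $b$ recovers the stated formula \eqref{Eq:psiExp}.

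The main obstacle is the algebraic bookkeeping in the twist-polynomial expansion: the degree-$4$ computation produces terms involving $a^{q^2}, b^{q^2}, x^q, y^q$ over $H$, and one must repeatedly exploit the simplifications $x = T\cdot T^\sigma$, $y = t\cdot x$, $(t-\zeta^q)T = 1$, and $\zeta + \zeta^q = \mathrm{Tr}(\zeta)$ to collapse the intermediate expressions into the clean closed form. A final check is that the $q+1$ possible choices of the root $u$ correspond precisely to the $q+1$ distinct $\zeta$-type Hayes modules predicted by Theorem~\ref{thm2.6} and the narrow class number computation $[\mathbb{F}_\pi^* : \mathbb{F}_q^*] = q+1$, which also shows that any such $u$ lies in (and generates) the narrow Hilbert class field $H^+$ guaranteed by Theorem~\ref{thm:Hilbert}.
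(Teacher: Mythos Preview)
Your approach is correct in spirit but differs from the paper's in a structurally significant way, and it contains one minor inaccuracy.

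The paper does not introduce $u$ as an ansatz. Instead it writes the annihilator of the ideal $I_\infty=(x,y)$ as $\psi_{I_\infty}=\tau-u$; since both $\psi_x$ and $\psi_y$ are right-divisible by $\psi_{I_\infty}$, the formulas $a_1=-(x+u^{q+1})/u$ and $b_1=-(y+\zeta u^{q+1})/u$ drop out immediately by polynomial division in $\bar K\{\tau\}$. This explains \emph{why} the parameter $u$ appears and removes the need to first eliminate $b$ via the commutator. The paper then remarks that commutativity alone forces $u^{q+1}=T^{\sigma+q}$, but instead of carrying out that computation it derives the same constraint by a second annihilator argument, this time for $I_0=(y,\zeta^{-q-1}-x)$: one writes $\psi_{I_0}$ explicitly, observes it must right-divide $\psi_y$, and reads off the relation on $u$. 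This annihilator machinery is what the paper reuses verbatim in Section~\ref{Sec:RTDM} for the rank-two case, so its choice here is not incidental.

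Your brute-force route via $[\psi_x,\psi_y]=0$ and $\rho(\psi_x,\psi_y)=0$ will reach the same answer, but note that the $\tau^3$-coefficient of the commutator is $b^{q^2}-b+a\zeta^q-\zeta a^{q^2}$, which does \emph{not} vanish automatically from the leading terms; it is a genuine third equation alongside the $\tau$- and $\tau^2$-coefficients. In practice the $\tau$-coefficient alone already gives $b=a\,(y^q-y)/(x^q-x)$, and the remaining equations become constraints on $a$. The trade-off: your method is entirely elementary and self-contained, but the ansatz for $a$ is unmotivated and the intermediate algebra is heavier; the paper's annihilator method supplies $u$ conceptually and scales to higher rank.
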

\begin{proof}
Let $ \psi $ be such a Hayes module. In a precise form, we may assume that
\begin{equation*}
\begin{cases}
\psi _{x} = \tau ^{2} + a_{1} \tau + x
\\
\psi _{y} = \zeta \tau ^{2} + b_{1} \tau + y;
\end{cases}
\end{equation*}
with undetermined coefficients $ a_{1} $ and $ b_{1} $. Let
$ \psi _{I_{\infty}} = \tau - u $ be the annihilator of
$ I_{\infty }$. As both $ \psi _{x} $ and $ \psi _{y} $ are right-divisible
by $ \psi _{I_{\infty}}$, we derive
\begin{equation*}
a_{1} = \frac{-x-u^{q+1}}{u}
\end{equation*}
and
\begin{equation*}
b_{1} = \frac{-y-u^{q+1}\zeta}{u}.
\end{equation*}
Thus we obtain the expression of $\psi $ as in \text{\eqref{Eq:psiExp}}, and it
suffices to figure out the restrictions of $ u $. Indeed, the commutativity
$ \psi _{x} \psi _{y} = \psi _{y} \psi _{x} $ requires $ u $ to be a
$(q+1)$-th root of $ T ^{\sigma +q} $. In order to give a generalization
to the rank two case, we adopt another approach as follows.

Note that $ T = \frac{1}{t- \zeta ^{q}} = y - \zeta x $. It follows from
Equation \text{\eqref{Eq:psiExp}} that
%
\begin{align}
\label{Eq:psiyzex}
\psi _{y} + (\zeta ^{-q}- \zeta \psi _{x}) =\zeta ^{-q} -\frac{T}{u} (
\tau - u)= -\frac{T}{u} (\tau - u-\frac{u}{ \zeta ^{q} T} ).
\end{align}
Recalling $ I_{0} = (y, \zeta ^{-q-1}-x )$, we understand that the annihilator
$ \psi _{I_{0}} $ is the right-divisor of both $\psi _{y}$ and
$ \zeta ^{-q}- \zeta \psi _{x} $, see Equation \text{\eqref{Eq:Annihilator}}.
Now Equation \text{\eqref{Eq:psiyzex}} implies that
%
\begin{align}
\label{Eq:psiI0}
\psi _{I_{0}} = \tau - u-\frac{u}{\zeta ^{q} T} .
\end{align}
Again, from the fact that $ \psi _{I_{0}} $ is a right-divisor of
$ \psi _{y} = \zeta (\tau - \frac{y}{\zeta u })(\tau - u ) $, we have the
decomposition
%
\begin{align}
\label{Eq:psiypsiI0}
\psi _{y} = \zeta (\tau -\frac{\zeta ^{q} y}{\zeta t u}) \psi _{I_{0}}.
\end{align}
Then, substituting Equation \text{\eqref{Eq:psiI0}} in Equation \text{\eqref{Eq:psiypsiI0}}, we see that
%
\begin{align}
\label{Eq:psiyI0}
\zeta (\tau -\frac{\zeta ^{q} y}{\zeta t u}) (\tau -u)-\psi _{y} =
\zeta (\tau -\frac{\zeta ^{q} y}{\zeta t u}) \frac{u}{ \zeta ^{q} T}
\end{align}
is right divisible by $\tau -u$. So it yields that
\begin{equation*}
(\tau -\frac{\zeta ^{q} y}{\zeta t u}) \frac{u}{ \zeta ^{q} T} = (
\frac{ u }{\zeta ^{q} T})^{q} ( \tau - u ).
\end{equation*}
Therefore, comparing constant terms of both sides, we have
\begin{equation*}
u^{q+1}=T^{\sigma +q}.
\end{equation*}
One may directly check that
\begin{equation*}
\psi _{y - \mathrm{Tr}(\zeta ) x} \psi _{y} + \psi _ { \zeta ^{q+1} x-1
} \psi _{x} = 0
\end{equation*}
and
\begin{equation*}
\psi _{x} \psi _{y} = \psi _{y} \psi _{x}.
\end{equation*}
This implies that $ \psi $ is indeed a Drinfeld module of rank one.
\end{proof}

As a consequence, the definition field of $ \psi $ is
$ H^{+} = K(\zeta ,u )$ with
\begin{equation*}
u^{q+1} = \frac{1}{(t -\zeta )(t^{q} - \zeta ) } .
\end{equation*}
A direct argument also implies that $ H^{+} $ is the narrow class field
of $ K $, with extension degree
\begin{equation*}
[H^{+}: K] = 2 \cdot (q+1)
\end{equation*}
and
\begin{equation*}
[H^{+}: H] = q+1=[\mathbb F_{q^{2}}^{*}: \mathbb F_{q}^{*}].
\end{equation*}
 All these facts verify \text{Theorem~\ref{thm:Hilbert}}. Essentially,
$ H^{+} $ is the splitting field of the polynomial
\begin{equation*}
F^{+} = X^{2(q+1)} -
\frac{2t^{q+1} - \mathrm{Tr}\zeta \cdot (t^{q}+t) + \mathrm{Tr}(\zeta )^{2} - 2 \zeta ^{q+1} }{\pi ^{q+1} }
X^{q+1} + \frac{1}{\pi ^{q+1} } \in K[X].
\end{equation*}
If $ u_{0} $ is one of roots of $ F^{+} $, then all the roots of
$ F^{+} $ can be expressed as
$ \left \{ \mu _{1} u_{0} , \, \frac{ \mu _{2} x}{u_{0}} \right \}_{
\mu _{1},\,\mu _{2}} $, where both $\mu _{1}$ and $ \mu _{2}$ represent
$(q+1)$-th roots of unity. Therefore, the Galois group of
$ H^{+} / K $ is isomorphic to the narrow class group
$ \mathrm{Cl}^{+}(K)$, represented as
\begin{equation*}
\begin{cases}
\zeta \mapsto \zeta
\\
u \mapsto u \cdot \mu _{1};
\end{cases}
\quad
\begin{cases}
\zeta \mapsto \zeta ^{q}
\\
u \mapsto \frac{x}{u} \cdot \mu _{2}.
\end{cases}
\end{equation*}
%
\begin{defn}%
\label{Defn:psiu}
For a root $u$ of $ F ^{+} $, define the Drinfeld module
$ \psi ^{u} $ by the pair:
\begin{equation*}
\begin{cases}
\psi _{x}^{u} = (\tau - \frac{x}{u})(\tau - u)
\\
\psi _{y}^{u} = (\zeta _{*} \tau - \frac{y}{ u}) (\tau - u ),
\end{cases}
\end{equation*}
where $ \zeta _{*} $ depending on $ u $ is a root of $ \pi $ subjecting
to
%
\begin{equation}
\label{Eq:uzeta}
u^{q+1} = \frac{1}{(t-\zeta _{*}) (t^{q} - \zeta _{*})} .
\end{equation}
In other words, such $\psi ^{u} $ is a Hayes module of $ \zeta _{*}$-type.
\end{defn}
As a consequence, we obtain the following refinement of \text{Proposition~\ref{Prop:HM}}.
%
\begin{prop}%
\label{Prop:hayes}
For the fixed sign function $ \operatorname{Sgn}_{\zeta ^{q}}$,
the set of Hayes modules over $\mathcal{A}$-field
$ \mathbb{C}_{\infty} $ is given as
\begin{equation*}
\{ \psi ^{u} | \text{ $u$ is a zero of $ F^{+} $} \}.
\end{equation*}
The twist polynomial $ \psi _{I_{\infty}} $ gives an isogeny from
$ \psi ^ {u} $ to $ \psi ^{u'} $, with $ u ' = \frac{x}{u} $, and exchanges
the types.
\end{prop}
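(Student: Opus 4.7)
The plan is to establish the two assertions separately: first the parameterization of Hayes modules by zeros of $F^{+}$, and then the isogeny property of $\psi_{I_\infty}$.

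For the parameterization, I would invoke Proposition~\ref{Prop:HM} directly for the $\zeta$-type: it already shows that every such Hayes module equals $\psi^{u}$ for some $u$ with $u^{q+1} = T^{\sigma+q} = 1/((t-\zeta)(t^{q}-\zeta))$. The $\zeta^{q}$-type case is handled by the symmetric argument, i.e., by repeating the proof of Proposition~\ref{Prop:HM} with the roles of $\zeta$ and $\zeta^{q}$ interchanged (equivalently, by applying the non-trivial Galois automorphism in $\mathrm{Gal}(H/K)$ to the coefficients); this yields $\psi^{u}$ with $u^{q+1} = 1/((t-\zeta^{q})(t^{q}-\zeta^{q}))$. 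Together the two values of $u^{q+1}$ are precisely the two roots of the quadratic in $X^{q+1}$ defining $F^{+}$, so the zero set of $F^{+}$ gives all $2(q+1)$ admissible $u$. Since by Theorem~\ref{thm2.6} the set of Hayes modules is a principal homogeneous space under $\mathrm{Cl}^{+}(\mathcal{A})$, which has order $2(q+1)$, this exhausts all Hayes modules.

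For the isogeny statement, recall from the proof of Proposition~\ref{Prop:HM} that $\psi_{I_{\infty}}^{u} = \tau - u$. Setting $u' = x/u$, the factorizations $\psi_{x}^{u} = (\tau - x/u)(\tau - u)$ and $\psi_{x}^{u'} = (\tau - u)(\tau - x/u)$ give
\[
(\tau - u)\,\psi_{x}^{u} = (\tau - u)(\tau - x/u)(\tau - u) = \psi_{x}^{u'}\,(\tau - u),
\]
and the identical manipulation with $\zeta_{*}$ (resp.\ $\zeta_{*}'$) in place on each side settles $\psi_{y}$. Hence $\tau - u$ intertwines $\psi^{u}$ and $\psi^{u'}$, i.e., is an isogeny. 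To see that the types are swapped, compute
\[
(u')^{q+1} = \frac{x^{q+1}}{u^{q+1}} = x^{q+1}(t-\zeta_{*})(t^{q}-\zeta_{*}),
\]
and use $(t-\zeta)^{q} = t^{q}-\zeta^{q}$ to obtain the factorization $\pi^{q+1} = (t-\zeta)(t-\zeta^{q})(t^{q}-\zeta)(t^{q}-\zeta^{q})$, which collapses the expression above to $1/((t-\zeta_{*}')(t^{q}-\zeta_{*}'))$ with $\zeta_{*}'$ the conjugate of $\zeta_{*}$. By Definition~\ref{Defn:psiu}, this is exactly the opposite type.

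The main obstacle is essentially bookkeeping: ensuring that the Galois-symmetric version of Proposition~\ref{Prop:HM} really produces every $\zeta^{q}$-type Hayes module, and matching the coefficients of $F^{+}$ (as a quadratic in $X^{q+1}$) against the norm and trace of the two values $1/((t-\zeta)(t^{q}-\zeta))$ and $1/((t-\zeta^{q})(t^{q}-\zeta^{q}))$. Once these identifications are in place, the remainder of the proof is just the factorization identities and the two-line intertwining computation above.
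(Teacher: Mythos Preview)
Your approach is correct and coincides with what the paper leaves implicit (no proof is given there; the proposition is stated as a consequence of Proposition~\ref{Prop:HM} and the preceding discussion of $F^{+}$ and $\mathrm{Cl}^{+}(\mathcal{A})$). One caution: the intertwining for $\psi_{y}$ is not literally the ``identical manipulation,'' since the left factor of $\psi_{y}^{u}$ is $\zeta_{*}\tau - y/u$ rather than a monic linear term and the target module carries $\zeta_{*}^{q}$ in place of $\zeta_{*}$; it is cleanest to first read off the type swap from the leading terms of $(\tau-u)\psi_{y}^{u} = \tilde{\psi}_{y}(\tau-u)$ (which forces $\operatorname{LT}(\tilde{\psi}_{y})=\zeta_{*}^{q}$), and only then identify the isogeny target $\tilde{\psi}$ with $\psi^{u'}$ via its $I_{\infty}$-annihilator $\tau - x/u$.
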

As the isomorphism class of $ \phi ^{u} $ is given by the conjugate form
$ \ell ^{-1} \phi ^{u} \ell $ for some $ \ell \in \bar{L}$, we can define
the complete family of rank one Drinfeld modules as follows.
%
\begin{defn}%
\label{Defn:psize*}
Let $ \zeta _{*} $ be a root of $ \pi (t) $. For coefficient
$a \in L $, define the Drinfeld module
$\psi ^{(\zeta _{*},a)}$ as follows:
%
\begin{align}
\label{Eq:defpsi}
\begin{cases}
\psi _{x}^{(\zeta _{*}, a)} :=
\frac{t - \zeta _{*}^{q} }{t - \zeta _{*} } \cdot \frac{1}{a^{q+1}}
\tau ^{2} +
\frac{t^{q}+t - \mathrm{Tr}\zeta _{*} }{ (t - \zeta _{*})(t^{q}- \zeta _{*}) }
\frac{1}{a} \tau +x
\\
\psi _{y}^{(\zeta _{*}, a) } :=
\frac{t - \zeta _{*}^{q} }{t - \zeta _{*} } \cdot
\frac{\zeta _{*}}{a^{q+1}} \tau ^{2} +
\frac{t^{q+1} - \zeta _{*}^{q+1} }{ (t - \zeta _{*})(t^{q}- \zeta _{*}) }
\frac{1}{a}\tau + y.
\end{cases}
\end{align}
\end{defn}
Among the family, there is a $\zeta $-type Drinfeld module
$ \Psi := \psi ^{(\zeta , 1)}$ with coefficients in $H$ written as
\begin{equation*}
\begin{cases}
\Psi _{x} =T^{\sigma -1} \tau ^{2} + (T^{\sigma}+T^{q-1+\sigma})\tau +
x
\\
\Psi _{y} = \zeta T^{\sigma -1} \tau ^{2} +(t T^{\sigma}+ \zeta T^{q-1+
\sigma} )\tau +y.
\end{cases}
\end{equation*}
While the $\zeta ^{q} $-type rank one Drinfeld modules are isomorphic to
the twist of $ \Psi $, denoted by $ \Psi ^{\sigma} $. We call $
\Psi $ and $\Psi ^{\sigma}$ the standard models of rank one Drinfeld modules
over $ \mathcal{A}$-fields.

\subsection{Motive structure of Drinfeld modules}
\label{sec2.4}

In light of Anderson's $t$-motives, a Drinfeld module is equivalently represented
by an $\mathcal{A}\otimes _{\mathbb{F}_{q}} K\{ \tau \}$-module. For
simplicity, we adopt the definition below.
%
\begin{defn}
\label{defn2.12}
Let $\bar K$ be an algebraic closure of $K$. For a Drinfeld module
$ \phi $ over $\bar K$, the twist polynomial ring
$\bar{M}(\phi ) = \bar{K}\{\tau \} $ over $ \bar K $ equipping the left
$ \mathcal{A}\otimes _{\mathbb{F}_{q}} \bar{K}\{ \tau \} $-module structure:
\begin{equation*}
a \otimes g(\tau ) \cdot \tau ^{k} = g(\tau ) \cdot \tau ^{k} \cdot
\phi _{a} \quad
\text{ for
$ a\otimes g(\tau ) \in \mathcal{A}\otimes _{\mathbb F_{q}} \bar{K}
\{ \tau \}, ~ \tau ^{k} \in \bar{M}(\phi ) $}
\end{equation*}
is called the motive structure of $ \phi $. Notice that
$ \bar{M}(\phi ) $ is then a projective
$ \mathcal{A}\otimes _{\mathbb{F}_{q}}\bar{K} $-module of rank $r$ with
generators $1_{\phi}$, $\tau $, $\tau ^{2}$, $\cdots $.
\end{defn}

For rank one Drinfeld modules, we are able to calculate the associated
motive structures explicitly.
%
\begin{prop}%
\label{Prop:psimo}
Let $ \tilde{\psi} $ be a rank one Drinfeld module over $ \bar {K} $ whose
motive structure is given by the relation
\begin{equation*}
(-c + y \otimes a - x \otimes b) \cdot 1_{\tilde \psi} = \tau ,
\end{equation*}
where $ a ,~ b, ~c \in \bar K $. Then $\zeta _{*} = b/ a $ becomes a root
of $ \pi (t) = 0 $, and $ a,\, b,\, c $ verify the relations
\begin{equation*}
\frac{a}{c} = t - \zeta _{*}^{q}, \qquad \frac{b}{c} = \zeta _{*}(t -
\zeta _{*}^{q}).
\end{equation*}
Moreover, $\tilde \psi $ is identical to the form
$ \psi ^{(b/a, a)}$ in \text{Definition~\ref{Defn:psize*}}.
\end{prop}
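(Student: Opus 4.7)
The plan is to unpack the motive-theoretic hypothesis into a concrete identity in $\bar K\{\tau\}$ and then match coefficients. Using the motive action $a\otimes g(\tau)\cdot\tau^k = g(\tau)\tau^k\tilde\psi_a$ applied to the cyclic generator $1_{\tilde\psi}=1\in\bar K\{\tau\}$, the hypothesis $(-c + y\otimes a - x\otimes b)\cdot 1_{\tilde\psi} = \tau$ translates into
$$a\,\tilde\psi_y - b\,\tilde\psi_x - c \;=\; \tau$$
in $\bar K\{\tau\}$. I would then expand $\tilde\psi_x = c_2\tau^2 + c_1\tau + \iota(x)$ and $\tilde\psi_y = d_2\tau^2 + d_1\tau + \iota(y)$ (the $\tau$-degrees are both $2$ because $\deg(x)=\deg(y)=2$ and $\tilde\psi$ has rank one), and compare coefficients of $\tau^2$, $\tau^1$, and the constant term to extract $ad_2=bc_2$, $ad_1-bc_1=1$, and $c=a\iota(y)-b\iota(x)$ respectively.

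The first relation identifies $\zeta_*:=b/a$ with the leading-term ratio $\operatorname{LT}_{\tilde\psi}(y)/\operatorname{LT}_{\tilde\psi}(x)$. By Definition~\ref{defn2.3}, or by directly examining the top $\tau^4$-coefficient of the identity $\tilde\psi_y^2-\mathrm{Tr}(\zeta)\tilde\psi_x\tilde\psi_y+\zeta^{q+1}\tilde\psi_x^2=\tilde\psi_x$ (using that commutativity forces $d_2/c_2\in\mathbb{F}_{q^2}$), this ratio is a root of $\pi(t)$, so $\zeta_*\in\{\zeta,\zeta^q\}$. For $a/c$ and $b/c$, I would substitute $\iota(x)=1/\pi$ and $\iota(y)=t/\pi$ into the constant-term relation to obtain $c\pi = at-b = a(t-\zeta_*)$; the factorization $\pi=(t-\zeta_*)(t-\zeta_*^q)$ then collapses this to $a/c=t-\zeta_*^q$ and $b/c=\zeta_* a/c=\zeta_*(t-\zeta_*^q)$.

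For the identification $\tilde\psi = \psi^{(\zeta_*,a)}$ with the Drinfeld module of Definition~\ref{Defn:psize*}, I would rely on the fact that Proposition~\ref{Prop:hayes} together with the conjugation remark preceding Definition~\ref{Defn:psize*} exhibits the complete family of rank one Drinfeld modules of type $\zeta_*$, whence $\tilde\psi = \psi^{(\zeta_*,a')}$ for some $a'\in\bar K^*$. Running the coefficient analysis in reverse on $\psi^{(\zeta_*,a')}$, the key computation being the algebraic identity
$$t^{q+1}-\zeta_*^{q+1}-\zeta_*\bigl(t^q+t-\mathrm{Tr}\,\zeta_*\bigr) \;=\; (t-\zeta_*)(t^q-\zeta_*)$$
which makes the numerator of $\gamma-\zeta_*\beta$ cancel the shared denominator in the $\tau$-coefficients $\beta,\gamma$ of $\psi_x^{(\zeta_*,a')},\psi_y^{(\zeta_*,a')}$, produces the motive equation $\tau = \bigl(-a'/(t-\zeta_*^q) + y\otimes a' - x\otimes a'\zeta_*\bigr)\cdot 1$. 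Matching this with the hypothesis immediately forces $a'=a$.

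I expect the main obstacle to be conceptual rather than computational: one needs to be clear that the Anderson-style equivalence makes a rank one Drinfeld module uniquely determined (as an equality, not only up to isomorphism) by the presentation of $\tau$ as an $\mathcal{A}\otimes_{\mathbb{F}_q}\bar K$-multiple of the cyclic generator $1_{\tilde\psi}$ of its motive. Once this principle is in hand, the three coefficient relations above are enough to pin down $\tilde\psi$, and every remaining verification reduces to the short algebraic identity displayed above together with routine coefficient-matching in $\bar K\{\tau\}$.
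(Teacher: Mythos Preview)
Your approach is correct but organized differently from the paper's. The paper proceeds in the forward direction: it first invokes Proposition~\ref{Prop:hayes} to write $\tilde\psi = \ell^{-1}\psi^u\ell$ for some Hayes module $\psi^u$ and some $\ell\in\bar K$, expresses $\tilde\psi_x,\tilde\psi_y$ explicitly in the auxiliary parameters $(u,\ell,\zeta_*)$, combines the two motive equations for $x$ and $y$ to eliminate $\tau^2$, and then reads off the triple $(a,b,c)$ directly as functions of $(u,\ell,\zeta_*)$; the stated relations and the identification $\tilde\psi=\psi^{(b/a,a)}$ follow immediately because the explicit form is already in hand. You reverse the logic: you extract $ad_2=bc_2$, $ad_1-bc_1=1$, $c=ay-bx$ straight from the hypothesis $a\tilde\psi_y-b\tilde\psi_x=\tau+c$, deduce the relations on $a/c$ and $b/c$ without ever parametrizing $\tilde\psi$, and only then invoke the classification to write $\tilde\psi=\psi^{(\zeta_*,a')}$, running the coefficient computation a second time on the explicit family to force $a'=a$. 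Your route is cleaner for the first conclusion (no auxiliary parameters $u,\ell$), while the paper's route buys the final identification for free since the explicit form is already on the table. Both arguments hinge on the same algebraic identity you display, and both ultimately rest on the completeness of the family established via Proposition~\ref{Prop:hayes}.
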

\begin{proof}
Let $ \psi ^{u} $ be the Hayes module as in \text{Definition~\ref{Defn:psiu}}. From \text{Proposition~\ref{Prop:hayes}}, we can assume that
there exists some $ \ell \in \bar {K} $, such that
\begin{equation*}
\tilde \psi _{f} = \ell ^{-1} \psi _{f}^{u} \ell
\end{equation*}
for any $ f\in \mathcal{A}$. Precisely, we have
\begin{align*}
\begin{cases}
\tilde{\psi}_{x} = \ell ^{q^{2}-1} \tau ^{2} - \ell ^{q-1}
\frac{x+u^{q+1}}{u} \tau +x
\\
\tilde{\psi}_{y} = \ell ^{q^{2}-1}\zeta _{*} \tau ^{2}-\ell ^{q-1}
\frac{y+\zeta _{*} u^{q+1}}{u}\tau + y ,
\end{cases}
\end{align*}
where $ \zeta _{*} $ is a root of $\pi $ satisfying the equality \text{\eqref{Eq:uzeta}}. From the motive structure of $\tilde{\psi}$, the relation
of $ 1_{\tilde\psi } ,~\tau , ~\tau ^{2} $ satisfies
\begin{equation*}
\begin{cases}
(x \otimes 1) \cdot 1_{\tilde{\psi}}= \ell ^{q^{2}-1} \cdot \tau ^{2} -
\ell ^{q-1} \frac{x+u^{q+1}}{u} \cdot \tau + x \cdot 1_{\tilde{\psi}}
\\
(y \otimes 1) \cdot 1_{\tilde{\psi}}= \ell ^{q^{2}-1}\zeta _{*}\cdot
\tau ^{2}-\ell ^{q-1}\frac{y+\zeta _{*} u^{q+1}}{u}\cdot \tau + y
\cdot 1_{\tilde{\psi}} .
\end{cases}
\end{equation*}
Note that we write $ x,y \in A\otimes \bar K $ by abuse of notation, since
$\bar K $ is contained in $ A\otimes \bar K $. Combining these equations,
we have the equality
\begin{equation*}
\frac{u}{\ell ^{q-1}} \cdot 1_{\tilde\psi} - (y \otimes
\frac{u(t-\zeta _{*}^{q})}{\ell ^{q-1}}) \cdot 1_{\tilde\psi} +(x
\otimes \frac{\zeta _{*} u(t-\zeta _{*}^{q})}{\ell ^{q-1}} ) \cdot 1_{
\tilde\psi} = \tau .
\end{equation*}
The proposition follows by the assignments
\begin{equation*}
a:= -\frac{u(t-\zeta _{*}^{q})}{\ell ^{q-1}}, \quad b:=-
\frac{\zeta _{*} u(t-\zeta _{*}^{q})}{\ell ^{q-1}} , \quad c:=-
\frac{u}{\ell ^{q-1}}.\qedhere
\end{equation*}
\end{proof}

\section{Period lattice of standard Drinfeld modules}
\label{Sec:PL}

In complex analysis, one can associate to an elliptic curve $E$ over
$\mathbb{C}$ a rank two lattice $\Lambda \subseteq \mathbb{C}$, from which
we can define the Weierstrass $ \mathcal{P}$-function. This gives rise
an exact sequence
\begin{equation*}
\begin{tikzcd}
0 \ar[r] & \Lambda \ar[r] & \mathbb{C} \ar[r, "\exp _{E}"] &E(
\mathbb{C}) \ar[r]& 0,
\end{tikzcd}
\end{equation*}
where $ \exp _{E} = (\mathcal{P}, \mathcal{P}', 1 )$. The addition rule
on complex-point set $ E(\mathbb{C})$ is then derived by the addition on
$ \mathbb{C} $. In function field arithmetic, a rank two Drinfeld module
plays an analogous role of elliptic curve. Let us fix the
$\mathcal{A}$-field $ \mathbb{C}_{\infty}$, we understand from the last
section that each rank one Drinfeld module over
$ \mathbb{C}_{\infty} $ has the standard model, either $\Psi $ or
$ \Psi ^{\sigma} $. Let us focus on the $\zeta $-type first. The analytic
uniformization theorem yields that $ \Psi $ is described by an exponential
function, denoted by $ \exp _{\Lambda} $, whose kernel $ \Lambda $ reserves
as an $\mathcal{A}$-lattice in $ \mathbb{C}_{\infty} $. As shown in the
following diagram of exact sequences
\begin{equation*}
\begin{tikzcd}
0 \ar[r] & \Lambda \ar[r] \arrow[d, "f"] & \mathbb{C}_{\infty}
\ar[d, "f" ] \ar[r, "\exp _{\Lambda}"] &\mathbb{C}_{\infty}
\ar[d, "\Psi _{f}"] \ar[r]& 0
\\
0 \ar[r] & \Lambda \arrow[r] & \mathbb{C}_{\infty}
\ar[r, "\exp _{\Lambda}"] & \mathbb{C}_{\infty} \ar[r] & 0
\end{tikzcd}
\end{equation*}
for each $f\in \mathcal{A}$, the $f$-multiplication on $ \Lambda $ recovers
the Drinfeld action $ \Psi _{f} $ on $ \mathbb{C}_{\infty} $. Since
$ \operatorname{Rank}(\Lambda )= \operatorname{Rank}(\Psi ) = 1 $, we may
assume that
\begin{equation*}
\Lambda = \xi _{\mathcal{A}} \cdot \mathcal{A}
\end{equation*}
for some quantity $\xi _{\mathcal{A}}\in \mathbb{C}_{\infty}$. The quantity
$\xi _{\mathcal{A}}$ shall be understood to be the analogy of complex number
$ 2 \pi i $, and called Carlitz period (see
\cite[Chapter 3.2]{Goss96}, \cite[Chapter 2.5]{Tha04}). For the rational
function field $ \mathbb{F}_{q}(t)$, Carlitz formulated the period as an
infinite product. Wade \cite{Wade41} showed that the correspondent Carlitz
period is transcendental over $ \mathbb{F}_{q}(t) $. We would like to determine
the period lattice of $ \Psi $ explicitly in the same fashion.

\subsection{Binormial coefficients}
\label{sec3.1}

Let us introduce the binormial coefficients in function field arithmetic.
%
\begin{notation}%
\label{No:Dj}
For $ j \geqslant 0$, the sequence $ \{ D_{j} \} $ of
$ K(\zeta ) = \mathbb{F}_{q}(t,\zeta ) $ is recursively defined as follows:
$D_{0} = 1 $, and
\begin{equation*}
D_{j} = D_{j-1}^{q}
\frac{t-t^{q^{j}}}{(t-\zeta ^{q^{j-1}})^{q^{j}}(t-\zeta ^{q})}.
\end{equation*}
\end{notation}

The first few terms are given by
%
\begin{align}\label{eq18}
D_{1} = \frac{t- t^{q}}{(t - \zeta )^{q} (t- \zeta ^{q})};
D_{2}=
\frac{(t-t^{q})^{q} (t - t^{q^{2}})}{(t - \zeta )^{q^{2}} (t - \zeta ^{q})^{q^{2}+q+1}};
D_{3} =
\frac{(t-t^{q})^{q^{2}} (t-t^{q^{2}})^{q} (t -t^{q^{3}})}{ (t -\zeta )^{2q^{3}} (t - \zeta ^{q})^{q^{3}+q^{2}+q+1}}.
\end{align}
Let $ M_{j}, N_{j} $ be the quantities
\begin{equation*}
\begin{cases}
M_{j} = \lceil \frac{j}{2} \rceil q^{j}
\\
N_{j} = \lfloor \frac{j}{2} \rfloor q^{j} + \frac{q^{j} -1}{q-1}.
\end{cases}
\end{equation*}
Here and in the sequel, we denote by $\lceil - \rceil $ the ceiling and
by $\lfloor - \rfloor $ the floor. Then $D_{j}$ in \text{Notation~\ref{No:Dj}} can be rewritten as
\begin{equation*}
D_{j} =
\frac{(t-t^{q})^{q^{j-1}} (t-t^{q^{2}})^{q^{j-2}} \cdots (t -t^{q^{j}})}{ (t -\zeta )^{M_{j}} (t - \zeta ^{q})^{N_{j}}}.
\end{equation*}
%
\begin{notation}%
\label{No:Lj}
Let $L_{0} = 1 $. For $ j \geqslant 1 $, we define
$ L_{j} \in K(\zeta )$ recursively by the formula:
\begin{equation*}
L_{j} = L_{j-1}
\frac{t-t^{q^{j}}}{ (t - \zeta ^{q^{j+1}})(t-\zeta )^{{q^{j}}-q^{j-1}} (t - \zeta ^{q})^{q^{j-1}}}.
\end{equation*}
\end{notation}

Using the quantities $ M_{j}',~ N_{j}' $:
\begin{equation*}
\begin{cases}
M'_{j} = q^{j}+ \lceil \frac{j}{2} \rceil -1
\\
N'_{j} = \lfloor \frac{j}{2} \rfloor + \frac{q^{j}-1}{q-1} ,
\end{cases}
\end{equation*}
each term $L_{j}$ is given by
%
\begin{equation}
\label{Eq:L-exp}
L_{j} =
\frac{(t-t^{q}) (t-t^{q^{2}}) \cdots (t -t^{q^{j}})}{(t - \zeta )^{M_{j}'} (t - \zeta ^{q})^{N_{j}'}}.
\end{equation}

\begin{notation}
For integer $ k \geqslant 0 $, we adopt the symbols
\begin{equation*}
\begin{cases}
[2k] := T^{q^{2k}} - T
\\
[2k - 1] := T^{q^{2k-1}} - T^{\sigma }.
\end{cases}
\end{equation*}
\end{notation}
It is trivial to check that
%
\begin{align}
\label{Eq:ttq2k}
t - t^{q^{2k}} = \frac{T^{q^{2k}} -T }{T^{q^{2k}+1 }}=
\frac{[2k]} {T^{q^{2k}+1 }}
\end{align}
and
%
\begin{align}
\label{Eq:ttq2k-1}
t - t^{q^{2k-1}} =
\frac{T^{q^{2k-1}}- T^{\sigma}}{T^{q^{2k-1 + \sigma}}}=
\frac{[2k-1]}{T^{q^{2k-1 + \sigma}}}.
\end{align}
With these notations, we can obtain the following lemma.
%
\begin{lem}%
\label{Lem:leLj}
Each term $ L_{j} $ in \text{Notation~\ref{No:Lj}} can be reformulated as
\begin{equation*}
L_{j} = [1][2][3]\cdots [j] T^{(\sigma -1)(q^{j}-1)}.
\end{equation*}
\end{lem}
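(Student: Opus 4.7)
The plan is to prove this by induction on $j$. The base case $j=0$ is immediate: $L_0 = 1$ matches the empty product times $T^{(\sigma-1)(q^0-1)} = T^0 = 1$. For the inductive step, assuming the formula for $L_{j-1}$, I will compute the ratio $L_j/L_{j-1}$ from the recursion in \text{Notation~\ref{No:Lj}} and show that it equals $[j]\,T^{(\sigma-1)(q^j - q^{j-1})}$; multiplying by the inductive hypothesis then telescopes to $[1][2]\cdots[j]\,T^{(\sigma-1)(q^j-1)}$.

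To carry out the computation, I would first translate everything into the variable $T$ using the identities $t - \zeta^q = T^{-1}$ and $t - \zeta = T^{-\sigma}$ (the latter coming from the formal convention that $T^\sigma = 1/(t-\zeta)$). The numerator $t - t^{q^j}$ is already handled: by \text{\eqref{Eq:ttq2k}} and \text{\eqref{Eq:ttq2k-1}}, it equals $[j]/T^{q^j+1}$ when $j$ is even, and $[j]/T^{\sigma + q^j}$ when $j$ is odd. For the denominator factor $t - \zeta^{q^{j+1}}$, I would use that $\zeta \in \mathbb{F}_{q^2}$ forces
\begin{equation*}
\zeta^{q^{j+1}} = \begin{cases} \zeta & \text{if $j$ is odd,} \\ \zeta^q & \text{if $j$ is even,} \end{cases}
\end{equation*}
so this factor becomes either $T^{-\sigma}$ or $T^{-1}$ accordingly. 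The remaining two factors $(t-\zeta)^{q^j - q^{j-1}}$ and $(t-\zeta^q)^{q^{j-1}}$ translate directly to $T^{-\sigma(q^j - q^{j-1})}$ and $T^{-q^{j-1}}$.

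Then I would split into the two parity cases and combine exponents. For $j$ even, the denominator becomes $T^{-1 - \sigma(q^j - q^{j-1}) - q^{j-1}}$, and after combining with the numerator $[j]/T^{q^j+1}$ the exponent of $T$ collapses to $(\sigma-1)(q^j - q^{j-1})$. For $j$ odd, the denominator becomes $T^{-\sigma - \sigma(q^j - q^{j-1}) - q^{j-1}}$ and the numerator $[j]/T^{\sigma + q^j}$; the same simplification appears, again yielding $(\sigma-1)(q^j - q^{j-1})$. Thus in both cases $L_j/L_{j-1} = [j]\,T^{(\sigma-1)(q^j-q^{j-1})}$, and the inductive step closes.

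The main obstacle is purely bookkeeping: managing the formal symbol $\sigma$ as an additive exponent, and keeping the two parity cases aligned so that the identity $\zeta^{q^2} = \zeta$ is applied correctly in the factor $\zeta^{q^{j+1}}$. Nothing deeper than careful exponent arithmetic is needed, and the uniform output of the two cases is the sanity check that the computation has been performed consistently.
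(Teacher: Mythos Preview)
Your proposal is correct. The paper's own proof is organized slightly differently: rather than arguing by induction on the recursion, it starts from the closed form \eqref{Eq:L-exp} for $L_j$ in terms of the auxiliary exponents $M_j', N_j'$, substitutes \eqref{Eq:ttq2k} and \eqref{Eq:ttq2k-1} into the full product at once, and then simplifies the total $T$-exponent. Your inductive version computes the single ratio $L_j/L_{j-1}$ and telescopes, which avoids introducing $M_j', N_j'$ altogether. The underlying ingredients---the parity split via $\zeta^{q^{j+1}}\in\{\zeta,\zeta^q\}$ and the identities \eqref{Eq:ttq2k}, \eqref{Eq:ttq2k-1}---are identical in both arguments, so this is a cosmetic reorganization rather than a genuinely different method; your version is arguably a bit cleaner since the exponent bookkeeping is localized to one step.
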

\begin{proof}
Substituting Equations \text{\eqref{Eq:ttq2k}} and \text{\eqref{Eq:ttq2k-1}} into the
expressions of $L_{2k}$ and $ L_{2k+1}$ in Equation \text{\eqref{Eq:L-exp}} respectively,
we have
\begin{equation*}
\begin{split} L_{2k}&= \frac{[1]}{T^{q+\sigma}}
\frac{[2]}{T^{q^{2}+1}} \cdots \frac{[2k-1]}{T^{q^{2k-1}+\sigma}}
\frac{[2k]}{T^{q^{2k}+1}}T^{ M_{2k}' \sigma + N_{2k}'}
\\
&= [1][2][3]\cdots [2k] T^{ (M_{2k}'- k)\sigma +N_{2k}'- k -
\frac {q^{2k+1}-q}{q-1}}
\end{split}
\end{equation*}
and
\begin{equation*}
\begin{split} L_{2k+1}&= \frac{[1]}{T^{q+\sigma}}
\frac{[2]}{T^{q^{2}+1}} \cdots \frac{[2k]}{T^{q^{2k}+1}}
\frac{[2k+1]}{T^{q^{2k+1}+\sigma}}T^{ M_{2k+1}' \sigma + N_{2k+1}'}
\\
&= [1][2][3]\cdots [2k+1] T^{ (M_{2k+1}'- k-1)\sigma +N_{2k+1}'- k -
\frac {q^{2k+2}-q}{q-1}}.
\end{split}
\end{equation*}
Thus, combining the last two formulas, we have
\begin{align*}
L_{j} =& [1][2][3]\cdots [j] T^{ (M_{j}'- \lceil \frac{j}{2} \rceil )
\sigma +N_{j}'- \lfloor \frac{j}{2} \rfloor -\frac {q^{j+1}-q}{q-1}}
\\
=& [1][2][3]\cdots [j]T^{(\sigma -1)(q^{j}-1)}.\qedhere
\end{align*}
\end{proof}

\subsection{Exponential function and logarithm function}
\label{sec3.2}

It is convenient to identify $\mathbb{C}_{\infty}$ with the completion
$ \mathbb{C}_{\zeta ^{q} } $ of the algebraic closure of
$ \mathbb{F}_{q}(\zeta )((\frac{1}{T})) $ with respect to the valuation
$ v_{\frac{1}{T}} $ located at the zero locus of
$ \frac{1}{T} = t - \zeta ^{q} $. Let
$ f: \mathbb{C}_{\zeta ^{q}} \to \mathbb{C}_{\zeta ^{q}} $ be an
$\mathbb{F}_{q}$-linear function over $ \mathbb{C}_{\zeta ^{q}}$. It follows from \cite[Corollary 13.2.5]{V-S06} that such function is given
by the series
\begin{equation*}
f(X) = a_{0} X + a_{1} X^{q} + a_{2} X^{q^{2}} + \cdots .
\end{equation*}
The first coefficient $a_{0} $ is the derivative of $f$. Let
$ \Lambda $ be the lattice of $ \Psi $ contained in
$ \mathbb{C}_{\zeta ^{q}} $. The exponential function and the logarithm
function associated to $ \Lambda $ can be formally defined as follows.
%
\begin{defn}%
\label{Defn:exponential}
The exponential function
$\exp _{\Lambda} (X): \mathbb{C}_{\zeta ^{q}} \to \mathbb{C}_{\zeta ^{q}}
$ is an $\mathbb F_{q}$-linear function subject to
\begin{equation*}
\begin{cases}
\Psi _{x}\cdot \exp _{\Lambda}(X) = \exp _{\Lambda} \cdot (x X)
\\
\Psi _{y}\cdot \exp _{\Lambda}(X)= \exp _{\Lambda} \cdot (y X),
\end{cases}
\end{equation*}
whose derivative is identically $1$.
\end{defn}

\begin{defn}
\label{defn3.6}
The logarithm function $\log _{\Lambda}(X)$ is the local inverse of the
exponential function, which satisfies
%
\begin{align}
\label{Eq:delog}
\begin{cases}
\log _{\Lambda} (\Psi _{x} (X) ) = x \log _{\Lambda}(X)
\\
\log _{\Lambda} ( \Psi _{y} (X) ) = y \log _{\Lambda}(X).
\end{cases}
\end{align}
\end{defn}

\begin{prop}%
\label{Prop:explog}
Let $ \tau $ be the $q$-th Frobenius map. Using the binormial coefficients
$ D_{j}, ~L_{j} $ in \text{Notations~\ref{No:Dj} and \ref{No:Lj}}, the exponential
and logarithm functions are written as follows:
%
\begin{equation}
\label{Eq:expLambda}
\exp _{\Lambda} (X) = \sum _{j=0}^{\infty }\frac{X^{q^{j}}}{D_{j}} =
\sum _{j=0}^{\infty }\frac{\tau ^{j}}{D_{j}}(X)
\end{equation}
and
%
\begin{align}
\label{Eq:logLambda}
\log _{\Lambda}(X) = \sum _{j=0}^{\infty}
\frac{(-1)^{j} X^{q^{j}}}{L_{j}} = \sum _{j=0}^{\infty }
\frac{(-1)^{j} \tau ^{j} }{L_{j}} (X).
\end{align}
\end{prop}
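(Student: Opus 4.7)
The plan is to verify the two stated series satisfy the defining functional equations of $\exp_\Lambda$ and $\log_\Lambda$, since those equations (together with the derivative-one normalization) determine the series uniquely.

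First I would write $\exp_\Lambda(X) = \sum_{j \geq 0} e_j X^{q^j}$; the derivative-one condition forces $e_0 = 1$. Substituting into $\Psi_x(\exp_\Lambda(X)) = \exp_\Lambda(xX)$ and comparing coefficients of $X^{q^j}$ (using that $\tau$ sends $Y$ to $Y^q$) yields the two-step recursion
\[
(x^{q^j} - x)\,e_j \;=\; T^{\sigma-1}\,e_{j-2}^{q^2} + (T^\sigma + T^{q-1+\sigma})\,e_{j-1}^q \qquad (j \geq 2),
\]
together with $e_1 = (T^\sigma + T^{q-1+\sigma})/(x^q-x)$. Since $x^{q^j}-x$ is nonzero in $\mathbb{C}_{\zeta^q}$, this recursion determines every $e_j$ uniquely from $e_0$. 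Hence any series meeting the $\Psi_x$-equation coincides with the true exponential of $\Lambda$ furnished by the analytic uniformization theorem, and thus automatically satisfies the $\Psi_y$-equation as well.

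It then remains to check that $e_j := 1/D_j$ solves the recursion. Multiplying through by $D_j$ and using the defining relation of Notation~\ref{No:Dj} to express the ratios $D_j/D_{j-1}^q$ and $D_j/D_{j-2}^{q^2}$ as explicit rational functions in $t$ and $\zeta$, one reduces to a single polynomial identity in $t$ and $\zeta$, which is verified by plugging in $T = 1/(t-\zeta^q)$, $T^\sigma = 1/(t-\zeta)$ and $x = 1/[(t-\zeta)(t-\zeta^q)]$. The logarithm case proceeds in parallel: writing $\log_\Lambda = \sum \ell_j \tau^j$ with $\ell_0 = 1$, the functional equation \eqref{Eq:delog} gives an analogous recursion, and the identity $\ell_j = (-1)^j/L_j$ is verified using the defining recursion of $L_j$ from Notation~\ref{No:Lj}. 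Alternatively, one may check directly in the twisted series ring $\mathbb{C}_{\zeta^q}\{\tau\}$ that the composition of the two proposed series is the identity, exploiting Lemma~\ref{Lem:leLj} to simplify $L_j$.

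The main obstacle is purely combinatorial bookkeeping: the interplay of the exponents $q^j$, $q^{j-1}$ and the $\sigma$-twists in the numerators and denominators of $D_j$ and $L_j$ must be tracked carefully when clearing denominators. Once this is done, the required identity collapses to a short, if tedious, polynomial verification in $t$ and $\zeta$.
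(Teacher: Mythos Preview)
Your approach is correct but misses the simplification the paper exploits. You work with $\Psi_x$ alone, whose $\tau$-degree is two, and so obtain a \emph{two-step} recursion for the $e_j$; verifying $e_j=1/D_j$ against that recursion is possible but, as you admit, combinatorially tedious. The paper instead notices that
\[
\Psi_y-\zeta\Psi_x=\tau+T
\]
has $\tau$-degree one. Applying both sides to $\exp_\Lambda$ and comparing coefficients gives the \emph{one-step} recursion
\[
D_j^{*}=(D_{j-1}^{*})^{q}\,(y^{q^{j}}-\zeta x^{q^{j}}-T)
=(D_{j-1}^{*})^{q}\,\frac{t-t^{q^{j}}}{(t-\zeta^{q^{j-1}})^{q^{j}}(t-\zeta^{q})},
\]
which is literally the defining recursion of $D_j$ in Notation~\ref{No:Dj}; no further polynomial identity is needed. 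For $\log_\Lambda$ the paper does something analogous: it writes down both the $\Psi_x$- and $\Psi_y$-functional equations for the coefficients $L_j^{*}$ and takes a suitable linear combination (depending on $\zeta^{q^{j}}$) so that the $L_{j-2}^{*}$ terms cancel, again yielding a one-step recursion that is exactly the defining relation of $L_j$ in Notation~\ref{No:Lj}.

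What your method buys is a self-contained uniqueness argument from $\Psi_x$ alone, so you never have to touch $\Psi_y$; what the paper's method buys is that the recursion obtained is \emph{identical} to the definition of $D_j$ (resp.\ $L_j$), so the verification step disappears entirely. Since the ``tedious polynomial verification'' you anticipate is the only real content of your argument, it is worth knowing that the combination $\Psi_y-\zeta\Psi_x$ eliminates it.
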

\begin{proof}
Let us assume that
\begin{equation*}
\exp _{\Lambda} (X) = \sum _{j=0}^{\infty }
\frac{X^{q^{j}}}{D_{j}^{*}} = \sum _{j=0}^{\infty }
\frac{\tau ^{j}}{D_{j}^{*}}(X)
\end{equation*}
for some $ D^{*}_{j} \in \mathbb{C}_{\zeta ^{q}}$ and $D_{0}^{*}=1$. We
wish to show that $D_{j}^{*} = D_{j} $ for each $ j $. From the expression
of $\Psi $ and the relation
\begin{equation*}
\Psi _{y}- \zeta \Psi _{x} = \tau +T ,
\end{equation*}
we have
%
\begin{align}
(\Psi _{y}- \zeta \Psi _{x})\cdot \exp _{\Lambda}(X) &= \exp _{
\Lambda}(X)^{q} + T \exp _{\Lambda}(X)
\nonumber
\\
& = \sum _{j=1}^{\infty }\frac{X^{q^{j }}}{(D_{j-1}^{*})^{q}} + T
\sum _{j=0}^{\infty }\frac{X^{q^{j}}}{D_{j}^{*}}
\label{Eq:Psiexp1}
.
\end{align}
On the other hand, we obtain from \text{Definition~\ref{Defn:exponential}} that
%
\begin{align}
(\Psi _{y}- \zeta \Psi _{x})\cdot \exp _{\Lambda}(X) &= \exp _{
\Lambda} \cdot (y X) - \zeta \exp _{\Lambda} \cdot (x X)
\nonumber
\\
& = \sum _{j=0}^{\infty }(y^{q^{j}} -\zeta x^{q^{j}})
\frac{X^{q^{j}}}{D_{j}^{*}}
\label{Eq:Psiexp2}
.
\end{align}
Combining Equations \text{\eqref{Eq:Psiexp1}} and \text{\eqref{Eq:Psiexp2}} that
\begin{equation*}
\sum _{j=1}^{\infty }\frac{X^{q^{j }}}{(D_{j-1}^{*})^{q}} + T \sum _{j=0}^{
\infty }\frac{X^{q^{j}}}{D_{j}^{*}} = \sum _{j=0}^{\infty }(y^{q^{j}} -
\zeta x^{q^{j}}) \frac{X^{q^{j}}}{D_{j}^{*}} .
\end{equation*}
Comparing the coefficients of both sides, we find that $ D_{j}^{*} $ satisfies
the recursive relation
\begin{equation*}
\frac{1}{(D_{j-1}^{*})^{q}} =
\frac{y^{q^{j}} - x^{q^{j}} \zeta - T }{D_{j}^{*}}.
\end{equation*}
That is
\begin{equation*}
D_{j}^{*} = (D_{j-1}^{*})^{q} (y^{q^{j}} - x^{q^{j}} \zeta - T)= (D_{j-1}^{*})^{q}
\frac{t-t^{q^{j}}}{(t-\zeta ^{q^{j-1}})^{q^{j}}(t-\zeta ^{q})} ,
\end{equation*}
which coincides with recursive relation of $ D_{j} $. So
$ D_{j} = D^{*}_{j} $.

In order to compute $ \log _{\Lambda }$, we assume that
\begin{align*}
\log _{\Lambda}(X) = \sum _{j=0}^{\infty}
\frac{(-1)^{j} X^{q^{j}}}{L_{j}^{*}} = \sum _{j=0}^{\infty }
\frac{(-1)^{j} \tau ^{j} }{L_{j}^{*}} (X),
\end{align*}
for some $ L^{*}_{j} $ and $L_{0}^{*}=1$. It suffices to show that
$L_{j}^{*} = L_{j} $ for each $ j $.

Let us check $ L_{1}^{*} $ first. Using the expressions of
$\Psi _{x}$ and $ \log _{\Lambda}$, the first formula in Equations \text{\eqref{Eq:delog}} can be rewritten as
\begin{equation*}
\left ( \sum _{j=0}^{\infty} \frac{(-1)^{j} \tau ^{j}}{L_{j}^{*}}
\right ) \cdot \left ( x + \frac{x+T^{\sigma +q}}{T} \tau + T^{
\sigma -1} \tau ^{2} \right ) = x \sum _{j=0}^{\infty}
\frac{(-1)^{j} \tau ^{j}}{L_{j}^{*}} ,
\end{equation*}
i.e.,
%
\begin{align}
\label{Eq:logaddL}
\sum _{j=0}^{\infty }\frac{ (-1)^{j}(x^{q^{j}} - x) }{ L_{j}^{*}}
\tau ^{j} + \sum _{j=0}^{\infty }
\frac{ (-1)^{j} (x^{q^{j}} + T^{(\sigma +q)q^{j}})}{ T^{q^{j}} L_{j}^{*} }
\tau ^{j+1} +\sum _{j=0}^{\infty }
\frac{ (-1)^{j} T^{(\sigma -1)q^{j}} }{ L_{j}^{*} } \tau ^{j+2} = 0.
\end{align}
Since $ L_{0}^{*} = 1 $, the coefficient of $\tau $ in Equation \text{\eqref{Eq:logaddL}} equals
\begin{equation*}
- \frac{ x^{q} - x }{ L_{1}^{*} } \tau +
\frac{ (x + T^{\sigma +q} ) }{ T L_{0}^{*} } \tau = 0.
\end{equation*}
Thus, we have
%
\begin{align}
\label{Eq:L1L0}
 L_{1}^{*} =\frac{T (x^{q}-x)}{x+T^{\sigma +q}} =
\frac{t-t^{q}}{(t^{q}-\zeta ^{q})(t - \zeta ^{q})} .
\end{align}
Now we come to compute $ L_{j}^{*} $ for $j>1$. Equation \text{\eqref{Eq:logaddL}} can be rewritten as
\begin{equation*}
\sum _{j=2}^{\infty }\frac{ (-1)^{j}(x^{q^{j}} - x) }{ L_{j}^{*} }
\tau ^{j} - \sum _{j=2}^{\infty }
\frac{ (-1)^{j} (x^{q^{j-1}} + T^{(\sigma +q)q^{j-1}}) }{ T^{q^{j-1}} L_{j-1}^{*} }
\tau ^{j} +\sum _{j=2}^{\infty }
\frac{ (-1)^{j} T^{(\sigma -1)q^{j-2}} }{ L_{j-2}^{*} } \tau ^{j} = 0 .
\end{equation*}
In the same fashion, we apply the expression of $\Psi _{y}$ to obtain
\begin{equation*}
\begin{aligned}
&\sum _{j=2}^{\infty }\frac{ (-1)^{j}(y^{q^{j}} - y) }{ L_{j}^{*} }
\tau ^{j} - \sum _{j=2}^{\infty }
\frac{ (-1)^{j} (y^{q^{j-1}} + \zeta ^{q^{j-1}}T^{(\sigma +q)q^{j-1}}) }{ T^{q^{j-1}} L_{j-1}^{*} }
\tau ^{j}\\
&\quad  +\sum _{j=2}^{\infty }
\frac{ (-1)^{j} \zeta ^{q^{j-2}} T^{(\sigma -1)q^{j-2}} }{ L_{j-2}^{*} }
\tau ^{j} = 0 .
\end{aligned}
\end{equation*}
Combining the $ \tau ^{j} $-terms of the two equations above, we have
\begin{align*}
\frac{ (y^{q^{j}} - y - \zeta ^{q^{j}} (x^{q^{j}} - x)) }{ L_{j}^{*} }
=
\frac{ (y^{q^{j-1}} + \zeta ^{q^{j-1}}T^{(\sigma +q)q^{j-1}}) - \zeta ^{q^{j}} \cdot (x^{q^{j-1}}+T^{(\sigma +q)q^{j-1}}) }{ T^{q^{j-1}} L_{j-1}^{*} }.
\end{align*}
This formula reduces to
%
\begin{align}
\label{Eq:LjLj-1}
L_{j}^{*} = L_{j-1}^{*}
\frac{t-t^{q^{j}}}{ (t - \zeta ^{q^{j+1}})(t-\zeta )^{{q^{j}}-q^{j-1}} (t - \zeta ^{q})^{q^{j-1}} },
\end{align}
by noticing that
\begin{align*}
y^{q^{j}} - y - \zeta ^{q^{j}} (x^{q^{j}} - x) =
\frac{t-t^{q^{j}}}{(t - \zeta ^{q})^ {q^{j}} (t - \zeta ^{q^{j+1}})},
\end{align*}
and
\begin{align*}
\frac{ (y^{q^{j-1}} + \zeta ^{q^{j-1}}T^{(\sigma +q)q^{j-1}}) - \zeta ^{q^{j}} \cdot (x^{q^{j-1}}+T^{(\sigma +q)q^{j-1}} ) }{ T^{q^{j-1}}}
& = \Big(\frac{t- \zeta}{t- \zeta ^{q}}\Big)^{q^{j} - q^{j-1}}.
\end{align*}
The proof is completed since the definition of $ L_{j} $ is exactly composed
of \text{\eqref{Eq:L1L0}} and \text{\eqref{Eq:LjLj-1}}.
\end{proof}

\subsection{Exponential function of standard lattice}
\label{sec3.3}

Notice that $ \mathcal{A}\subseteq \mathbb{C}_{\zeta ^{q}}$ is a canonical
rank one $\mathcal{A}$-lattice with generator $1_{\mathcal{A}}$. We wish
to describe the correspondent exponential function in this section.
%
\begin{defn}
\label{defn3.8}
Recall $ \mathcal{A}= \bigcup _{k} \mathbb{L}(k P_{\pi}) $ in Equation \text{\eqref{Eq:A=Lspan}}, where $\mathbb{L}(k P_{\pi})$ denotes the Riemann-Roch
space. Let $E_{k} (X)$ be the polynomial
\begin{equation*}
E_{k} (X) := X \prod _{ 0 \not = a \in \mathbb L(k P_{\pi})} (1-a^{-1}X)
.
\end{equation*}
The exponential function associated to the standard lattice
$ \mathcal A $ is given by
\begin{equation*}
E_{\infty} (X) := \lim _{k \to \infty } E_{k}(X) = X \prod _{a \in
\mathcal{A}\setminus \{ 0 \} } (1-a^{-1}X).
\end{equation*}
\end{defn}
This definition is valid since the zero locus of $E_{\infty} (X) $ equals
$\mathcal{A}$.

\begin{notation}
For $r=0,1,2,\dots $, $z\in \mathbb{C}_{\zeta ^{q}}$, define
$ \left [ \frac{z}{r} \right ] $ to be the coefficients of the expression:
%
\begin{align}
\label{Eq:explog}
\exp _{\Lambda} ( z \log _{\Lambda} (X) ) = \sum _{r=0}^{\infty }
\left [ \frac{z}{r} \right ] X^{q^{r}}.
\end{align}
\end{notation}
Applying \text{Proposition~\ref{Prop:explog}} and equating coefficients of
$X^{q^{r}}$ in both sides of \text{\eqref{Eq:explog}}, we have
\begin{equation*}
\left [ \frac{z}{r} \right ] = \sum _{j=0}^{r} (-1)^{r-j}
\frac{z^{q^{j}} }{ D_{j} L_{r-j}^{q^{j}}}.
\end{equation*}
This gives a nice alternative expression of $ \Psi $, namely,
%
\begin{align}
\label{Eq:Psiaexp}
\Psi _{a} = \exp _{\Lambda} ( a \log _{\Lambda} (X) ) = \sum _{j=0}^{
\deg (a)} \left [ \frac{a}{j} \right ] \tau ^{j}
\end{align}
for each $ a\in \mathcal A$. Notice that $D_{0}=L_{0}=1$,
$ D_{1} = L_{1} $, so the first few terms are given as follows:
\begin{align*}
\left [ \frac{z}{0} \right ] &= z;
\\
\left [ \frac{z}{1} \right ] &= - \frac{z}{L_{1} } +
\frac{z^{q}}{ D_{1} } = \frac{z^{q}- z }{ L_{1} };
\\
\left [ \frac{z}{2} \right ] &= \frac{z}{ L_{2} } -
\frac{z^{q}}{ L_{1}^{q+1} } + \frac{z^{q^{2}}}{ D_{2} };
\\
\left [ \frac{z}{3} \right ] &= -\frac{z}{ L_{3} } +
\frac{z^{q}}{D_{1} L_{2}^{q}} -
\frac{z^{q^{2}}}{ D_{2} L_{1}^{q^{2}} } + \frac{z^{q^{3}}}{ D_{3} }.
\end{align*}
The Riemann-Roch space $\mathbb L(k P_{\pi})$ is spanned by the elements
\begin{equation*}
\{ x^{i}, x^{i-1} y \}_{i\leqslant k }.
\end{equation*}
This implies that each $ f \in \mathbb L( k P_{\pi })$ is of degree
$ \leqslant 2k $. Applying Equation \text{\eqref{Eq:Psiaexp}}, we have
\begin{equation*}
\Psi _{f} = \sum _{j=0}^{2 k} \left [ \frac{f}{j} \right ] \tau ^{j} ,
\end{equation*}
and $ \left [ \frac{f}{j} \right ] = 0 $ for each $ j > 2k $. In particular,
$ \left [ \frac{f}{2 k +1 } \right ] = 0 $, so we conclude that
$\mathbb L(k P_{\pi}) $ is contained in the zero locus of the polynomial
$ \left [ \frac{z}{2k +1 } \right ] $. In addition,
\begin{equation*}
\mathrm{dim}_{\mathbb{F}_{q}} \mathbb L(k P_{\pi}) = 2k +1 = \deg
\left (\left [ \frac{z}{2k +1 } \right ]\right ).
\end{equation*}
Therefore, the roots of the polynomial
$ \left [ \frac{z}{2k +1 } \right ] $ are exactly the functions contained
in $\mathbb L( k P_{\pi })$. Thus, we obtain the following result.
%
\begin{prop}%
\label{prop:Ek}
The polynomial $E_{k}(z)$ verifies
\begin{equation*}
E_{k}(z) = - L_{2 k +1 } \cdot \left [ \frac{z}{2 k +1 } \right ] =
\sum _{j=0}^{2k+1} (-1)^{j}
\frac{L_{2 k + 1}}{D_{j} L_{2k+1- j} ^{q^{j}}} z^{q^{j}}.
\end{equation*}
Therefore,
%
\begin{equation}
\label{Eq:E_infz}
E_{\infty} (z) = \lim _{ k\to \infty} \sum _{j=0}^{2k+1} (-1)^{j}
\frac{L_{2 k + 1}}{D_{j} L_{2k+1- j}^{q^{j}}} z^{q^{j}}.
\end{equation}
\end{prop}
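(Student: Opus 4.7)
The plan is to identify $E_k(z)$ with a scalar multiple of $\bigl[\tfrac{z}{2k+1}\bigr]$ as $\mathbb{F}_q$-linear polynomials, and then fix the scalar by comparing one coefficient. The discussion preceding the proposition has already done most of the work by showing that the zero locus of $\bigl[\tfrac{z}{2k+1}\bigr]$ is exactly $\mathbb{L}(kP_\pi)$; what remains is to package this into the explicit formula.

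First I would observe that $E_k(z) = z\prod_{0\neq a\in\mathbb{L}(kP_\pi)}(1-a^{-1}z)$ is $\mathbb{F}_q$-linear, since its root set is an $\mathbb{F}_q$-vector space of order $q^{2k+1}$, and that by construction every root is simple. Equivalently, $E_k(z)$ is a polynomial in $\tau$ of $\tau$-degree $2k+1$. On the other side, the series $\bigl[\tfrac{z}{r}\bigr] = \sum_{j=0}^{r}(-1)^{r-j}\tfrac{z^{q^{j}}}{D_j L_{r-j}^{q^j}}$ shows that $\bigl[\tfrac{z}{2k+1}\bigr]$ is $\mathbb{F}_q$-linear of $z$-degree $q^{2k+1}$ (its leading coefficient $1/D_{2k+1}$ is nonzero). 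From $\Psi_f = \sum_{j=0}^{2k}\bigl[\tfrac{f}{j}\bigr]\tau^j$, every $f\in\mathbb{L}(kP_\pi)$ is a root of $\bigl[\tfrac{z}{2k+1}\bigr]$; since a nonzero $\mathbb{F}_q$-linear polynomial of $z$-degree $q^{2k+1}$ has at most $q^{2k+1}$ roots, the inclusion $\mathbb{L}(kP_\pi)\subseteq \ker\bigl[\tfrac{z}{2k+1}\bigr]$ is equality with all roots simple. Hence $E_k(z)$ and $\bigl[\tfrac{z}{2k+1}\bigr]$ must be proportional.

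To determine the proportionality constant I would compare the coefficient of $z = z^{q^0}$: in $E_k(z)$ it equals $1$ (only the leading factor contributes), while the $j=0$ term in the series for $\bigl[\tfrac{z}{2k+1}\bigr]$ contributes $(-1)^{2k+1}/L_{2k+1} = -1/L_{2k+1}$ since $L_0 = 1$. Therefore $E_k(z) = -L_{2k+1}\cdot\bigl[\tfrac{z}{2k+1}\bigr]$. Substituting the explicit series and using the identity $(-1)\cdot(-1)^{2k+1-j} = (-1)^{j}$ (valid because $2k+1$ is odd) produces the closed-form sum in the proposition. The expression for $E_\infty(z)$ then follows immediately from $E_\infty(z) = \lim_{k\to\infty} E_k(z)$ in Definition \ref{defn3.8}.

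The only nontrivial ingredient is the simple-root statement for $\bigl[\tfrac{z}{2k+1}\bigr]$, but this is forced by the coincidence of the $\mathbb{F}_q$-dimension of $\mathbb{L}(kP_\pi)$ with the $\tau$-degree of $\bigl[\tfrac{z}{2k+1}\bigr]$; everything else is a one-coefficient comparison followed by a sign rearrangement. I would therefore not expect any real technical obstacle beyond careful bookkeeping of signs.
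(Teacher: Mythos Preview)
Your proposal is correct and follows essentially the same approach as the paper: the paper's argument is entirely contained in the discussion immediately preceding the proposition (identifying the zero locus of $\bigl[\tfrac{z}{2k+1}\bigr]$ with $\mathbb{L}(kP_\pi)$ by a dimension count), and the proposition is then stated without further proof. Your write-up simply makes explicit the normalization step---comparing the $z^{q^0}$ coefficients to pin down the scalar $-L_{2k+1}$---which the paper leaves to the reader.
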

%

\subsection{The period lattice of standard Drinfeld modules}
\label{sec3.4}

Now we are in the position to compute the Carlitz period. Notice that the
zero locus of $\exp _{\Lambda}$ is the lattice
$ \Lambda = \xi _{\mathcal{A}}\cdot \mathcal{A}$, we find that
\begin{equation*}
E_{\infty}(X) = \frac{1}{\xi _{\mathcal{A}}}\exp _{\Lambda} ( \xi _{
\mathcal{A}}X).
\end{equation*}
Combining this with the equalities \text{\eqref{Eq:expLambda}} and \text{\eqref{Eq:E_infz}} implies that
\begin{align*}
\sum _{j=0}^{\infty }
\frac{\xi _{\mathcal{A}}^{q^{j}} X^{q^{j}}}{D_{j}} & = \exp _{\Lambda}(
\xi _{\mathcal{A}}X )
\\
& = \xi _{\mathcal{A}}\sum _{j=0}^{\infty} \lim _{k\to \infty}
\frac{L_{2 k + 1}}{(-1)^{j} L_{2k+1- j} ^{q^{j}}}
\frac{X^{q^{j}}}{D_{j}}.
\end{align*}
Therefore, we have for each $ j \geqslant 1 $ that
\begin{equation*}
\xi _{\mathcal{A}}^{q^{j}-1} = \lim _{k\to \infty}
\frac{L_{2 k + 1}}{(-1)^{j} L_{2k+1- j} ^{q^{j}}}.
\end{equation*}
In particular, $\xi _{\mathcal{A}}$ is a $ (q-1)$-th root of
$ \lim \limits _{k\to \infty} \frac{-L_{2 k + 1}}{L_{2k} ^{q}} $. It reduces
to compute the limit in terms of infinite product.
%
\begin{notation}
Let us define the partial products
\begin{equation*}
\alpha _{d}:= \prod _{j=2}^{d} \left (1 - \frac{ [2(j-1)]}{[2j]}
\right )
\end{equation*}
and
\begin{equation*}
\beta _{d} := \prod _{j=1}^{d} \left ( 1 - \frac{ [2j-1]}{[2j+1]}
\right ) .
\end{equation*}
\end{notation}

\begin{lem}%
\label{Lem:albed}
Both $ \alpha _{d} $ and $ \beta _{d} $ converge when $d$ tends to infinity.
In addition, the following equalities hold:
\begin{equation*}
\alpha _{d} =
\frac{[2]^{\frac{q^{2d} - 1}{q^{2}-1}}} {\prod _{j=1}^{d} [2j]}
\quad \text{and} \quad \beta _{d} =
\frac{ [1] [2]^{\frac{q^{2d+1} - q}{q^{2}-1}} } {\prod _{j=0}^{d} [2j+1] }.
\end{equation*}
\end{lem}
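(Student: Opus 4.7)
The plan is to reduce the lemma to a single telescoping identity and then read off both the closed-form formula and the convergence. The crux is the algebraic identity
\[
[2j] - [2(j-1)] \;=\; T^{q^{2j}} - T^{q^{2(j-1)}} \;=\; \bigl(T^{q^{2}} - T\bigr)^{q^{2(j-1)}} \;=\; [2]^{q^{2(j-1)}},
\]
which holds in characteristic $p$ because raising to the $q^{2(j-1)}$-th power is an $\mathbb F_{q}$-linear ring endomorphism. Once this is in hand, each factor of $\alpha_{d}$ simplifies to
\[
1 - \frac{[2(j-1)]}{[2j]} \;=\; \frac{[2]^{q^{2(j-1)}}}{[2j]}.
\]
An analogous computation, noting that the $T^{\sigma}$ summands cancel in $[2j+1]-[2j-1] = T^{q^{2j+1}} - T^{q^{2j-1}}$, yields $1 - [2j-1]/[2j+1] = [2]^{q^{2j-1}}/[2j+1]$.

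Second I would multiply these factors and evaluate the resulting geometric sums. For $\alpha_{d}$ the exponent of $[2]$ in the numerator becomes $\sum_{j=2}^{d} q^{2(j-1)} = (q^{2d} - q^{2})/(q^{2}-1)$; multiplying the numerator and denominator by $[2]$ absorbs the missing $j=1$ term into the denominator product and raises the exponent to $(q^{2d}-1)/(q^{2}-1)$, which is precisely the stated formula. The computation for $\beta_{d}$ is completely parallel, with exponent $\sum_{j=1}^{d} q^{2j-1} = (q^{2d+1}-q)/(q^{2}-1)$, and with a factor $[1] = [2\cdot 0 + 1]$ pulled out of the numerator to make the denominator run from $j=0$ to $j=d$.

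Finally I would verify convergence in the completion $\mathbb C_{\zeta^{q}}$ with respect to the $\frac{1}{T}$-adic valuation $v$. Since $v(T) = -1$ while $v(T^{\sigma}) = 0$ (the element $t-\zeta$ is a unit at $P_{\pi}$, whereas $t-\zeta^{q}$ is the chosen uniformizer), the dominant terms of the defining expressions give $v([2k]) = -q^{2k}$ and $v([2k-1]) = -q^{2k-1}$. Consequently $v\bigl([2(j-1)]/[2j]\bigr) = q^{2j} - q^{2(j-1)} \to \infty$ and likewise $v\bigl([2j-1]/[2j+1]\bigr) \to \infty$ as $j \to \infty$, so each factor tends to $1$ rapidly and the infinite products converge in the non-archimedean topology. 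No serious obstacle is anticipated: the entire argument is driven by the Frobenius-subtraction identity above, with the geometric-series bookkeeping and the valuation estimate being routine.
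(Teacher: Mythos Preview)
Your proposal is correct and follows essentially the same route as the paper: the key Frobenius identity $[2j]-[2(j-1)]=[2]^{q^{2(j-1)}}$ (and its odd-index analogue) is exactly what the paper uses to rewrite each factor, after which the geometric-series bookkeeping and the valuation estimate $v_{1/T}([k])=-q^{k}$ for convergence are identical. The only cosmetic difference is that the paper treats convergence first and then derives the closed forms, and your phrase ``pulled out of the numerator'' for $[1]$ should really read ``multiply numerator and denominator by $[1]$,'' but the mathematics is the same.
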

\begin{proof}
It follows from definition that
\begin{equation*}
v_{\frac{1}{T}}([k])=-q^{k};
\end{equation*}
thus we have
\begin{equation*}
v_{\frac{1}{T}}\Big(\frac{ [2(j-1)]}{[2j]}\Big)=q^{2j}-q^{2(j-1)}
\end{equation*}
and
\begin{equation*}
v_{\frac{1}{T}}\Big(\frac{ [2j-1]}{[2j+1]}\Big)=q^{2j+1}-q^{2j-1}.
\end{equation*}
These valuations guarantee the convergences of $ \alpha _{d} $ and
$\beta _{d} $, respectively. Notice that
\begin{equation*}
[2(k+1)] - [2k] = (T^{q^2} - T)^{q^{2k}} = [2]^{q^{2k}},
\end{equation*}
and
\begin{equation*}
[2k+1] - [2k-1] =( T^{q^2} - T)^{q^{2k-1}} = [2]^{q^{2k-1}}.
\end{equation*}
Then $\alpha _{d}$ can be rewritten as
\begin{equation*}
\alpha _{d}=\frac{[2]^{q^{2}}}{[4]}\frac{[2]^{q^{4}}}{[6]} \cdots
\frac{[2]^{q^{2(d-1)}}}{[2d]} =
\frac{[2]^{\frac{q^{2d} - 1}{q^{2}-1}} } {\prod _{j=1}^{d} [2j]}.
\end{equation*}
Similarly, $\beta _{d}$ can be rewritten as
\begin{equation*}
\beta _{d}=\frac{[2]^{q}}{[3]}\frac{[2]^{q^{2}}}{[5]} \cdots
\frac{[2]^{q^{2d-1}}}{[2d+1]} =
\frac{[1][2]^{\frac{q^{2d+1} - q}{q^{2}-1}} } {\prod _{j=0}^{d} [2j+1]}.\qedhere
\end{equation*}
\end{proof}
%
\begin{thm}%
\label{Thm:xiA}
The Carlitz period $\xi _{\mathcal{A}}$ is given by
\begin{equation*}
\xi _{\mathcal{A}}= \frac{\sqrt[q-1]{-[2]} T^{\sigma -1}}{[1]} \cdot
\prod _{i=1}^{\infty }\left (1 - \frac{[i]}{[i+2]} \right ) ,
\end{equation*}
where $ \sqrt[q-1]{-[2]}$ denotes a $(q-1)$-th root of $-[2]$.
\end{thm}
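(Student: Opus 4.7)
The starting point is the identity
\[
\xi _{\mathcal{A}}^{q-1} = \lim _{k\to \infty}\frac{-L_{2k+1}}{L_{2k}^{q}}
\]
established just above the statement; the task is to evaluate this limit in closed form and then take a $(q-1)$-th root. Substituting the formula $L_{j}=[1][2]\cdots [j]\,T^{(\sigma -1)(q^{j}-1)}$ from Lemma~\ref{Lem:leLj} into $-L_{2k+1}/L_{2k}^{q}$ and cancelling the common factor $\prod _{j=1}^{2k}[j]$ (the $T$-exponent simplifies since $(q^{2k+1}-1)-q(q^{2k}-1)=q-1$) gives
\[
\xi _{\mathcal{A}}^{q-1} = -T^{(q-1)(\sigma -1)}\lim _{k\to \infty}\frac{[2k+1]}{\prod _{j=1}^{2k}[j]^{q-1}}.
\]

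Splitting the product appearing in the theorem by parity of $i$, one recognizes $\prod _{i=1}^{\infty}(1-[i]/[i+2]) = \alpha _{\infty}\beta _{\infty}$, where $\alpha _{\infty}$ and $\beta _{\infty}$ are the limits from Lemma~\ref{Lem:albed}. So the target reduces to showing
\[
\lim _{k\to \infty}\frac{[2k+1]}{\prod _{j=1}^{2k}[j]^{q-1}} = \frac{[2]\,(\alpha _{\infty}\beta _{\infty})^{q-1}}{[1]^{q-1}}.
\]
Multiplying the two explicit formulas supplied by Lemma~\ref{Lem:albed} and using the identity $(q^{2k}-1)+(q^{2k+1}-q)=(q+1)(q^{2k}-1)$, I obtain
\[
(\alpha _{k}\beta _{k})^{q-1} = \frac{[1]^{q-1}\,[2]^{q^{2k}-1}}{\prod _{j=1}^{2k+1}[j]^{q-1}},
\]
which rearranges into
\[
\frac{[2k+1]}{\prod _{j=1}^{2k}[j]^{q-1}} = \frac{[2k+1]^{q}\,(\alpha _{k}\beta _{k})^{q-1}}{[1]^{q-1}\,[2]^{q^{2k}-1}}.
\]

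The only delicate point, which I expect to be the main technical obstacle, is showing that
\[
\lim _{k\to \infty}\frac{[2k+1]^{q}}{[2]^{q^{2k}}} = 1.
\]
This I would prove by writing $[2]^{q^{2k}}=T^{q^{2k+2}}-T^{q^{2k}}$ and $[2k+1]^{q}=T^{q^{2k+2}}-(T^{\sigma})^{q}$, then dividing both numerator and denominator by $T^{q^{2k+2}}$: since $v_{\frac{1}{T}}(T)=-1$ while $v_{\frac{1}{T}}(T^{\sigma})=0$, the two correction terms $(T^{\sigma})^{q}/T^{q^{2k+2}}$ and $T^{q^{2k}}/T^{q^{2k+2}}$ have $\tfrac{1}{T}$-adic valuation $\geqslant q^{2k+2}-q^{2k}\to \infty$, so the ratio tends to $1$. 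Combined with $[2]^{q^{2k}-1}=[2]^{q^{2k}}/[2]$ this yields $\lim _{k}[2k+1]^{q}/[2]^{q^{2k}-1}=[2]$, and plugging everything back produces
\[
\xi _{\mathcal{A}}^{q-1} = -\frac{[2]\,T^{(q-1)(\sigma -1)}}{[1]^{q-1}}(\alpha _{\infty}\beta _{\infty})^{q-1}.
\]
Extracting a $(q-1)$-th root gives the stated formula.
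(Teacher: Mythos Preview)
Your proof is correct and follows essentially the same route as the paper: both use Lemma~\ref{Lem:leLj} to rewrite $-L_{2k+1}/L_{2k}^{q}$ and Lemma~\ref{Lem:albed} to identify the resulting product with $(\alpha_{\infty}\beta_{\infty})^{q-1}$. The only organizational difference is that the paper expresses $L_{2d+1}$ via $\alpha_{d}\beta_{d}$ and $L_{2d}$ via $\alpha_{d}\beta_{d-1}$, obtaining $-L_{2d+1}/L_{2d}^{q}=-[1]^{1-q}[2]\,T^{(q-1)(\sigma-1)}\alpha_{d}^{q-1}\beta_{d-1}^{q}\beta_{d}^{-1}$, so the limit drops out immediately from the convergence of $\alpha_{d}$ and $\beta_{d}$ without your separate check that $[2k+1]^{q}/[2]^{q^{2k}}\to 1$.
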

\begin{proof}
From \text{Lemmas~\ref{Lem:leLj} and \ref{Lem:albed}}, we have
\begin{equation*}
\alpha _{d}\beta _{d}=
\frac{[1][2]^{\frac{q^{2d+1}+q^{2d}-q - 1}{q^{2}-1}}T^{(q^{2d+1}-1)(\sigma -1)}}{L_{2d+1}},
\end{equation*}
and
\begin{equation*}
\alpha _{d}\beta _{d-1}=
\frac{[1][2]^{\frac{q^{2d}+q^{2d-1} -q- 1}{q^{2}-1}}T^{(q^{2d}-1)(\sigma -1)}}{L_{2d}}.
\end{equation*}
Thus, we have
\begin{equation*}
-\frac{L_{2d+1}}{L_{2d}^{q}}=-[1]^{1-q}[2] \cdot T^{(q-1)(\sigma -1)}
\alpha _{d}^{q-1}\beta _{d-1}^{q}\beta _{d}^{-1}.
\end{equation*}
Taking the limit $d\to \infty $, we have
\begin{equation*}
\xi _{\mathcal{A}}^{q-1} = \lim _{d\to \infty } -
\frac{L_{2d+1}}{L_{2d}^{q}}=-[1]^{1-q}[2] \cdot T^{(q-1)(\sigma -1)}
\alpha _{\infty}^{q-1}\beta _{\infty}^{q-1}.
\end{equation*}
Then the lemma follows by taking $(q-1)$-th root.
\end{proof}
In order to represent the lattice $\Lambda ^{\sigma }$ of
$ \Psi ^{\sigma }$, we need to construct the isogeny between the two standard Drinfeld modules.
\begin{lem}\label{Lem:Phiisg}
Let  $ \ell $ denote a $(q-1)$-th root of $ T^{\sigma-1} $. Then the twist polynomial 
\[ \lambda=\ell(\tau+T) = \ell \Psi_{I_\infty} \] 
defines an isogeny from $ \Psi $ to  $ \Psi^{\sigma} $. 
\end{lem}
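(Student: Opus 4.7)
The statement reduces to checking the isogeny identity $\lambda \Psi_a = \Psi^{\sigma}_a \lambda$ for the two generators $a = x, y$ of $\mathcal{A}$.

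To prepare the ground, I would first verify the side claim $\Psi_{I_\infty} = \tau + T$. Using the identities $x = T \cdot T^\sigma = 1/\pi(t)$ and $y = tx = t \cdot T \cdot T^\sigma$ (both immediate from $T = 1/(t-\zeta^q)$ and $T^\sigma = 1/(t-\zeta)$), direct right-division in the twist polynomial ring produces the factorizations
\begin{equation*}
\Psi_x = (T^{\sigma-1}\tau + T^\sigma)(\tau + T), \qquad \Psi_y = (\zeta T^{\sigma-1}\tau + t T^\sigma)(\tau + T).
\end{equation*}
Hence $\tau + T$ is a common monic right divisor of $\Psi_x$ and $\Psi_y$. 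Since $\dim_{\mathbb{F}_q} \mathcal{A}/I_\infty = 1$ forces $\deg_\tau \Psi_{I_\infty} = 1$, this identifies $\Psi_{I_\infty} = \tau + T$, and hence $\lambda = \ell \Psi_{I_\infty}$ as claimed.

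Next I would verify the isogeny identity on the generators. The left side $\lambda \Psi_x = \ell (\tau + T)(T^{\sigma-1}\tau + T^\sigma)(\tau + T)$ expands to a degree-three twist polynomial via the commutation rule $\tau c = c^q \tau$. For the right side, since $\ell$ is a scalar I use $\Psi^{\sigma}_x \ell = \sum_i a_i \ell^{q^i} \tau^i$; the relation $\ell^{q-1} = T^{\sigma-1}$ (equivalently $\ell^q = \ell T^{\sigma-1}$ and $\ell^{q^2} = \ell T^{(\sigma-1)(q+1)}$) lets me pull $\ell$ out on the left, turning the leading coefficient $T^{1-\sigma}$ of $\Psi^{\sigma}_x$ into $T^{(\sigma - 1)q}$, which matches the $\tau^3$-coefficient of $(\tau+T)\Psi_x$. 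Coefficient-by-coefficient comparison of the remaining terms follows from $T \cdot T^\sigma = x$ and $T^2 \cdot T^\sigma = xT$. A parallel computation for $a = y$ additionally requires the characteristic-$p$ identity $(t - \zeta)^q = t^q - \zeta^q$, which combined with $T^\sigma = 1/(t-\zeta)$ gives $(t-\zeta)^q T^{q\sigma} = 1$; this is the key algebraic step that reconciles the cross-terms involving $\zeta$ on the $\Psi_y$ side versus $\zeta^q$ on the $\Psi^{\sigma}_y$ side.

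The main obstacle is purely computational bookkeeping: the twist polynomial ring is noncommutative, so one must track Frobenius exponents $q^i$ and the Galois exponent $\sigma$ carefully through degree-three products. No new conceptual input is needed beyond the explicit forms of $\Psi$ and $\Psi^\sigma$ already displayed and the basic relations on $\mathcal{A}$; once the right-factorization of $\Psi_x$ and $\Psi_y$ through $\tau + T$ is in hand, the remainder is a finite, mechanical verification.
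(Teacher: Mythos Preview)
Your proposal is correct and follows essentially the same route as the paper: reduce to the generators $x,y$, substitute the explicit coefficients of $\Psi$ and $\Psi^{\sigma}$, and compare term by term using $\ell^{q-1}=T^{\sigma-1}$. The paper streamlines the arithmetic slightly by writing $\Psi_x = T^{\sigma-1}(\tau+T)^2$ and $\Psi^{\sigma}_x = T^{1-\sigma}(\tau+T^{\sigma})^2$ and cancelling the common right factor $(\tau+T)$ before comparing, so the identity to check drops from $\tau$-degree three to two; your plan to compare at degree three directly is equally valid, just a line or two longer.
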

\begin{proof}
  According to Definition \ref{Defn:isogeny},   $ \lambda=\ell(\tau+T) $ is an isogeny from  Drinfeld module $ \Psi $ to  $ \Psi^{\sigma} $  if and only if the equality
\begin{align}\label{Eq:isgPhi}
   \lambda \Psi_{a} = \Psi^{\sigma}_{a} \lambda   
\end{align}
holds for  $ a \in \mathcal{A} $. Since $ \mathcal{A}$ is generated by $x$ and $y$, we shall only check  the equality \eqref{Eq:isgPhi} for $a =x $ and $a= y$. 
Substituting the expressions of both $\Psi $ and $\Psi^\sigma$ into \eqref{Eq:isgPhi}, we obtain the following relations:
\begin{equation}\label{Eq:isgPhix}
  \ell(\tau+T) T^{\sigma-1}  (\tau+T)=T^{1-\sigma}  (\tau+T^{\sigma})^2\ell
  \end{equation}
  and
  \begin{equation}\label{Eq:isgPhiy}
  \ell(\tau+T)\zeta T^{\sigma-1}  (\tau+\frac{t}{\zeta}T)=\zeta^q  T^{1-\sigma} (\tau+\frac{t}{\zeta^q}T)(\tau+T^{\sigma})\ell.
\end{equation} 
 By comparing the coefficients on both sides of Equations \eqref{Eq:isgPhix} and \eqref{Eq:isgPhiy} respectively, we derive the following conditions: 
    \[
    \ell^{q^2-1}=T^{(q+1)(\sigma-1)};
    \] 
\begin{align}
\label{Eq:coell}
&\ell\big(T^{q\sigma}+T^{\sigma}\big)=\big(T+T^{(q-1)\sigma+1}\big)\ell^q.
\end{align}
Both conditions above are equivalent to 
\[
	\ell^{q-1}=T^{\sigma-1}.
\] 
This completes the proof.
\end{proof}

The following theorem (see \cite[Corollary 4.9.5]{Goss96} or \cite[Theorem 5.2.11 ]{P23}) establishes a connection between two lattices of isogeny Drinfeld modules.
\begin{thm}\label{Thm:isgdm}
    Let $\phi$ and $\psi$ be two Drinfeld modules over $\mathbb {C}_{\infty}$ associated with lattices $\Lambda_{\phi}$ and $\Lambda_{\psi}$ respectively. Suppose that the twist polynomial $\lambda$ is an isogeny from 
$\phi$ to $\psi$ with nonzero constant term $ \partial(\lambda) $ and $I$ is the ideal in $\mathcal{A}$ annihilated by $\lambda$, i.e., $ u = \ell \phi_I$ for some constant $\ell$. Then we have
\[
	\Lambda_{\psi} =  \Lambda_{\phi} \cdot \partial(\lambda) I^{-1}. 
\]
\end{thm}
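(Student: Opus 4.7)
My plan is to prove the lattice relation by passing through the analytic uniformization and comparing kernels on the two sides. The backbone is to produce the equality of $\mathbb{F}_q$-linear entire functions
\[
\lambda \circ \exp_{\Lambda_\phi} = \exp_{\Lambda_\psi}\!\circ\,(\partial(\lambda)\cdot),
\]
and then read off the relation between $\Lambda_\phi$ and $\Lambda_\psi$ from the kernels.

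First I would invoke the analytic uniformization over $\mathbb{C}_\infty$: each $\phi$ (resp.\ $\psi$) is represented by an exponential function $\exp_{\Lambda_\phi}$ (resp.\ $\exp_{\Lambda_\psi}$) that is $\mathbb{F}_q$-linear, surjective, has derivative $1$ at the origin, kernel $\Lambda_\phi$ (resp.\ $\Lambda_\psi$), and satisfies $\phi_a\circ \exp_{\Lambda_\phi}=\exp_{\Lambda_\phi}\circ (a\cdot)$ for all $a\in\mathcal{A}$, and similarly for $\psi$. The composite $\lambda\circ\exp_{\Lambda_\phi}$ is again a surjective, entire, $\mathbb{F}_q$-linear function from $\mathbb{C}_\infty$ to itself, and the isogeny intertwining $\lambda\phi_a=\psi_a\lambda$ combined with the $\phi$-uniformization shows that it intertwines multiplication by $a\in\mathcal{A}$ with the $\psi$-action. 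By the uniqueness of the exponential function attached to a lattice (once its derivative at $0$ is fixed), one obtains the displayed identity, since the derivative of $\lambda\circ\exp_{\Lambda_\phi}$ at $0$ is precisely $\partial(\lambda)$.

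Next I would compute the kernel of $\lambda\circ\exp_{\Lambda_\phi}$ in two ways. On the analytic side, using the displayed identity, a point $z\in\mathbb{C}_\infty$ lies in this kernel iff $\partial(\lambda)\,z\in\Lambda_\psi$, i.e., iff $z\in\partial(\lambda)^{-1}\Lambda_\psi$. On the algebraic side, $z$ is in the kernel iff $\exp_{\Lambda_\phi}(z)\in\ker\lambda$. Since $\lambda=\ell\,\phi_I$ with $\ell\ne 0$, one has $\ker\lambda=\ker\phi_I$, and by \text{Notation~\ref{No:phiItphiI}} this means $\phi_a(\exp_{\Lambda_\phi}(z))=0$ for every $a\in I$; pushing $\phi_a$ through the exponential via the uniformization, this becomes $az\in\Lambda_\phi$ for every $a\in I$, i.e.\ $zI\subseteq \Lambda_\phi$, which translates into $z\in I^{-1}\Lambda_\phi$. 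Equating the two descriptions yields $\partial(\lambda)^{-1}\Lambda_\psi=I^{-1}\Lambda_\phi$, and multiplying by $\partial(\lambda)$ gives the desired $\Lambda_\psi=\partial(\lambda)\,I^{-1}\Lambda_\phi$.

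The main obstacle I anticipate is the identification $\{z:\exp_{\Lambda_\phi}(z)\in\ker\phi_I\}=I^{-1}\Lambda_\phi$, which requires a clean handling of the fractional ideal $I^{-1}$ inside $\mathbb{C}_\infty$ together with the fact that $\Lambda_\phi$ is a rank one projective $\mathcal{A}$-module, so that the equivalence ``$zI\subseteq \Lambda_\phi\Longleftrightarrow z\in I^{-1}\Lambda_\phi$'' holds at the level of submodules of $\mathbb{C}_\infty$. Once this is set up, the whole argument is essentially a two-line kernel comparison, and the remaining content is purely verifying that $\lambda\circ\exp_{\Lambda_\phi}$ really does intertwine the $\psi$-action (which is a direct consequence of the isogeny condition) so as to be forced into the form $\exp_{\Lambda_\psi}\circ(\partial(\lambda)\cdot)$.
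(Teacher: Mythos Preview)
The paper does not prove this theorem at all: it is quoted as a known result with references to \cite[Corollary 4.9.5]{Goss96} and \cite[Theorem 5.2.11]{P23}, and then immediately applied. So there is no ``paper's own proof'' to compare against.

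Your argument is the standard one (essentially what one finds in Goss): lift the isogeny through the exponentials to get $\lambda\circ\exp_{\Lambda_\phi}=\exp_{\Lambda_\psi}\circ(\partial(\lambda)\cdot)$, then equate kernels. It is correct. Two small remarks. First, your justification of the displayed identity should be phrased as: $f(z):=\lambda(\exp_{\Lambda_\phi}(z/\partial(\lambda)))$ has derivative $1$ at $0$ and satisfies $\psi_a\circ f=f\circ(a\cdot)$, hence $f=\exp_{\Lambda_\psi}$ by the uniqueness of the exponential attached to $\psi$; this is cleaner than invoking ``uniqueness of the exponential attached to a lattice'' directly on $\lambda\circ\exp_{\Lambda_\phi}$. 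Second, your comment that $\Lambda_\phi$ is a \emph{rank one} projective $\mathcal{A}$-module is unnecessary and in fact misplaced: the theorem is stated for Drinfeld modules of arbitrary rank, and the equivalence $zI\subseteq\Lambda_\phi\Longleftrightarrow z\in I^{-1}\Lambda_\phi$ holds for any finitely generated projective module over a Dedekind domain (once you observe that $zI\subseteq\Lambda_\phi$ forces $z$ into $K\cdot\Lambda_\phi$, the rest is the standard identification of $I^{-1}M$ with $\{m\in K\otimes M: Im\subseteq M\}$).
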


Theorem \ref{Thm:isgdm} together with Lemma \ref{Lem:Phiisg} gives rise to the expression of the lattice $\Lambda^{\sigma}$.  In summary, we have the following result.
\begin{thm}
Let $\Psi$ and $\Psi^{\sigma}$ be the two standard models of rank one Drinfeld modules and let $\Lambda$ and $ \Lambda^{\sigma}$ be the associated lattices respectively. Then, we find that $\Lambda=\xi_{\mathcal{A}} \cdot \mathcal{A}$ and  
    $ \Lambda^{\sigma} = T \ell\xi_{\mathcal{A}} \cdot I_{\infty}^{-1}$, where $\xi_{\mathcal{A}} $ is given in Theorem \ref{Thm:xiA}, and  $ \ell $ is a $(q-1)$-th root of $ T^{\sigma-1} $.
\end{thm}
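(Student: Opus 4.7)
The plan is to derive each of the two lattice formulas as a direct corollary of the pieces already assembled: the analytic construction of $\Lambda$ with the explicit generator computed in Theorem \ref{Thm:xiA}, the isogeny $\lambda = \ell(\tau+T)$ produced in Lemma \ref{Lem:Phiisg}, and the general isogeny-lattice correspondence of Theorem \ref{Thm:isgdm}.

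First I would reassemble the claim $\Lambda = \xi_{\mathcal{A}} \cdot \mathcal{A}$. By the analytic uniformization of $\Psi$ recalled at the beginning of Section \ref{Sec:PL}, the lattice $\Lambda$ is a rank-one $\mathcal{A}$-submodule of $\mathbb{C}_{\infty}$; since $\Psi$ is the standard $\zeta$-type Hayes module, the correspondence between isomorphism classes of Drinfeld modules and ideal classes of $\mathcal{A}$ (Theorem \ref{thm2.6}) places $\Lambda$ in the trivial ideal class, so $\Lambda = \xi_{\mathcal{A}} \cdot \mathcal{A}$ for some generator $\xi_{\mathcal{A}}\in \mathbb{C}_{\infty}$. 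The explicit form of this generator is exactly the content of Theorem \ref{Thm:xiA}, so nothing further is required for the first assertion.

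For the second assertion, I would invoke Theorem \ref{Thm:isgdm} applied to the isogeny $\lambda = \ell(\tau+T) = \ell\,\Psi_{I_{\infty}}$ from $\Psi$ to $\Psi^{\sigma}$ furnished by Lemma \ref{Lem:Phiisg}. Two pieces of bookkeeping are needed: (i) the ideal annihilated by $\lambda$ is exactly $I_{\infty}$, since $\lambda$ and $\Psi_{I_{\infty}}$ differ only by the nonzero scalar $\ell$ and therefore share the same kernel; and (ii) the constant term of $\lambda$ is $\partial(\lambda) = \ell T$, read directly from the expansion $\lambda = \ell T + \ell \tau$. Plugging these data into the formula of Theorem \ref{Thm:isgdm} with $\phi = \Psi$ and $\psi = \Psi^{\sigma}$ gives
\[
\Lambda^{\sigma} \;=\; \Lambda \cdot \partial(\lambda)\cdot I_{\infty}^{-1} \;=\; \xi_{\mathcal{A}}\,\mathcal{A}\cdot \ell T \cdot I_{\infty}^{-1} \;=\; T\ell\,\xi_{\mathcal{A}} \cdot I_{\infty}^{-1},
\]
which is the desired equality.

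I do not anticipate a genuine obstacle here, since the two substantive computations — the convergence and closed form of the infinite product defining $\xi_{\mathcal{A}}$, and the verification that $\ell(\tau+T)$ intertwines $\Psi$ with $\Psi^{\sigma}$ — have already been carried out in Theorem \ref{Thm:xiA} and Lemma \ref{Lem:Phiisg}. The only point where care is needed is identifying $\partial(\lambda)$ correctly as the $\tau^{0}$-coefficient of $\lambda$; a quick check that the monic right divisor $\Psi_{I_{\infty}}$ really is $\tau+T$ (i.e. that both $\Psi_{x}$ and $\Psi_{y}$ are right-divisible by $\tau+T$, which is already implicit in Lemma \ref{Lem:Phiisg}) pins this down and rules out any stray factor of $T$ or $\ell$ in the final expression.
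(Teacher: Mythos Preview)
Your proposal is correct and matches the paper's own approach: the paper's proof consists of exactly one sentence saying that Theorem~\ref{Thm:isgdm} together with Lemma~\ref{Lem:Phiisg} yields the expression for $\Lambda^{\sigma}$, with the first assertion $\Lambda = \xi_{\mathcal{A}}\cdot\mathcal{A}$ already treated as established by the setup of Section~\ref{Sec:PL} and Theorem~\ref{Thm:xiA}. Your identification of $\partial(\lambda)=\ell T$ and of $I_{\infty}$ as the annihilated ideal is the entire content of the argument, and you have carried it out correctly.

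One small remark: your appeal to Theorem~\ref{thm2.6} to place $\Lambda$ in the trivial ideal class is a slight overreach, since a principal homogeneous space has no canonical basepoint and that theorem alone does not tell you \emph{which} ideal class $\Psi$ corresponds to. The paper sidesteps this by simply positing $\Lambda = \xi_{\mathcal{A}}\cdot\mathcal{A}$ at the start of Section~\ref{Sec:PL} and then verifying it by the coefficient comparison between $E_{\infty}(X)$ and $\exp_{\Lambda}(\xi_{\mathcal{A}}X)$ in Section~\ref{sec3.4}; this comparison is what actually confirms the lattice is a scalar multiple of $\mathcal{A}$. This is not a gap in your argument so much as a point where the real justification lies in the computations already done rather than in the abstract bijection.
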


\section{Rank two Drinfeld modules}
\label{Sec:RTDM}

In this section we compute the complete family of Drinfeld modules of rank
two. For simplicity, we focus our attention on the case of normalized
$\zeta ^{q} $-type, i.e., the rank two Drinfeld module $ \phi $ verifying
$ \operatorname{LT}_{\phi}(a) = \operatorname{Sgn}_{
\zeta ^{q}}(a) $ for any $ a\in \mathcal{A}$.

\subsection{Rank two normalized Drinfeld modules}
\label{sec4.1}

In this section, we only consider Drinfeld modules over
$\mathcal{A}$-field $ \bar{K } $.
%
\begin{notation}%
\label{No:S4phiItphiI}
Recall $I_{0}$ and $I_{\infty}$ defined in Section~\ref{Sec:rodm}. Denote
by $ \phi _{I_{0}}$ and $ \phi _{I_{\infty}}$ the annihilators of
$I_{0}$ and $I_{\infty}$, respectively. The twist polynomial
$ \phi _{I_{\infty}}$ induces isogeny from $ \phi $ to another one
$ \tilde \phi $. Analogously, we assume that $\tilde \phi _{I_{0}}$ and
$\tilde \phi _{I_{\infty}}$ are the annihilators of $\tilde I_{0}$ and
$\tilde I_{\infty}$ under $ \tilde\phi $-action, respectively.
\end{notation}

As we are considering the rank two Drinfeld modules, the constructed annihilators
$ \phi _{I_{0}}$, $ \phi _{I_{\infty}}$, $\tilde \phi _{I_{0}}$ and
$\tilde \phi _{I_{\infty}}$ are monic twist polynomials of degree two.
We emphasize that these twist polynomials enable us to recover both
$ \phi $ and $ \tilde\phi $.

\begin{lem}
\label{Lem:phi}
An arbitrary rank two Drinfeld module $ \phi $ over $ \mathcal{A}$-field
$ \bar{K} $ verifying $ \operatorname{LT}_{\phi}(a) =
\operatorname{Sgn}_{\zeta ^{q}}(a) $ for any $a \in \mathcal{A}$ can be
represented by
%
\begin{align}
\label{Eq:lephi}
\begin{cases}
\phi _{x} = \tilde\phi _{I_{\infty}} \phi _{I_{\infty}}
\\
\phi _{y} = \zeta ^{q} \tilde\phi _{I_{0}} \phi _{I_{\infty}}.
\end{cases}
\end{align}
Moreover, $ \tilde{\phi}_{I_{\infty}} $ induces an inverse isogeny from
$ \tilde{\phi}$ to $ \phi $, and $ \tilde{\phi}$ is of the form
%
\begin{align}
\label{Eq:letphi}
\begin{cases}
\tilde\phi _{x} = \phi _{I_{\infty}} \tilde\phi _{I_{\infty}}
\\
\tilde\phi _{y} = \zeta ^{q} \phi _{I_{0}} \tilde \phi _{I_{\infty}}.
\end{cases}
\end{align}
\end{lem}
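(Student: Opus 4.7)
The plan is to identify both sides of \eqref{Eq:lephi} as the unique right-multiple of $\phi_{I_\infty}$ (with prescribed leading term) whose kernel is a specified $\mathcal{A}$-submodule of $\bar K$. Since $x,y\in I_\infty$, the definition of the annihilator in Notation~\ref{No:phiItphiI} forces $\phi_{I_\infty}$ to right-divide both $\phi_x$ and $\phi_y$ in the Ore domain $\bar K\{\tau\}$, so one writes $\phi_x = A\,\phi_{I_\infty}$ and $\phi_y = B\,\phi_{I_\infty}$. Comparing $\tau$-degrees via Equation~\eqref{Eq:degphi} gives $\deg_\tau A = \deg_\tau B = 2$, while the assumption $\operatorname{LT}_{\phi}(x)=1$ and $\operatorname{LT}_{\phi}(y)=\zeta^q$, combined with the monicity of $\phi_{I_\infty}$, forces $A$ to be monic and $B = \zeta^q B_0$ with $B_0$ monic of $\tau$-degree $2$.

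The key computation is the pushforward identity
\[
\phi_{I_\infty}\bigl(\ker\phi_{J I_\infty}\bigr) \;=\; \ker\tilde\phi_{J}
\]
for any ideal $J\subseteq\mathcal{A}$. This follows from the isogeny relation $\phi_{I_\infty}\phi_a=\tilde\phi_a\phi_{I_\infty}$: for $z\in\bar K$, the image $\phi_{I_\infty}(z)$ is annihilated by every $\tilde\phi_a$ with $a\in J$ precisely when $\phi_a(z)\in\ker\phi_{I_\infty}=\ker\phi_{I_\infty}$ for every such $a$, which by \eqref{Eq:Annihilator} is equivalent to $z\in\ker\phi_{J I_\infty}$. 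Surjectivity of $\phi_{I_\infty}$ (which has nonzero constant term in generic characteristic) handles the reverse inclusion. Applying this with $J=I_\infty$ and $J=I_0$, and using $I_\infty^2=(x)$ and $I_0 I_\infty=(y)$, yields $\ker A=\ker\tilde\phi_{I_\infty}$ and $\ker B_0=\ker\tilde\phi_{I_0}$. Since a monic twist polynomial is determined by its $\tau$-degree together with its kernel, we conclude $A=\tilde\phi_{I_\infty}$ and $B_0=\tilde\phi_{I_0}$, which establishes \eqref{Eq:lephi}.

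For \eqref{Eq:letphi} I would first verify that $\tilde\phi_{I_\infty}$ itself realizes an isogeny from $\tilde\phi$ back to $\phi$. Starting from $\tilde\phi_{I_\infty}\phi_{I_\infty}=\phi_x$, multiplying on the right by $\phi_a$, and rewriting using $\phi_{I_\infty}\phi_a=\tilde\phi_a\phi_{I_\infty}$ together with $\phi_x\phi_a=\phi_a\phi_x$, one obtains
\[
\tilde\phi_{I_\infty}\tilde\phi_a\phi_{I_\infty} \;=\; \phi_a\tilde\phi_{I_\infty}\phi_{I_\infty}.
\]
Right-cancellation of the nonzero element $\phi_{I_\infty}$ in the Ore domain $\bar K\{\tau\}$ yields the isogeny condition $\tilde\phi_{I_\infty}\tilde\phi_a=\phi_a\tilde\phi_{I_\infty}$. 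The factorization \eqref{Eq:letphi} then follows by applying the first half of the argument with the roles of $\phi$ and $\tilde\phi$ interchanged; under this swap the target annihilators of $I_\infty$ and $I_0$ are precisely $\phi_{I_\infty}$ and $\phi_{I_0}$.

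The main obstacle I anticipate is the careful bookkeeping in the pushforward identity and in showing that a monic twist polynomial of a given degree is uniquely determined by its kernel; both rely on the fact that every relevant annihilator is separable in generic characteristic, so its kernel has the expected $\mathbb{F}_q$-dimension. Once these structural facts are in place, both factorizations and the reverse isogeny fall out formally from Ore-domain cancellation.
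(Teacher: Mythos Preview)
Your argument is correct and takes a genuinely different route from the paper's. Both proofs begin identically (write $\phi_x=A\,\phi_{I_\infty}$, $\phi_y=\zeta^q B_0\,\phi_{I_\infty}$ with $A,B_0$ monic of $\tau$-degree $2$) and end identically (the reverse isogeny via right-cancellation of $\phi_{I_\infty}$, then swap roles). The divergence is in identifying $A$ and $B_0$. The paper works purely in the Ore ring: it shows $A$ right-divides $\tilde\phi_x$ directly from the isogeny relation, and then right-divides $\tilde\phi_y$ by exploiting the curve equation $y^2=\mathrm{Tr}(\zeta)xy-\zeta^{q+1}x^2+x$ to exhibit $\phi_x$ as a right-divisor of $\phi_{I_\infty}\phi_y$; since $\tilde\phi_{I_\infty}$ is by definition the monic greatest common right-divisor of $\tilde\phi_x,\tilde\phi_y$, degree comparison forces $A=\tilde\phi_{I_\infty}$ (and similarly $B_0=\tilde\phi_{I_0}$ using $x(x-\zeta^{-q-1})\in(y)$). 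You instead compute kernels: the pushforward identity $\phi_{I_\infty}(\ker\phi_{JI_\infty})=\ker\tilde\phi_J$, specialized to $J=I_\infty$ and $J=I_0$ via $I_\infty^2=(x)$ and $I_0I_\infty=(y)$, pins down $\ker A$ and $\ker B_0$ directly, and separability finishes.

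Your approach is cleaner and more portable: it never touches the specific relation $\rho(x,y)=0$ beyond the ideal factorizations $I_\infty^2=(x)$, $I_0I_\infty=(y)$, and would work verbatim for any Dedekind domain with analogous ideal identities. The paper's approach has the modest advantage of staying entirely inside $\bar K\{\tau\}$ without invoking separability or kernel-counting, but at the cost of ad hoc manipulations with the curve equation. Your remark that the separability of the annihilators (nonzero constant term in generic characteristic) is the only point needing care is accurate and suffices.
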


\begin{proof}
Since $ \phi _{I_{\infty}} $ gives isogeny from $\phi $ to
$ \tilde\phi $, we obtain the equality
%
\begin{align}
\label{Eq:phiinfis}
\phi _{I_{\infty}} \phi _{a} = \tilde\phi _{a} \phi _{I_{\infty}}
\end{align}
for $ a \in \mathcal A $. Since $ I_{\infty }= (x,y) $, both
$\phi _{x}$ and $ \phi _{y} $ are right-divisible by
$ \phi _{I_{\infty}}$, and accordingly both $\tilde \phi _{x}$ and
$ \tilde\phi _{y} $ are right-divisible by
$ \tilde {\phi}_{I_{\infty}}$. Thus we can set
%
\begin{equation}
\label{Eq:phixU1}
\phi _{x}=U_{1}\phi _{I_{\infty}}
\end{equation}
for some twist polynomial $ U_{1} $. We assert that $ U_{1} $ is the annihilator
of $ (x,y) $ with $ \tilde \phi $-action, i.e.,
$ U_{1} = \tilde {\phi}_{I_{\infty }}$ is the common right-divisor of both
$ \tilde\phi _{x} $ and $ \tilde\phi _{y} $. From Equation \text{\eqref{Eq:phiinfis}}, we know that $U_{1}$ is a right-divisor of
$\tilde \phi _{x}$. On the other hand, as $\phi _{xy}$ and
$\phi _{y^{2}} = \phi _{ \mathrm{Tr}(\zeta ) x y - \zeta ^{q+1} x^{2} +
x}$ are right-divisible by $\phi _{x}$, and
$\phi _{I_{\infty}}\phi _{y}$ is the annihilator of the ideal
$(xy, y^{2})$, we derive that $\phi _{x} $ is a right-divisor of
$\phi _{I_{\infty}}\phi _{y} $. Notice that
$\phi _{x} = U_{1} \phi _{I_{\infty}}$ and
$\phi _{I_{\infty}}\phi _{y} = \tilde\phi _{y} \phi _{I_{\infty}} $. We
can cancel $ \phi _{I_{\infty}}$ of both sides to obtain that
$ U_{1} $ is a right-divisor of $ \tilde\phi _{y} $. Substituting
$U_{1} =\tilde{\phi}_{I_{\infty}} $ in Equation \text{\eqref{Eq:phixU1}}, we have
%
\begin{align}
\label{Eq:phix}
\phi _{x}=\tilde\phi _{I_{\infty}}\phi _{I_{\infty}}.
\end{align}

Similarly, notice that $ I_{0} = (x-\zeta ^{-q-1},y)$, we can set
$\phi _{y}=\zeta ^{q} U_{2}\phi _{I_{\infty}} $ for some rank two twist
polynomial $ U_{2} $. It requires to show that
$ U_{2} = \tilde \phi _{I_{0}}$, and thus
\begin{align*}
\phi _{y} = \zeta ^{q} \tilde \phi _{I_{0}} \phi _{I_{\infty}}.
\end{align*}
In the same fashion, we deduce from Equation \text{\eqref{Eq:phiinfis}} that
$U_{2}$ is right-divisible by $\tilde \phi _{y}$. As both
$\phi _{y(x-\zeta ^{-q-1})}$ and
$\phi _{x(x-\zeta ^{-q-1})}= \phi _{\zeta ^{-q-1} \mathrm{Tr}(\zeta ) x
y - \zeta ^{-q-1} y^{2} }$ are right-divisible by $\phi _{y}$, and
$\phi _{I_{\infty}}\phi _{x-\zeta ^{-q-1}}$ is the annihilator of
$\big(y(x-\zeta ^{-q-1}), ~x(x-\zeta ^{-q-1})\big)$, it yields that
$\phi _{y}$ is a right-divisor of
$\phi _{I_{\infty}}\phi _{x-\zeta ^{-q-1}}$. Substituting
$ x - \zeta ^{-q-1}$ in Equation \text{\eqref{Eq:phiinfis}} yields
\begin{equation*}
\phi _{I_{\infty}} \phi _{x - \zeta ^{-q-1}} = \tilde\phi _{x -
\zeta ^{-q-1}} \phi _{I_{\infty}}.
\end{equation*}
Therefore, $\tilde\phi _{x - \zeta ^{-q-1}} \phi _{I_{\infty}} $ is right-divisible
by $\phi _{y}=\zeta ^{q} U_{2}\phi _{I_{\infty}} $, which implies that
$ U_{2} $ is a right-divisor of $ \tilde\phi _{x-\zeta ^{-q-1}}$.

In combination with Equations \text{\eqref{Eq:phiinfis}} and \text{\eqref{Eq:phix}},
we have
\begin{align*}
\tilde \phi _{I_{\infty}}\tilde \phi _{a}\phi _{I_{\infty}} &=
\tilde{\phi}_{I_{\infty}} \phi _{I_{\infty}}\phi _{a}
\\
& =\phi _{x}\phi _{a}= \phi _{a}\phi _{x}
\\
& = \phi _{a} \tilde\phi _{I_{\infty}}\phi _{I_{\infty}}
\end{align*}
for any $a \in \mathcal{A}$. By canceling $\phi _{I_{\infty}}$, we have
$\tilde \phi _{I_{\infty}}\tilde \phi _{a} = \phi _{a} \tilde\phi _{I_{
\infty}} $. It means that $\tilde \phi _{I_{\infty}}$ represents an isogeny
from $ \tilde \phi $ to $ \phi $. Therefore, the expression in Equation \text{\eqref{Eq:letphi}} can be obtained by the previous argument.
\end{proof}

According to the expressions in Equations \text{\eqref{Eq:lephi}} and \text{\eqref{Eq:letphi}}, we can further assume that
%
\begin{align}
\phi _{I_{\infty }} &= \tau ^{2} + \alpha \tau + \delta ,
\label{Eq:phiiinf}
\\
\phi _{I_{0} } &= \tau ^{2} + \beta \tau +
\frac{y}{\zeta ^{q} \tilde\delta},
\label{Eq:phii0}
\\
\tilde\phi _{I_{\infty }} &= \tau ^{2} + \tilde{\alpha} \tau +
\frac{x}{\delta},
\label{Eq:tphiiinf}
\\
\tilde \phi _{I_{0} } &= \tau ^{2} + \tilde{\beta} \tau +
\frac{y}{\zeta ^{q}\delta},
\label{Eq:tphii0}
\end{align}
where $\alpha $, $\beta $, $\delta $, $\tilde\alpha $,
$\tilde\beta $, $\tilde\delta $ are indeterminates. The following lemma
yields that the motive structure of $ \phi $ can be represented by these
indeterminates.

\begin{lem}%
\label{Lem:phixyABC}
With the notations above, the motive structure of $ \phi $ is represented
by
\begin{equation*}
(y \otimes 1 - x \otimes \zeta -1 \otimes C \alpha )\cdot \tau + (y
\otimes A - x \otimes B -1 \otimes C \delta ) \cdot 1_{\phi }= C
\tau ^{2},
\end{equation*}
with
\begin{equation*}
A = \frac{\zeta }{\zeta - \zeta ^{q}}\lambda ^{q}, \qquad B =
\frac{\zeta ^{2} }{\zeta - \zeta ^{q}}\lambda ^{q}, \qquad C =
\frac{T }{(1-\zeta ^{q-1})} \frac{\lambda ^{q}}{\delta},
\end{equation*}
and $ \lambda = \tilde\beta - \tilde \alpha $. In addition, the wedge product
$ \wedge ^{2} \phi $ is given by $ \psi ^{(\zeta ,-\frac{A}{C})} $ (see
\text{Definition~\ref{Defn:psize*}}).
\end{lem}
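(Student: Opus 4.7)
My plan is to reduce the motive relation to a lower-degree twist polynomial identity by cancelling the common right factor $\phi_{I_\infty}$. Starting from the decompositions $\phi_x = \tilde{\phi}_{I_\infty}\phi_{I_\infty}$ and $\phi_y = \zeta^q \tilde{\phi}_{I_0}\phi_{I_\infty}$ provided by Lemma~\ref{Lem:phi}, I would first form
\[
\phi_y - \zeta^q \phi_x = \zeta^q (\tilde{\phi}_{I_0} - \tilde{\phi}_{I_\infty}) \phi_{I_\infty}.
\]
The algebraic identity $y - \zeta^q x = x/T$ that follows from $\rho(x,y) = 0$ collapses the left factor into the linear twist polynomial $\mathcal{N} := \zeta^q \lambda \tau + x/(T\delta)$, where $\lambda = \tilde{\beta} - \tilde{\alpha}$.

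The key algebraic fact is $\tau \cdot \zeta^q = \zeta^{q^2} \tau = \zeta \tau$ in $\bar{K}\{\tau\}$, since $\zeta \in \mathbb{F}_{q^2}$. This lets me rewrite $(y \otimes 1 - x \otimes \zeta) \cdot \tau = \tau \phi_y - \zeta \tau \phi_x$ as $\tau (\phi_y - \zeta^q \phi_x) = \tau \mathcal{N} \phi_{I_\infty}$, substantially lowering its effective degree. In parallel, $(y \otimes A - x \otimes B) \cdot 1_\phi = (A \zeta^q \tilde{\phi}_{I_0} - B \tilde{\phi}_{I_\infty}) \phi_{I_\infty}$, while $C \tau^2 + C\alpha \tau + C\delta = C \phi_{I_\infty}$. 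Hence the asserted motive relation is equivalent, after cancelling $\phi_{I_\infty}$ on the right, to the twist polynomial identity
\[
\tau \mathcal{N} + A \zeta^q \tilde{\phi}_{I_0} - B \tilde{\phi}_{I_\infty} = C.
\]

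I would then expand $\tau \mathcal{N} = \zeta \lambda^q \tau^2 + (x^q/(T^q \delta^q)) \tau$ and match coefficients of $\tau^2$, $\tau$, and $1$ against the constant $C$. The $\tau^2$ coefficient gives $\zeta \lambda^q + A \zeta^q - B = 0$, which together with $A = \zeta \lambda^q / (\zeta - \zeta^q)$ produces the stated $B$. The constant coefficient gives $(Ay - Bx)/\delta = C$, and factoring $Ay - Bx = \zeta \lambda^q (y - \zeta x)/(\zeta - \zeta^q) = \zeta \lambda^q T / (\zeta - \zeta^q)$ yields the stated $C$. The coefficient of $\tau$ reads $x^q/(T^q \delta^q) + A \zeta^q \tilde{\beta} - B \tilde{\alpha} = 0$; this is the main technical obstacle, since it is a nontrivial compatibility between $\tilde{\alpha}, \tilde{\beta}, \delta$ and $\lambda$ that is not manifest from the factorizations alone. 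I expect to derive it by combining the decompositions of Lemma~\ref{Lem:phi} with the Drinfeld module commutation $\phi_x \phi_y = \phi_y \phi_x$ (equivalently, the axiom $\phi_y^2 - \mathrm{Tr}(\zeta)\phi_x\phi_y + \zeta^{q+1}\phi_x^2 = \phi_x$), which forces $\tilde{\alpha}$ and $\tilde{\beta}$ to satisfy precisely this identity.

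For the wedge product, the motive relation yields $\tau^2 = (p/C) \cdot 1_\phi + (q/C) \cdot \tau$ in $\bar{M}(\phi)$ with $p = y \otimes A - x \otimes B - 1 \otimes C\delta$ and $q = y \otimes 1 - x \otimes \zeta - 1 \otimes C\alpha$, giving the matrix of multiplication-by-$\tau$ on $\bar{M}(\phi)$ in the basis $\{1_\phi, \tau\}$. Its determinant $-p/C$ is the scalar by which $\tau$ acts on the rank one module $\wedge^2 \bar{M}(\phi) = \bar{M}(\wedge^2 \phi)$. Rewriting $-p/C = \delta - y \otimes (A/C) + x \otimes (B/C)$ and comparing with the normal form $-c + y \otimes a - x \otimes b$ of Proposition~\ref{Prop:psimo} identifies $a = -A/C$, $b = -B/C$, $c = -\delta$. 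Since $b/a = B/A = \zeta$, I conclude $\wedge^2 \phi = \psi^{(\zeta, -A/C)}$, as claimed.
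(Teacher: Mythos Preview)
Your reduction of the motive relation to the twist identity $(\tau+A)\phi_y - (\zeta\tau+B)\phi_x = C\phi_{I_\infty}$ and the subsequent cancellation of $\phi_{I_\infty}$ is correct and matches the paper's equation~\eqref{Eq:phixyI}. The wedge-product computation is likewise the same as the paper's. The difference lies in how you handle the coefficients $A,B,C$.

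You take the \emph{simplified} values of $A,B,C$ as given and attempt to verify the three coefficient equations directly. The $\tau^2$ and constant terms go through cleanly, as you note. But the $\tau$-coefficient equation
\[
\frac{x^q}{T^q\delta^q} + A\zeta^q\tilde\beta - B\tilde\alpha = 0
\]
is equivalent (after substituting $B=\zeta\lambda^q+A\zeta^q$ and $\lambda=\tilde\beta-\tilde\alpha$) to the explicit formula
\[
\tilde\alpha \;=\; \frac{T^{\sigma q}}{\zeta\lambda^q\delta^q} + \frac{\zeta^q\lambda}{\zeta-\zeta^q},
\]
which is precisely the paper's equation~\eqref{Eq:alphaexp}. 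You correctly identify this as the crux but only say you ``expect to derive it'' from commutativity. That derivation is not carried out, and it is the entire content of the lemma beyond bookkeeping.

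The paper avoids this gap by reversing the logic. It first writes down the motive relation with \emph{unsimplified} expressions for $A,B,C$ (equations~\eqref{Eq:Af}--\eqref{Eq:Cf}), obtained by an explicit linear combination of $\phi_y-\zeta^q\phi_x$ and $\tau\phi_y-\zeta\tau\phi_x$ that makes the relation hold \emph{by construction}. Only then does it compute $\wedge^2\phi$ and invoke Proposition~\ref{Prop:psimo}: since $B/A$ must be a root of $\pi$, and the explicit formulas force $B-\zeta^q A=\zeta\lambda^q\neq 0$, one gets $B=\zeta A$. This single relation, fed back into \eqref{Eq:Af}--\eqref{Eq:Bf}, \emph{produces} the formula for $\tilde\alpha$ (and $\tilde\beta$) as output rather than requiring it as input, and the clean forms of $A,B,C$ follow. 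Thus the paper uses the rank-one classification as the engine that determines $\tilde\alpha$, whereas your plan would need to extract the same constraint from the degree-eight commutator $\phi_x\phi_y-\phi_y\phi_x$ by hand --- possible in principle, but substantially more work than the route via Proposition~\ref{Prop:psimo}.
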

\begin{proof}
Substituting Equations \text{\eqref{Eq:tphiiinf}} and \text{\eqref{Eq:tphii0}} in Equations \text{\eqref{Eq:lephi}}, it yields
%
\begin{align}
\label{Eq:phixy3}
\begin{cases}
\phi _{x} = (\tau ^{2} + \tilde{\alpha} \tau + \frac{x}{\delta})
\phi _{I_{\infty}}
\\
\phi _{y} = \zeta ^{q} (\tau ^{2} + \tilde{\beta} \tau +
\frac{y}{\zeta ^{q} \delta}) \phi _{I_{\infty}} .
\end{cases}
\end{align}
From the settings $ \lambda = \tilde{\beta}- \tilde{\alpha} $,
$T^{\sigma} = y-\zeta ^{q} x $, and Equations \text{\eqref{Eq:phixy3}}, we have
the equalities
%
\begin{align}
\label{Eq:phixy3t1}
\phi _{y} - \zeta ^{q} \phi _{x} = \big( \lambda \zeta ^{q} \tau +
\frac{T^{\sigma}}{\delta} \big) \phi _{I_{\infty}}
\end{align}
and
%
\begin{align}
\label{Eq:phixy4}
\tau \phi _{y} - \zeta \tau \phi _{x} = \Big( \zeta \lambda ^{q}
\tau ^{2}+\frac{T^{\sigma q}\tau}{\delta ^{q}} \Big ) \phi _{I_{
\infty}} .
\end{align}
By subtracting $\frac{\zeta \lambda ^{q}}{\zeta ^{q} } \phi _{y} $ in both
sides of Equation \text{\eqref{Eq:phixy4}}, it yields
%
\begin{align}
\label{Eq:phixy3t2}
\tau \phi _{y} - \zeta \tau \phi _{x} -
\frac{\zeta \lambda ^{q}}{\zeta ^{q} } \phi _{y} = \Big((
\frac{T^{\sigma q}}{\delta ^{q}} - \tilde{\beta} \zeta \lambda ^{q})
\tau - \frac{\zeta \lambda ^{q} y}{ \zeta ^{q} \delta}\Big) \phi _{I_{
\infty}}.
\end{align}
Using the linear combination
$\text{\text{(\ref{Eq:phixy3t2})}} +
\frac{ \tilde{\beta} \zeta \lambda ^{q} \delta ^{q}- T^{\sigma q }}{\zeta ^{q} \lambda \delta ^{q}}
\text{\text{(\ref{Eq:phixy3t1})}} $, we obtain
%
\begin{align}
& (\tau +A) \phi _{y} - (\zeta \tau + B) \phi _{x}
\nonumber
\\
= & \left ( \tau \phi _{y} - \zeta \tau \phi _{x} -
\frac{\zeta \lambda ^{q}}{\zeta ^{q}} \phi _{y} \right ) +
\frac{ \tilde{\beta} \zeta \lambda ^{q} \delta ^{q}- T^{\sigma q }}{\zeta ^{q} \lambda \delta ^{q}}
\left (\phi _{y} - \zeta ^{q} \phi _{x}\right )
\nonumber
\\
= &\Big(
\frac{\tilde{\beta} \zeta T^{\sigma}\lambda ^{q}\delta ^{q} -T^{\sigma (q+1) }}{\zeta ^{q} \lambda \delta ^{q+1}}
- \frac{\zeta \lambda ^{q} y}{ \zeta ^{q} \delta}\Big) \phi _{I_{
\infty}}
\nonumber
\\
= & C \phi _{I_{\infty}},
\label{Eq:phixyI}
\end{align}
where
%
\begin{align}
A : = &
\frac{ \tilde{\beta} \zeta \lambda ^{q} \delta ^{q}- T^{\sigma q }}{\zeta ^{q} \lambda \delta ^{q}}-
\frac{\zeta \lambda ^{q}}{\zeta ^{q}} = \tilde{\beta} (
\frac{\lambda}{\zeta})^{q-1} -
\frac{ T^{\sigma q }}{\zeta ^{q} \lambda \delta ^{q}} -
\frac{\lambda ^{q}}{\zeta ^{q-1}},
\label{Eq:Af}
\\
B: =&
\frac{ \tilde{\beta} \zeta \lambda ^{q} \delta ^{q}- T^{\sigma q }}{ \lambda \delta ^{q}}
= \tilde{\beta} \zeta \lambda ^{q-1} -
\frac{ T^{\sigma q }}{ \lambda \delta ^{q}},
\label{Eq:Bf}
\\
C := &
\frac{\tilde{\beta} \zeta T^{\sigma}\lambda ^{q} \delta ^{q}-T^{\sigma (q+1) }}{\zeta ^{q} \lambda \delta ^{q+1}}
- \frac{\zeta \lambda ^{q} y}{ \zeta ^{q} \delta} =
\frac{T^{\sigma}}{\zeta ^{q}\delta} \cdot \Big (
\frac{\tilde{\beta} \zeta \lambda ^{q} \delta ^{q} -T^{\sigma q }}{\lambda \delta ^{q}}
- \frac{\zeta \lambda ^{q} t}{ (t -\zeta ^{q} )}\Big).
\label{Eq:Cf}
\end{align}
Substituting the expression of $\phi _{I_{\infty}}$ in Equation \text{\eqref{Eq:phixyI}}, we find that the pair ($ \phi _{x} , \phi _{y}$) verifies
%
\begin{equation}
\label{Eq:phixyre}
(\tau +A) \phi _{y} - (\zeta \tau + B) \phi _{x} = C (\tau ^{2} +
\alpha \tau + \delta ).
\end{equation}
Let $ 1_{\phi }, \tau , \tau ^{2} , \ldots $ be the generators of
$ \bar{M}(\phi ) $. Equation \text{\eqref{Eq:phixyre}} implies that the motive
structure of $ \phi $ verifies
\begin{equation*}
(y \otimes 1 - x \otimes \zeta -1 \otimes C \alpha )\cdot \tau + (y
\otimes A - x \otimes B -1 \otimes C \delta ) \cdot 1_{\phi }= C
\tau ^{2} .
\end{equation*}
Let $ 1_{\psi }= 1 \wedge \tau $ be the generator of $ \psi $. It follows
that
\begin{align*}
C\tau (1_{\psi}) & = C\tau \wedge \tau ^{2}
\\
& = \tau \wedge \left ( (y \otimes 1 - x \otimes \zeta -1 \otimes C
\alpha )\cdot \tau + (y\otimes A - x \otimes B -1 \otimes C \delta )
\cdot 1_{\phi }\right )
\\
& = \left ( 1\otimes C \delta - y \otimes A + x \otimes B \right )( 1
\wedge \tau ) .
\end{align*}
After dividing by $C$, it yields
\begin{equation*}
\tau (1_{\psi }) = \left ( \delta - y \otimes \frac{A}{C} + x
\otimes \frac{B}{C} \right )( 1_{\psi }) .
\end{equation*}
It follows from \text{Proposition~\ref{Prop:psimo}} that
$ \wedge ^{2} \phi = \psi ^{(\zeta ,-\frac{A}{C})} $, and
%
\begin{align}
\label{Eq:ABC}
\frac{A}{C \delta} = \frac{1}{T} , \ \ \ \ \ \frac{B}{C \delta } =
\frac{\zeta}{T}.
\end{align}
Substituting \text{\eqref{Eq:Af}} and \text{\eqref{Eq:Bf}} in the equality
$ B = \zeta A $ shows that
%
\begin{equation}
\label{Eq:betaexp}
\tilde{\beta} = \frac{ T^{\sigma q}}{\zeta \lambda ^{q}\delta ^{q}}+
\frac{\lambda }{1-\zeta ^{q-1}}.
\end{equation}
Noting that $\lambda =\tilde \beta -\tilde \alpha $, we have
%
\begin{equation}
\label{Eq:alphaexp}
\tilde{\alpha} = \frac{ T^{\sigma q }}{\zeta \lambda ^{q}\delta ^{q}}+
\frac{\zeta ^{q}\lambda }{\zeta -\zeta ^{q}}.
\end{equation}
Furthermore, substituting $\tilde{\alpha}$ and $ \tilde{\beta}$ in Equations \text{\eqref{Eq:Af}}-\text{\eqref{Eq:Cf}}, we have
\begin{equation*}
A = \frac{\zeta }{\zeta - \zeta ^{q}}\lambda ^{q}, \qquad B =
\frac{\zeta ^{2} }{\zeta - \zeta ^{q}}\lambda ^{q}
\end{equation*}
and
%
\begin{align}
\label{Eq:C}
C &= \frac{BT}{\zeta \delta} = \frac{T }{(1-\zeta ^{q-1})}
\frac{\lambda ^{q}}{\delta}.\qedhere
\end{align}
\end{proof}

Now we are in a position to prove our main result.
%
\begin{thm}%
\label{Thm:thphi}
Let us fix $ \nu $ as a $(q+1)$-th root of $ -T^{\sigma +q} $. An arbitrary
$\zeta ^{q}$-type rank two normalized Drinfeld module over
$ \mathcal{A} $-field $ \bar K $ can be represented by
$(\phi _{x}, \phi _{y} )$ of the forms
\begin{equation*}
\begin{cases}
\phi _{x} := (\tau ^{2} + \tilde{\alpha} \tau +
\frac{x}{\nu \lambda ^{q-1}} ) (\tau ^{2}+ \alpha \tau + \nu \lambda ^{q-1})
\\
\phi _{y} := \zeta ^{q} (\tau ^{2} + \tilde{\beta} \tau +
\frac{y}{\zeta ^{q} \nu \lambda ^{q-1}} ) (\tau ^{2} + \alpha \tau +
\nu \lambda ^{q-1}) ,
\end{cases}
\end{equation*}
with
\begin{align*}
\tilde{\alpha} & = -
\frac{ \nu T^{\sigma q-(q+\sigma )}}{\zeta \lambda ^{q^{2}}} +
\frac{\zeta ^{q}\lambda }{\zeta -\zeta ^{q}},
\\
\tilde{\beta} & =-
\frac{ \nu T^{\sigma q-(q+\sigma )}}{\zeta \lambda ^{q^{2}}}+
\frac{\zeta \lambda }{\zeta -\zeta ^{q}} ,
\\
\alpha & =\frac{\lambda ^{q^{2}}}{1-\zeta ^{1-q}}+
\frac{\nu }{\zeta T\lambda }.
\end{align*}
Moreover, the wedge product  $ \wedge ^{2} \phi $ is identical to $\psi^{(\zeta,-\frac{\nu\lambda^{q-1}}{T})} $
{$\psi ^{(\zeta ,-\frac{\delta}{T})} $} defined in Equation \text{\eqref{Eq:defpsi}}, namely,
\begin{equation*}
\begin{cases}
\psi_x =  -\lambda^{1-q^2}\big(\tau-(\nu\lambda^{q-1})^q T^{1-q}\big)(\tau - \nu\lambda^{q-1})  \\
			\psi_y = -\zeta\lambda^{1-q^2}\big(\tau-\frac{t(\nu\lambda^{q-1})^q T^{1-q}}{\zeta}\big) (\tau -\nu\lambda^{q-1}). 
\end{cases}
\end{equation*}
\end{thm}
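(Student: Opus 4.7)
The proof begins from Lemmas \ref{Lem:phi} and \ref{Lem:phixyABC}. Together they present any $\zeta^q$-type rank two normalized Drinfeld module $\phi$ over $\bar K$ via the annihilator decomposition $\phi_x = \tilde\phi_{I_\infty}\phi_{I_\infty}$, $\phi_y = \zeta^q\tilde\phi_{I_0}\phi_{I_\infty}$, with $\phi_{I_\infty} = \tau^2 + \alpha\tau + \delta$, and with the middle coefficients $\tilde\alpha, \tilde\beta$ of $\tilde\phi_{I_\infty}$ and $\tilde\phi_{I_0}$ already pinned down by Lemma \ref{Lem:phixyABC} as explicit functions of $\lambda = \tilde\beta-\tilde\alpha$ and $\delta$. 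The remaining scalars $\alpha$ and $\delta$ are not yet constrained by those lemmas; the plan is to use the Drinfeld-module axioms that have not yet been invoked to force them into the stated shape.

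The central step I would take is to expand the quadratic relation $\phi_y^2 - \mathrm{Tr}(\zeta)\phi_x\phi_y + \zeta^{q+1}\phi_x^2 = \phi_x$ (equivalently $\phi_{\rho(x,y)}=0$) as a twist-polynomial identity using the factorizations from Lemma \ref{Lem:phi}. Reading off two well-chosen coefficients in $\tau$ and eliminating $\tilde\alpha, \tilde\beta$ by the Lemma \ref{Lem:phixyABC} formulas produces, after simplification with $y-\zeta x = T$, $y-\zeta^q x = T^\sigma$, $T T^\sigma = x$, and $\zeta-\zeta^q = \zeta(1-\zeta^{q-1})$, two collapsed equations. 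The first reads $\delta^{q+1} = -\lambda^{q^2-1}T^{\sigma+q}$, which together with the fixed choice $\nu^{q+1} = -T^{\sigma+q}$ forces $\delta = \nu\lambda^{q-1}$; the second yields the explicit value $\alpha = \frac{\lambda^{q^2}}{1-\zeta^{1-q}} + \frac{\nu}{\zeta T\lambda}$.

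Once $\delta = \nu\lambda^{q-1}$ is installed, the formulas for $\tilde\alpha, \tilde\beta$ in Lemma \ref{Lem:phixyABC} simplify directly, because $T^{\sigma q}/\nu^q = -\nu T^{\sigma q - q - \sigma}$ (a one-step consequence of $\nu^{q+1} = -T^{\sigma+q}$); this gives the theorem's explicit forms for $\phi_x$ and $\phi_y$. For the wedge product, Lemma \ref{Lem:phixyABC} has already shown $\wedge^2\phi = \psi^{(\zeta, -\delta/T)}$, so I would substitute $\delta = \nu\lambda^{q-1}$ and expand $\psi^{(\zeta, a)}_x$ and $\psi^{(\zeta, a)}_y$ of Definition \ref{Defn:psize*} at $a = -\nu\lambda^{q-1}/T$; using $a^{q+1} = -\lambda^{q^2-1}T^{\sigma-1}$, $T T^\sigma = x$, and $\frac{t-\zeta^q}{t-\zeta} = T^{\sigma-1}$, the coefficients reorganize into the claimed factorization $\psi_x = -\lambda^{1-q^2}(\tau-(\nu\lambda^{q-1})^qT^{1-q})(\tau-\nu\lambda^{q-1})$, with $\psi_y$ handled analogously.

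The chief obstacle is the coefficient extraction. Every twist-polynomial multiplication drags in Frobenius shifts ($\tau c = c^q \tau$), so the polynomial identity is dense with terms indexed by a range of $q$-powers; isolating the two equations that genuinely constrain $(\alpha,\delta)$ requires careful bookkeeping. The decisive simplification is that once the Lemma \ref{Lem:phixyABC} expressions for $\tilde\alpha, \tilde\beta$ are substituted, most of the $\lambda$-dependent contributions telescope, leaving two clean equations—one for $\delta^{q+1}$ and one for $\alpha$—that collapse to the stated formulas via $\nu^{q+1} = -T^{\sigma+q}$.
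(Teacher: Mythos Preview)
Your plan is plausible in principle but takes a genuinely different route from the paper, and the heaviest step is only asserted rather than executed.

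The paper does \emph{not} expand the curve relation $\rho(\phi_x,\phi_y)=0$ and extract coefficients. Instead it exploits the $\phi\leftrightarrow\tilde\phi$ symmetry already set up in Lemma~\ref{Lem:phi}. From the motive relation \eqref{Eq:phixyre} of Lemma~\ref{Lem:phixyABC} the paper first reads off $\phi_{I_0}$ explicitly, obtaining $\phi_{I_0}-\phi_{I_\infty}=\frac{(1-\zeta^{q-1})}{T\zeta^q}\frac{\delta}{\lambda^q}\tau+\frac{\delta}{\zeta^q T}$. It then runs the entire Lemma~\ref{Lem:phixyABC} analysis a second time with the roles of $\phi$ and $\tilde\phi$ swapped, introducing $\tilde\lambda=\beta-\alpha$ and obtaining the parallel relation $\tilde\phi_{I_0}-\tilde\phi_{I_\infty}=\frac{(1-\zeta^{q-1})}{T\zeta^q}\frac{\tilde\delta}{\tilde\lambda^q}\tau+\frac{\tilde\delta}{\zeta^q T}$. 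Comparing this with the direct computation $\tilde\phi_{I_0}-\tilde\phi_{I_\infty}=\lambda\tau+\frac{T^\sigma}{\zeta^q\delta}$ (and the analogous comparison for $\phi_{I_0}-\phi_{I_\infty}$) yields $\tilde\delta=x/\delta$, $\tilde\lambda=\frac{(1-\zeta^{q-1})}{T\zeta^q}\frac{\delta}{\lambda^q}$, and $\lambda=\frac{(1-\zeta^{q-1})}{T\zeta^q}\frac{\tilde\delta}{\tilde\lambda^q}$; eliminating $\tilde\lambda,\tilde\delta$ gives $\delta^{q+1}=-T^{\sigma+q}\lambda^{q^2-1}$ with no coefficient chase at all. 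The formula for $\alpha$ then drops out of the swapped version of \eqref{Eq:alphaexp}, namely $\alpha=\frac{T^{\sigma q}}{\zeta\tilde\lambda^q\tilde\delta^q}+\frac{\zeta^q\tilde\lambda}{\zeta-\zeta^q}$, by substituting the values just found.

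What the paper's approach buys is that each comparison is between two degree-one twist polynomials, so one never has to manage the degree-eight expansion you are proposing. Your route---reading off coefficients of $\phi_y^2-\mathrm{Tr}(\zeta)\phi_x\phi_y+\zeta^{q+1}\phi_x^2-\phi_x=0$---would in principle recover the same constraints, but you have not identified \emph{which} two coefficients isolate $(\alpha,\delta)$, nor shown that the $\lambda$-dependent terms actually telescope as claimed; that is precisely the hard content, and your proposal leaves it as an assertion. If you pursue your method, you would also need to verify separately that commutativity $\phi_x\phi_y=\phi_y\phi_x$ imposes no further conditions, something the paper's symmetric argument handles automatically.
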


\begin{proof}
Adding $(\zeta \tau + B)\zeta ^{-q-1}$ to both sides of Equation \text{\eqref{Eq:phixyre}}, it follows
%
\begin{equation}
\label{Eq:tauABphiInf}
(\tau +A) \phi _{y} + (\zeta \tau + B) (\zeta ^{-q-1}- \phi _{x}) = C (
\phi _{I_{\infty}} + \frac{1}{\zeta ^{q} C} \tau +
\frac{\delta}{\zeta ^{q} T}).
\end{equation}
Recalling that $ \phi _{I_{0}}$ is an annihilator of
$I_{0} = (\zeta ^{-1-q} - x , y ) $, we know that $\phi _{I_{0}}$ is a
monic polynomial of $\tau $-degree two. Now both sides of \text{\eqref{Eq:tauABphiInf}} are divisible by $ \phi _{I_{0}}$, and thus
\begin{align*}
\phi _{I_{0}} = \phi _{I_{\infty}} + \frac{1}{\zeta ^{q} C} \tau +
\frac{\delta}{\zeta ^{q} T} = \tau ^{2} + (\alpha +
\frac{1}{\zeta ^{q} C}) \tau + \delta (1+\frac{1}{\zeta ^{q}T}).
\end{align*}
It follows from the expansion of $C$ in Equation \text{\eqref{Eq:C}} that
%
\begin{align}
\label{Eq:phiI0i1}
\phi _{I_{0}} - \phi _{I_{\infty }} &= \frac{1}{\zeta ^{q} C} \tau +
\frac{\delta}{\zeta ^{q} T} =\frac{(1-\zeta ^{q-1})}{T \zeta ^{q} }
\frac{\delta} {\lambda ^{q}} \tau + \frac{\delta}{\zeta ^{q} T}.
\end{align}
From Equations \text{\eqref{Eq:tphiiinf}} and \text{\eqref{Eq:tphii0}}, we have
%
\begin{align}
\label{Eq:tphiI0i1}
\tilde \phi _{I_{0}} - \tilde \phi _{I_{\infty }} = (\tilde{\beta} -
\tilde{\alpha}) \tau + \frac{y}{\zeta ^{q} \delta} -
\frac{x}{\delta } = \lambda \tau +
\frac{T^{\sigma }}{\zeta ^{q} \delta } .
\end{align}
Substituting Equations \text{\eqref{Eq:phiiinf}}-\text{\eqref{Eq:tphiiinf}} in Equations \text{\eqref{Eq:letphi}}, we have
\begin{equation*}
\begin{cases}
\tilde \phi _{x} = (\tau ^{2} + \alpha \tau + \frac{x}{\tilde\delta} )
(\tau ^{2} + \tilde{\alpha} \tau + \tilde{\delta} )
\\
\tilde \phi _{y} = \zeta ^{q} (\tau ^{2} + \beta \tau +
\frac{y}{\zeta ^{q} \tilde\delta} )(\tau ^{2} + \tilde{\alpha} \tau +
\tilde{\delta} ).
\end{cases}
\end{equation*}
Setting $ \tilde{\lambda} = \beta - \alpha $, we start another process
in parallel that
%
\begin{align}
\label{Eq:tphiI0i2}
\tilde{\phi}_{I_{0}} - \tilde{\phi}_{I_{\infty }} &=
\frac{(1-\zeta ^{q-1})}{T \zeta ^{q} }
\frac{\tilde{\delta}} {\tilde{\lambda}^{q}} \tau +
\frac{\tilde{\delta}}{\zeta ^{q} T},
\\
\label{Eq:phiI0i2}
\phi _{I_{0}} - \phi _{I_{\infty }} & = \tilde{\lambda} \tau +
\frac{T^{\sigma }}{\zeta ^{q} \tilde{\delta}},
\end{align}
which are analogous to Equations \text{\eqref{Eq:phiI0i1}} and \text{\eqref{Eq:tphiI0i1}} respectively. Hence, comparing \text{\eqref{Eq:phiI0i1}} with \text{\eqref{Eq:phiI0i2}} shows that $\tilde{\delta} = \frac{x}{\delta}$ and
%
\begin{align}
\label{Eq:tlade}
\tilde{\lambda} = \frac{(1-\zeta ^{q-1})}{T \zeta ^{q} }
\frac{\delta} {\lambda ^{q}} .
\end{align}
Accordingly, Equations \text{\eqref{Eq:tphiI0i1}} and \text{\eqref{Eq:tphiI0i2}} yield
%
\begin{align}
\label{Eq:la}
\lambda = \frac{(1-\zeta ^{q-1})}{T \zeta ^{q} }
\frac{\tilde\delta} {\tilde\lambda ^{q}} .
\end{align}
Together with Equation \text{\eqref{Eq:tlade}}, this implies
\begin{equation*}
\delta ^{q+1} = - T^{\sigma +q} \lambda ^{q^{2}-1},
\end{equation*}
i.e., $ \delta = \nu \lambda ^{q-1} $. In parallel to
$\tilde\alpha $ in Equation \text{\eqref{Eq:alphaexp}}, we have
\begin{equation*}
\alpha =
\frac{ T^{\sigma q }}{\zeta \tilde{\lambda}^{q} \tilde{\delta}^{q}}+
\frac{\zeta ^{q}\tilde{\lambda} }{\zeta -\zeta ^{q}}.
\end{equation*}
Then the theorem holds by substituting
$ \delta = \nu \lambda ^{q-1}$ into
$\alpha ,~ \tilde{\alpha}, ~\tilde{\beta}$.
\end{proof}

\subsection{Complete family of rank two Drinfeld modules}
\label{sec4.2}

The Drinfeld module $ \phi ^{(\lambda ,\nu )}$ with parameters
$ \lambda ,~ \nu $ occurred in \text{Theorem~\ref{Thm:thphi}} can be expanded
as
%
\begin{align}
\label{Eq:phixexp}
\begin{aligned}
\phi _{x}^{(\lambda ,\nu )} =&\tau ^{4}+\Big(
\frac{\lambda ^{q^{4}}}{1-\zeta ^{1-q}}+
\frac{\zeta ^{q-1}\lambda}{1-\zeta ^{q-1}}\Big)\tau ^{3}+\Big(
\frac{\nu T^{q(q-1)(1-\sigma )-\sigma}\lambda ^{q^{3}-q^{2}}}{\zeta ^{q}-\zeta}-
\frac{T \lambda ^{1-q}}{(\zeta ^{q}-\zeta )\nu }
\\
&+
\frac{T^{\sigma q-q}}{\zeta ^{q+1}\lambda ^{q^{2}+q}}
+\frac{\zeta ^{q-1}\lambda ^{q^{3}+1}}{(1-\zeta ^{q-1})^{2}}\Big)
\tau ^{2}+\Big(\frac{T^{\sigma q}+T^{\sigma}}{\zeta \lambda ^{q}}+
\frac{(T^{\sigma +q}+x)\lambda ^{q^{2}-q+1}}{(1-\zeta ^{1-q})\nu}
\Big)\tau +x,
\end{aligned}
\end{align}
and
%
\begin{align}
\label{Eq:phiyexp}
\begin{aligned}
\phi _{y}^{(\lambda ,\nu )}=&\zeta ^{q}\tau ^{4}+\zeta ^{q}\Big(
\frac{\lambda ^{q^{4}}}{1-\zeta ^{1-q}}+
\frac{\lambda}{1-\zeta ^{q-1}}\Big)\tau ^{3}+\zeta ^{q}\Big(
\frac{\nu T^{q(q-1)(1-\sigma )-\sigma}\lambda ^{q^{3}-q^{2}}}{\zeta ^{q}-\zeta}\\
&-
\frac{T \lambda ^{1-q}}{(\zeta ^{q}-\zeta )\nu }
+\frac{T^{\sigma q-q}}{\zeta ^{q+1}\lambda ^{q^{2}+q}}+
\frac{\lambda ^{q^{3}+1}}{(1-\zeta ^{q-1})^{2}}\Big)\tau ^{2}\\
&+\zeta ^{q}
\Big(
\frac{\zeta ^{q} T^{\sigma q}+t T^{\sigma}}{\zeta ^{q+1}\lambda ^{q}}+
\frac{(\zeta T^{\sigma +q}+y)\lambda ^{q^{2}-q+1}}{(\zeta ^{q}-\zeta )\nu }
\Big)\tau +y.\end{aligned}
\end{align}
%
\begin{defn}
\label{defn4.5}
For a fixed $ \nu $, the family
$ \{ \phi ^{( \lambda , \nu )} \}_{\lambda} $ above is called the complete
family of $ \zeta ^{q} $-type normalized Drinfeld modules of rank two.
\end{defn}

Next we investigate the family of the isomorphism classes of
$ \{ \phi ^{(\lambda ,\nu )} \}_{\lambda}$, which indicates the desired
family as in \text{Theorem~\ref{Thm:cfPhiJ}}. Let
$ \mathbb{F}^{*}: = \mathbb{F}_{q^{4}}^{*} / \mathbb{F}_{q}^{*} $ be a
quotient multiplicative group. We have the conjugate action of
$ \mathbb{F}^{*} $ on the set $\{ \phi ^{( \lambda , \nu )} \} $. Explicitly,
each element $ \ell \in \mathbb{F}^{*} $ sends $\phi $ to
$\ell ^{-1} \phi \, \ell $. Recall from the \text{Definition~\ref{Defn:isogeny}} that the conjugate action of each
$ \ell \in \mathbb{F}^{*} $ serves as an isomorphism in the sense of the
Drinfeld modules.

\begin{lem}%
\label{Lem:Dmiso}
The two Drinfeld modules $\phi ^{(\lambda ,\nu )}$ and
$\phi ^{(\lambda ', \nu ')}$ are isomorphism if and only if
%
\begin{equation}
\begin{cases}
&\lambda ^{q^{3}+q^{2}+q+1}= \lambda ^{\prime \,q^{3}+q^{2}+q+1}
\\
& \frac{\nu}{\lambda ^{q^{2}+1}}=\frac{\nu '}{\lambda ^{\prime \,q^{2}+1}}.
\end{cases}
\label{eq65}
\end{equation}
In particular, if $\nu =\nu '$ is fixed, then the isomorphism class of
$\phi $ depends on the invariant $ J = \lambda ^{q^{2}+1}$.
\end{lem}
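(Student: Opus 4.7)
The plan is to realize an isomorphism from $\phi^{(\lambda,\nu)}$ to $\phi^{(\lambda',\nu')}$ as a scalar $\ell \in \bar{K}^*$ satisfying $\phi^{(\lambda',\nu')}_a \, \ell = \ell \, \phi^{(\lambda,\nu)}_a$ for every $a \in \mathcal{A}$. Expanding this identity in $\bar{K}\{\tau\}$ via the relation $\tau^i c = c^{q^i}\tau^i$ turns it into the coefficient-wise equation $c'_i = c_i \, \ell^{1-q^i}$. Comparing the leading coefficients of $\phi_x$ (both equal to $1$ by the $\zeta^q$-type normalization) forces $\ell^{q^4-1}=1$, so $\ell \in \mathbb{F}_{q^4}^*$, and conversely any such $\ell$ preserves the normalized form.

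Next, I track how $(\lambda, \nu)$ transforms by appealing to the factorization $\phi_x = \tilde\phi_{I_\infty}\phi_{I_\infty}$ of Lemma~\ref{Lem:phi}. Pushing $\ell^{-1}$ past monomials using $\tau^k \ell^{-1} = \ell^{-q^k}\tau^k$ gives
\[
\ell\,\phi_{I_\infty}\,\ell^{-1} = \ell^{1-q^2}\bigl(\tau^2 + \alpha\,\ell^{q^2-q}\tau + \delta\,\ell^{q^2-1}\bigr),
\]
and by uniqueness of the monic degree-two right divisor the conjugated annihilator must be $\phi'_{I_\infty} = \tau^2 + \alpha\ell^{q^2-q}\tau + \delta\ell^{q^2-1}$; absorbing the residual scalar $\ell^{1-q^2}$ into $\tilde\phi_{I_\infty}$ and repeating the same computation with $\tilde\phi_{I_0}$ on $\phi_y$ yields $\tilde\alpha' = \tilde\alpha\,\ell^{1-q^3}$ and $\tilde\beta' = \tilde\beta\,\ell^{1-q^3}$. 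Since $\lambda = \tilde\beta - \tilde\alpha$ and $\delta = \nu\lambda^{q-1}$, this propagates to the transformation laws
\[
\lambda' = \lambda\,\ell^{1-q^3}, \qquad \nu' = \nu\,\ell^{q^4-q^3+q^2-q}.
\]

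Necessity of \eqref{eq65} is then an exponent bookkeeping modulo $q^4-1$: the identity $(1-q^3)(q^3+q^2+q+1) = -(q^4-1)(q^2+q+1)$ gives $\lambda'^{q^3+q^2+q+1} = \lambda^{q^3+q^2+q+1}$, while $(1-q^3)(q^2+1) \equiv q^4-q^3+q^2-q \pmod{q^4-1}$ gives $\nu'/\lambda'^{q^2+1} = \nu/\lambda^{q^2+1}$. For sufficiency, set $\eta = \lambda'/\lambda$; the first half of \eqref{eq65} makes $\eta$ a $(q^3+q^2+q+1)$-st root of unity, hence an element of $\mathbb{F}_{q^4}^*$. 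The map $\ell \mapsto \ell^{1-q^3}$ on $\mathbb{F}_{q^4}^*$ has image of order $(q^4-1)/\gcd(q^4-1,q^3-1) = q^3+q^2+q+1$, which is exactly the group of such roots of unity, so one can solve $\ell^{1-q^3} = \eta$ in $\mathbb{F}_{q^4}^*$. The second half of \eqref{eq65}, combined with the same exponent identity, then forces $\nu' = \nu\,\ell^{q^4-q^3+q^2-q}$ automatically, so this $\ell$ realizes the desired isomorphism. The final assertion follows from $q^3+q^2+q+1 = (q+1)(q^2+1)$: when $\nu=\nu'$ is fixed, the second relation reduces to $\lambda^{q^2+1} = \lambda'^{q^2+1}$, which implies the first, so the isomorphism class depends only on $J = \lambda^{q^2+1}$.

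The main subtlety is guaranteeing that \emph{the same} $\ell$ simultaneously realizes both parameter transformations; this rests on the arithmetic coincidence $(1-q^3)(q^2+1) \equiv q^4-q^3+q^2-q \pmod{q^4-1}$, which is precisely what selects $\nu/\lambda^{q^2+1}$ (rather than any other monomial in $\nu$ and $\lambda$) as the correct second invariant.
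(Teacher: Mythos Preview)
Your proof is correct and takes a genuinely different route from the paper's. The paper argues by substituting the fully expanded expressions \eqref{Eq:phixexp}--\eqref{Eq:phiyexp} for $\phi_x^{(\lambda,\nu)}$ and $\phi_y^{(\lambda,\nu)}$ into the conjugation identity $\phi^{(\lambda',\nu')}=\ell^{-1}\phi^{(\lambda,\nu)}\ell$ and comparing the $\tau^i$-coefficients one by one; it then asserts (without detailed justification) that the resulting system of coefficient identities is equivalent to the three conditions $\ell^{q-1}=\lambda^q/\lambda'^q$, $\nu/\lambda^{q^2+1}=\nu'/\lambda'^{q^2+1}$, and $\lambda^{q^3+q^2+q+1}=\lambda'^{q^3+q^2+q+1}$. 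You instead work structurally: you use the factorizations $\phi_x=\tilde\phi_{I_\infty}\phi_{I_\infty}$ and $\phi_y=\zeta^q\tilde\phi_{I_0}\phi_{I_\infty}$ from Lemma~\ref{Lem:phi}, observe that conjugation by $\ell$ sends each monic annihilator to the monic associate of its conjugate, and read off directly that $\delta'=\delta\ell^{q^2-1}$ and $\tilde\alpha',\tilde\beta'$ scale by $\ell^{1-q^3}$, whence $\lambda'=\lambda\ell^{1-q^3}$ and $\nu'=\nu\ell^{q^4-q^3+q^2-q}$. The exponent bookkeeping you do modulo $q^4-1$ then makes the equivalence with \eqref{eq65} transparent, and your surjectivity argument for $\ell\mapsto\ell^{1-q^3}$ on $\mathbb{F}_{q^4}^*$ cleanly handles sufficiency. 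Your approach has the advantage of never touching the lengthy expanded formulas and of explaining \emph{why} the particular combination $\nu/\lambda^{q^2+1}$ is the right invariant; the paper's approach, by contrast, is a brute-force verification that is easier to carry out mechanically once the expansions are in hand but leaves the equivalence step somewhat opaque.
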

\begin{proof}
From the \text{Definition~\ref{Defn:isogeny}}, $\phi ^{(\lambda ,\nu )}$ and
$\phi ^{(\lambda ', \nu ')}$ are isomorphic if and only if there exists
$\ell \in \mathbb{F}_{q^{4}}^{*}$, such that
\begin{equation*}
\phi ^{(\lambda ', \nu ')}=\ell ^{-1}\phi ^{(\lambda , \nu )}\ell ,
\end{equation*}
which is equivalent to
%
\begin{align}
\label{Eq:isphixy}
\begin{cases}
\phi _{x}^{(\lambda ', \nu ')}&=\ell ^{-1}\phi _{x}^{(\lambda , \nu )}
\ell
\\
\phi _{y}^{(\lambda ', \nu ')}&=\ell ^{-1}\phi _{y}^{(\lambda , \nu )}
\ell .
\end{cases}
\end{align}
Substituting Equation \text{\eqref{Eq:phixexp}} in the first formula of \text{\eqref{Eq:isphixy}} and comparing the coefficients of both sides, we obtain
the following conditions:
\begin{equation*}
\ell ^{q^{4}-1}=1;
\end{equation*}
%
\begin{align}
\label{co1}
&\frac{\lambda ^{\prime \,q^{4}}}{1-\zeta ^{1-q}}+
\frac{\lambda '}{1-\zeta ^{1-q}}=\ell ^{q^{3}-1}\Big(
\frac{\lambda ^{q^{4}}}{1-\zeta ^{1-q}}+
\frac{\lambda}{1-\zeta ^{1-q}}\Big);
\end{align}
%
\begin{align}
\label{Eq:co2}
&
\frac{\nu ' T^{q(q-1)(1-\sigma )-\sigma}\lambda ^{\prime \,q^{3}-q^{2}}}{\zeta ^{q}-\zeta}-
\frac{T \lambda ^{\prime \,1-q}}{(\zeta ^{q}-\zeta )\nu ' }+
\frac{T^{\sigma q-q}}{\zeta ^{q+1}\lambda ^{\prime \,q^{2}+q}}+
\frac{\zeta ^{q-1}\lambda ^{\prime \,q^{3}+1}}{(1-\zeta ^{q-1})^{2}}
\\
\nonumber
=&\ell ^{q^{2}-1}\Big(
\frac{\nu T^{q(q-1)(1-\sigma )-\sigma}\lambda ^{q^{3}-q^{2}}}{\zeta ^{q}-\zeta}-
\frac{T \lambda ^{1-q}}{(\zeta ^{q}-\zeta )\nu }+
\frac{T^{\sigma q-q}}{\zeta ^{q+1}\lambda ^{q^{2}+q}}+
\frac{\zeta ^{q-1}\lambda ^{q^{3}+1}}{(1-\zeta ^{q-1})^{2}}\Big);
\end{align}
and
%
\begin{align}
\label{Eq:co3}
\frac{T^{\sigma q}+T^{\sigma}}{\zeta \lambda ^{\prime \,q}}+
\frac{(T^{\sigma +q}+x)\lambda ^{\prime \,q^{2}-q+1}}{(1-\zeta ^{1-q})\nu '}=
\ell ^{q-1}\Big(\frac{T^{\sigma q}+T^{\sigma}}{\zeta \lambda ^{q}}+
\frac{(T^{\sigma +q}+x)\lambda ^{q^{2}-q+1}}{(1-\zeta ^{1-q})\nu}
\Big).
\end{align}
These conditions are equivalent to
%
\begin{align}
\ell ^{q-1}&=\frac{\lambda ^{q}}{\lambda ^{\prime \,q}};
\label{Eq:con1}
\\
\frac{\nu}{\lambda ^{q^{2}+1}}&=\frac{\nu '}{\lambda ^{\prime \,q^{2}+1}};
\label{Eq:con2}
\\
\lambda ^{q^{3}+q^{2}+q+1}&= \lambda ^{\prime \,q^{3}+q^{2}+q+1}.
\label{Eq:con3}
\end{align}
In the same fashion, combining Equation \text{\eqref{Eq:phiyexp}} with the second
formula of Equations \text{\eqref{Eq:isphixy}}, we have the same conditions \text{\eqref{Eq:con1}}-\text{\eqref{Eq:con3}}.
\end{proof}

The invariant $J $ in the \text{Lemma~\ref{Lem:Dmiso}} is called the $ J $-invariant
of $ \phi ^{(\lambda , \nu )} $. Set $ \ell = \lambda ^{q/(q-1)} $. Then
$ \ell $ induces an isomorphism from $ \phi ^{(\lambda , \nu )} $ to
$ \Phi ^{J} $, which is represented by
$ \Phi _{x}^{J},\, \Phi _{y}^{J} $ in Equations \text{\eqref{Eq:PhiJx}} and \text{\eqref{Eq:PhiJy}}. The family $\{\Phi ^{J}\}_{J}$ is completely determined
by the $ J $-invariant and includes all isomorphism classes of rank two
Drinfeld modules over $\bar K $, where the case of elliptic modular curves
over complex field is analogous. Combining with \text{Theorem~\ref{Thm:thphi}}, we complete the proof of \text{Theorem~\ref{Thm:cfPhiJ}}.

\subsection{Example}
\label{sec4.3}

In this example, we produce a special Drinfeld module over
$ \mathbb{F}_{q} = \mathbb{F}_{2} $ of rank two. Let
$\pi := t^{2} + t +1$ be the unique irreducible polynomial of degree two.
Let $ \zeta , ~\zeta +1 $ denote the roots of $ \pi $. The domain
$ \mathcal{A} $ is constructed by the relation
\begin{equation*}
y^{2} + xy + x^{2} + x = 0 .
\end{equation*}
Let $ \sqrt{t} $ be a square root of $ t $, and set
\begin{equation*}
\sqrt{x} := \frac{1}{t+1+\sqrt{t}}=
\frac{1}{(\sqrt{t}-\zeta )(\sqrt{t}-\zeta ^{2})}
\end{equation*}
and
\begin{equation*}
\sqrt{y} := \sqrt{t}\cdot \sqrt{x}.
\end{equation*}
It is clear that $ \sqrt{x}$ and $\sqrt{y}$, respectively, are square roots
of $x $ and $ y $. Let $ \psi $ be the Hayes module as in \text{Proposition~\ref{Prop:HM}} replacing $x$ and $y$ by $\sqrt{x}$ and $\sqrt{y}$ respectively.
That is
%
\begin{align}
\label{Eq:expsi}
\begin{cases}
\psi _{\sqrt{x}} = \tau ^{2} +\frac{\sqrt{x}+u^{3}}{u} \tau + \sqrt{x}
\\
\psi _{\sqrt{y}} = \zeta \tau ^{2} +\frac{\sqrt{y} +u^{3}\zeta }{u}
\tau + \sqrt{y},
\end{cases}
\end{align}
where $u$ is one of cubic roots of
$\frac{1}{(\sqrt{t}-\zeta )(t - \zeta )}$. Then the following pair forms
a $\zeta ^{q}$-type rank two Drinfeld module:
%
\begin{align}
\label{Eq:exphi}
\begin{cases}
\phi _{x} = \psi _{\sqrt{x}} \psi _{\sqrt{x}}
\\
\phi _{y} = \psi _{\sqrt{y}} \psi _{\sqrt{y}}.
\end{cases}
\end{align}
In the rest, we would like to show that $ \phi $ coincides with the Drinfeld
module $ \phi ^{(\lambda , \nu )} $ with
%
\begin{align}
\label{Eq:excmula}
\nu = \frac{ u }{ \zeta (\sqrt{t}- \zeta )}, ~\qquad ~~~~~~\qquad ~
\lambda = \frac{ \zeta }{ u (\sqrt{t}- \zeta ^{2})} .
\end{align}
One may directly show that $ \nu $ is a cubic root of
$ - T^{\sigma +2 }$. Substituting $ \psi _{\sqrt{y}}$ (see \text{Definition~\ref{Defn:psiu}}) into the expression of $ \phi _{y} $ in Equations \text{\eqref{Eq:exphi}}, we have
\begin{align*}
\phi _{y} & = (\zeta \tau - \frac{\sqrt{y}}{u})(\tau - u) (\zeta
\tau - \frac{\sqrt{y}}{u})(\tau - u)
\\
& = \zeta ^{2} ( \tau ^{2} + (\frac{\sqrt{y}}{u}+ \zeta ^{2} t u^{2} )
\tau + \frac{ \zeta y}{\sqrt{x}}) \psi _{\sqrt{x}}.
\end{align*}
We discover that
%
\begin{align}
\label{Eq:exphiIinf}
\phi _{I_{\infty}}= \tilde{\phi}_{I_{\infty}} = \psi _{\sqrt{x}}=
\tau ^{2} +\frac{\sqrt{x}+u^{3}}{u} \tau + \sqrt{x}
\end{align}
and
%
\begin{align}
\label{Eq:extphiI0}
\tilde{\phi}_{I_{0}} = \tau ^{2} + (\frac{\sqrt{y}}{u}+ \zeta ^{2} t u^{2}
) \tau + \frac{ \zeta y}{\sqrt{x}} .
\end{align}
Using the same notations as in Equations \text{\eqref{Eq:tphiiinf}} and \text{\eqref{Eq:tphii0}}, we let
%
\begin{align}
\tilde{\alpha} :=& \frac{\sqrt{x}+u^{3}}{u} ;
\label{eq78}
\\
\tilde{\beta} := & \frac{1}{u} ( \sqrt{y} +\zeta ^{2} t u^{3} );
\label{eq79}
\\
\delta := & \sqrt{x} =
\frac{1}{(\sqrt{t}-\zeta )(\sqrt{t}-\zeta ^{2})}.
\label{eq80}
\end{align}
A simple calculation yields
\begin{equation*}
\lambda := \tilde{\beta} -\tilde{\alpha} =
\frac{ \zeta }{ u (\sqrt{t}- \zeta ^{2})} .
\end{equation*}
Then it follows from $ \delta = \nu \lambda $ that
\begin{equation*}
\nu = \frac{ u }{ \zeta (\sqrt{t}- \zeta )}.
\end{equation*}
So the $ J $-invariant of $ \phi $ is given by
\begin{equation*}
J (\phi ) = \lambda ^{5} =
\frac{\nu (\sqrt{t} - \zeta )^{3}}{\sqrt{t} - \zeta ^{2}}.
\end{equation*}
%

\section{Weil pairing}
\label{Sec:WP}

In this section, we would like to represent the Weil pairing of rank
$r$ Drinfeld modules by a so-called Drinfeld-Moore product. The principle
technique is to derive the Weil operator associated to the ideal $I$ of
$\mathcal{A}$, which is essentially constructed by the basis and the dual
basis of $\mathcal{A}/I$. An explicit formula for Weil pairing of the rank
two Drinfeld modules is demonstrated by employing the concrete expression
of the rank two Weil operators over $\mathcal{A}$.

\subsection{Dual basis}
\label{Sec:DB}

Now we compute the basis and the dual basis of $ \mathcal{A}/I $. Following
the approach in \cite{vdHGJ04}, we represent the dual basis by means of
torsion differential module.

Let $ \Omega _{\mathcal{A}} $ be the differential module of
$ \mathcal{A} $. Since the differential of $ \rho (x,y) $ in Equation \text{\eqref{Eq:rho}} is given by
\begin{equation*}
2 y dy - \mathrm{Tr}(\zeta ) (ydx+xdy ) + (2 \zeta ^{q+1} x - 1 ) dx ,
\end{equation*}
it follows that $ \Omega _{\mathcal{A}} $ is isomorphic to
\begin{equation*}
\mathcal A d x \oplus \mathcal A dy / \big( 2 y dy - \mathrm{Tr}(
\zeta ) (ydx+xdy ) + (2 \zeta ^{q+1} x - 1 ) dx \big),
\end{equation*}
as an $\mathcal{A}$-module. Set $\omega _{\pi}:=\frac{dt }{\pi} $. One
may check that $\omega _{\pi}$ is the generator of
$ \Omega _{\mathcal{A}}$, namely,
\begin{equation*}
\Omega _{\mathcal{A}} \cong \mathcal{A} \cdot \omega _{\pi}.
\end{equation*}
Now we fix the ideal $ I $ of $ \mathcal{A}$. Let
$ I^{-1}\Omega _{\mathcal{A}}$ be a fractional module in
$K \otimes _{\mathcal{A}} \Omega _{\mathcal{A}} $ and
$ \Omega _{I} := I^{-1} \Omega _{\mathcal{A}} / \Omega _{\mathcal{A}} $
the correspondent quotient module.

\begin{notation}
Denote by $ \operatorname{Res}_{\zeta}(\eta ) $ the residue number of
$ \eta $ at the zero of $ t-\zeta $ for a differential
$ \eta \in K \otimes _{\mathcal{A}} \Omega _{\mathcal{A}} = K\cdot dt $.
By abuse of language, we make use of the notation
$ \operatorname{Res}_{\zeta}(\omega ) := \operatorname{Res}_{\zeta}(1
\otimes \omega ) $ for $ \omega \in \Omega _{\mathcal{A}}$.
\end{notation}

\begin{lem}%
\label{Lem:pp}
There exists a perfect pairing
\begin{equation*}
\langle \cdot ,\cdot \rangle : ~ \mathcal{A}/ I \otimes _{\mathbb F_{q}}
\Omega _{I} \to \mathbb{F}_{q}.
\end{equation*}
Therefore, the dual space of $\mathcal{A}/I$ is isomorphic to
$\Omega _{I}$.
\end{lem}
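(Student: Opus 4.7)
The plan is to realize the pairing via residues at the primes dividing $I$ and to reduce perfectness to a local calculation at each such prime.

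First I would set
$$\langle a,\omega\rangle \;:=\; \sum_{\mathfrak{p}\in V(I)}\operatorname{Tr}_{\kappa(\mathfrak{p})/\mathbb{F}_q}\operatorname{Res}_{\mathfrak{p}}\bigl(\tilde a\,\tilde\omega\bigr),$$
where $\tilde a\in\mathcal{A}$ and $\tilde\omega\in I^{-1}\Omega_{\mathcal{A}}$ are arbitrary lifts of $a$ and $\omega$, and $\kappa(\mathfrak{p})$ denotes the residue field at the prime $\mathfrak{p}$. (The previously introduced $\operatorname{Res}_\zeta$ is the restriction of this construction to a particular geometric point above $P_\pi$; the infinite place itself does not appear, since $\mathfrak{p}\in V(I)$ forces $\mathfrak{p}\subseteq\mathcal{A}$.) Well-definedness is immediate: if either $\tilde a\in I$ or $\tilde\omega\in\Omega_{\mathcal{A}}$, then $\tilde a\tilde\omega\in\Omega_{\mathcal{A}}$ is regular at every prime of $V(I)$, so all residues vanish.

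Second, both $\mathbb{F}_q$-vector spaces have the same dimension $\deg(I)=\sum_{\mathfrak{p}}v_{\mathfrak{p}}(I)[\kappa(\mathfrak{p}):\mathbb{F}_q]$: for $\mathcal{A}/I$ this is the standard Dedekind-quotient formula, and for $\Omega_I$ this holds because $\Omega_{\mathcal{A}}$ is an invertible $\mathcal{A}$-module. Hence it suffices to establish non-degeneracy on one side.

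Third, by the Chinese Remainder Theorem the pairing splits as a direct sum of local pairings
$$\mathcal{A}_{\mathfrak{p}}/\mathfrak{p}^n \;\times\; \mathfrak{p}^{-n}\Omega_{\mathcal{A}_{\mathfrak{p}}}/\Omega_{\mathcal{A}_{\mathfrak{p}}} \;\longrightarrow\; \mathbb{F}_q,\qquad (a,\omega)\mapsto \operatorname{Tr}_{\kappa(\mathfrak{p})/\mathbb{F}_q}\operatorname{Res}_{\mathfrak{p}}(a\omega),$$
one for each $\mathfrak{p}\mid I$ with $n=v_{\mathfrak{p}}(I)$. Given $0\neq a\in\mathcal{A}_{\mathfrak{p}}/\mathfrak{p}^n$, I would pick a uniformizer $\varpi$ and write $a=\varpi^k u$ with $u$ a unit and $k<n$. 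Setting $\omega = c\,\varpi^{-k-1}d\varpi$ for a scalar $c\in\kappa(\mathfrak{p})$ to be determined, a direct computation gives $a\omega = uc\,\varpi^{-1}d\varpi + (\text{regular})$, so $\operatorname{Res}_{\mathfrak{p}}(a\omega)=\bar u\bar c\in\kappa(\mathfrak{p})$. Because the trace form on $\kappa(\mathfrak{p})/\mathbb{F}_q$ is itself non-degenerate, one may choose $c$ with $\operatorname{Tr}_{\kappa(\mathfrak{p})/\mathbb{F}_q}(\bar u\bar c)\neq 0$, forcing $\langle a,\omega\rangle\neq 0$.

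The main obstacle is the scalar-extension step: the residue at $\mathfrak{p}$ is naturally valued in $\kappa(\mathfrak{p})$, and because primes of $\mathcal{A}$ can have residue field strictly larger than $\mathbb{F}_q$ (in particular $\mathbb{F}_{q^2}$, corresponding to pairs of geometric points such as $t=\zeta$ and $t=\zeta^q$), one must interpose $\operatorname{Tr}_{\kappa(\mathfrak{p})/\mathbb{F}_q}$ to land in $\mathbb{F}_q$. The key point is that both the residue pairing on the local module and the trace form on $\kappa(\mathfrak{p})$ are separately non-degenerate, so their composition is as well, and the resulting pairing is automatically Galois-equivariant, which makes it compatible with the $\zeta$-type / $\zeta^q$-type dichotomy already present in our setting.
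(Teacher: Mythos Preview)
Your argument is correct, but it is organized differently from the paper's. You place the pairing at the \emph{finite} primes $\mathfrak{p}\in V(I)$ and sum $\operatorname{Tr}_{\kappa(\mathfrak{p})/\mathbb{F}_q}\operatorname{Res}_{\mathfrak{p}}(a\omega)$, then establish non-degeneracy by a clean local computation via CRT. The paper instead evaluates the residue at the single geometric point $t=\zeta$ above the \emph{infinite} place $P_\pi$ and then traces $\mathbb{F}_q(\zeta)\to\mathbb{F}_q$. Since $\operatorname{Tr}\operatorname{Res}_\zeta$ is exactly the total residue at $P_\pi$, the residue theorem shows the two pairings agree up to a global sign. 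Your version is the standard Dedekind-domain argument and works uniformly for any $\mathcal{A}$; the paper's version is tailored to this curve and pays off later, because the explicit dual-basis computations in Proposition~\ref{Prop:Thbd} and Remark~\ref{Rem:res} are carried out directly with $\operatorname{Res}_\zeta$ at the $\pi$-adic place rather than prime-by-prime along $V(I)$. One small correction: your parenthetical that ``$\operatorname{Res}_\zeta$ is the restriction of this construction to a particular geometric point above $P_\pi$'' is misleading---$\operatorname{Res}_\zeta$ sits at infinity, hence is \emph{not} a term of your sum but rather (minus) the whole sum, via the residue theorem.
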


\begin{proof}
Indeed this result is well-known, so we only sketch a proof. It is easy
to verify that the residue map
\begin{equation*}
\operatorname{Res}_{\zeta}: ~ \mathcal{A}\otimes _{\mathbb F_{q}} I^{-1}
\Omega _{I} \to \mathbb{F}_{q}(\zeta )
\end{equation*}
induces a non-degenerate pairing
\begin{equation*}
\operatorname{Res}_{\zeta}: ~ \mathcal{A}/I \otimes _{\mathbb F_{q}}
\Omega _{I} \to \mathbb{F}_{q}(\zeta ).
\end{equation*}
The composition with the trace map
$\mathrm{Tr}: {\mathbb F_{q}}(\zeta ) \to {\mathbb F_{q}} $ gives a perfect
pairing.
\end{proof}

Additionally, we assume that $ I $ is principle and generated by the element
%
\begin{equation}
\label{Eq:P}
P(x,y) = \alpha _{d}+ \sum _{j=0}^{d-1} \Big(\alpha _{j} x^{d-j} +
\beta _{j} y x^{d-j-1}\Big),
\end{equation}
where the coefficients
$ \alpha _{j} ,~ \beta _{j}, ~\alpha _{d} \in \mathbb{F}_{q} $ for
$ j = 0 , \ldots , d-1 $. Then the differential
%
\begin{equation}
\label{Eq:OmstarP}
\omega _{*} := \frac{1}{P(t)} \omega _{\pi }%
\end{equation}
serves as a generator of $ I^{-1} \Omega _{\mathcal{A}}$. It is natural
to choose the bases $\mathcal{B}_{1} $ and $ \mathcal{B}_{2}$ of
$ \mathcal{A}/ I $, respectively, in the following two cases. For
$ \alpha _{0} \not = 0 $, we choose
\begin{equation*}
\mathcal{B}_{1} = \left \{ 1,\, y,\, x,\, yx,\, x^{2} ,\, \ldots , \, y
x^{d-2},\, x^{d-1},\, y x^{d-1} \right \} ;
\end{equation*}
and for $ \beta _{0} \not = 0 $, set
\begin{equation*}
\mathcal{B}_{2} = \left \{ 1,\, y,\, x, \, yx ,\, \ldots , \, yx^{d-2},
\, x^{d-1},\, x^{d} \right \} .
\end{equation*}
Notice that the only differences of two bases are exactly the last terms.

\begin{notation}%
\label{No:dbwv}
\begin{enumerate}
\item For determining the dual basis of $ \mathcal B_{1} $, we introduce
the polynomials $ w(e)$ in $ \mathbb{F}_{q}[x,y] $ generated by
$\mathcal B_{1}$, where the label $ e $ ranges over
$ \mathcal{B}_{1} $ and splits into the following four types.
\begin{enumerate}[Type 1:]
\item[Type 1:] $e=x^{d-1}y$,
%
\begin{align}
\label{Eq:wxy}
w(x^{d-1}y ) =
\frac {(\beta _{0}\zeta + \alpha _{0} )(\beta _{0} \zeta ^{q} + \alpha _{0} )}{\alpha _{0}};
\end{align}
\item[Type 2:] $e=x^{d-1-j} y$ for $ j=1,\ldots , d-1$,
%
\begin{align}
\label{Eq:wxjy}
w(x^{d-1-j} y) = \sum _{k=0}^{j-1} (\beta _{k} x^{j-k-1} y + \alpha _{k}
x^{j-k})+ \Big ( \alpha _{j}+ (\mathrm{Tr}\zeta +
\frac{\beta _{0}}{\alpha _{0}}\zeta ^{q+1}) \beta _{j} \Big);
\end{align}
\item[Type 3:] $e=x^{d-j}$ for $ j=1,\ldots , d-1$,
%
\begin{align}
\label{Eq:wxj}
\nonumber
w(x^{d-j}) =& \sum _{k=0}^{j-1} ( (x^{j-1-k}y-\mathrm{Tr}\zeta x^{j-k})
\alpha _{k} + x^{j-k}\beta _{k-1} -x^{j-k} \zeta ^{q+1}\beta _{k} )
\\
& + \Big ( \beta _{j-1}- \zeta ^{q+1}\beta _{j} +
\frac{\alpha _{j}}{\alpha _{0}} \zeta ^{q+1} \beta _{0} \Big);
\end{align}
\item[Type 4:] $e=1$,
%
\begin{align}
\label{Eq:w1}
\begin{aligned}
w(1) = & \sum _{k=1}^{d-1} \biggl ( \big( (\mathrm{Tr}\zeta +
\frac{\beta _{0} \zeta ^{q+1}}{\alpha _{0} } ) \beta _{k} + \alpha _{k}
\big) x^{d-k-1}y + (\frac{\beta _{0} \zeta ^{q+1}}{\alpha _{0} }
\alpha _{k} + \beta _{k-1}\\
& - \zeta ^{q+1} \beta _{k}) x^{d-k} \biggr )
 +\left ( \beta _{d-1} + \mathrm{Tr}(\zeta ) \alpha _{d} + 2 \zeta ^{q+1}
\frac{\alpha _{d}}{\alpha _{0}} \beta _{0} \right )\\
& + ( (\mathrm{Tr}
\zeta + \frac{\beta _{0} \zeta ^{q+1}}{\alpha _{0} } ) \beta _{0} +
\alpha _{0}) x^{d-1}y.
\end{aligned}
\end{align}
\end{enumerate}
\item Similarly, in order to calculate the dual basis of
$ \mathcal B_{2}$, we list $v(e)\in \mathbb{F}_{q}[x,y] $ for $ e $ belonging
to the following four types.
\begin{enumerate}[Type 1:]
\item[Type 1:] $e=x^{d}$,
%
\begin{align}
\label{Eq:vxd}
v(x^{d} ) = -
\frac{(\beta _{0}\zeta + \alpha _{0} )(\beta _{0} \zeta ^{q} + \alpha _{0} )}{\beta _{0} };
\end{align}
\item[Type 2:] $e=x^{d-1-j}y$ for $j=1,\ldots , d-1 $,
%
\begin{align}
\label{Eq:vxjy}
v(x^{d-1-j} y ) = \sum _{k=0}^{j-1} (\beta _{k} yx^{j-k-1} + \alpha _{k}
x^{j-k} ) + \Big ( \alpha _{j}-
\frac{\alpha _{0} \beta _{j} }{\beta _{0}} \Big);
\end{align}
\item[Type 3:] $e=x^{d-j}$ for $j=1,\ldots , d-1$,
%
\begin{align}
\label{Eq:vxj}
\nonumber
v(x^{d-j})&= \sum _{k=0}^{j-1} ( (yx^{j-k-1}-\mathrm{Tr}(\zeta ) x^{j-k})
\alpha _{k} + \beta _{k-1}x^{j-k} - \zeta ^{q+1}\beta _{k} x^{j-k} )
\\
& + \Big ( \beta _{j-1}- \zeta ^{q+1}\beta _{j} - \big(\mathrm{Tr}
\zeta +\frac{\alpha _{0}}{\beta _{0}}\big) \alpha _{j} \Big);
\end{align}
\item[Type 4:] $e=1$,
%
\begin{align}
\label{Eq:v1}
\nonumber
v(1) =& \sum _{k=1}^{d-1} \left ( (\alpha _{k} -
\frac{\alpha _{0}}{\beta _{0} } \beta _{k} ) x^{d-k-1}y -\Big((
\mathrm{Tr}\zeta +\frac{\alpha _{0}}{\beta _{0} }) \alpha _{k} -
\beta _{k-1} + \zeta ^{q+1} \beta _{k}\Big) x^{d-k}\! \right )
\\
& +\Big( - \mathrm{Tr}\zeta \alpha _{d} + \beta _{d-1} - 2
\frac{\alpha _{0}\alpha _{d}}{ \beta _{0}} \Big)-\Big((\mathrm{Tr}
\zeta +\frac{\alpha _{0}}{\beta _{0} }) \alpha _{0} + \zeta ^{q+1}
\beta _{0}\Big) x^{d}.
\end{align}
\end{enumerate}
\end{enumerate}
\end{notation}

The facts concerning residue numbers below would be very useful.

\begin{remark}%
\label{Rem:res}
Let $G(t)\in \mathbb F_{q}[t]$ be a polynomial. If
$ \deg (G(t)) \leqslant 2 j $, then the differential
$\frac{G(t)}{\pi ^{j}}\omega _{\pi}$ is regular except for the place
$ P_{\pi }$. It follows from the residue theorem that
%
\begin{align}
\label{Eq:resGom1}
\mathrm{Tr}\operatorname{Res}_{\zeta}\frac {G(t)} {\pi ^{j}} \cdot
\omega _{\pi }= 0 \quad \text{when $\deg (G(t)) \leqslant 2 j$}.
\end{align}
The differential form $G(t)\pi ^{k}\omega _{\pi}$ (with
$k\geqslant 1$) is regular at $P_{\pi}$, so
%
\begin{align}
\label{Eq:resGom2}
\mathrm{Tr}\operatorname{Res}_{\zeta}G(t)\pi ^{k} \cdot \omega _{\pi }=
0 \quad \text{when $k\geqslant 1$}.
\end{align}
Furthermore, it is fairly easy to verify that
%
\begin{align}
\label{Eq:res3}
\mathrm{Tr}\operatorname{Res}_{\zeta} (a+bt+c t^{2})\omega _{\pi }= b +
c\mathrm{Tr}\zeta .
\end{align}
\end{remark}

Now we compute the dual bases for the both cases by applying the perfect
pairing in \text{Lemma~\ref{Lem:pp}}.

\begin{prop}%
\label{Prop:Thbd}
As above, we let $\mathcal B_{1} $ be the basis of $\mathcal{A}/I$ when
$\alpha _{0}\neq 0$ together with polynomials $w(e)$ for
$e\in \mathcal{B}_{1}$. Then the dual basis of $\mathcal{B}_{1}$ is represented
by the set
\begin{equation*}
\left \{ w(e)\cdot \omega _{*} | e \in \mathcal B_{1} \right \}.
\end{equation*}
Furthermore, for $ \beta _{0} \not = 0 $, there exists an analogous dual
basis for the second basis $ \mathcal B_{2} $ in terms of polynomials
$ v(e) $ with $e\in \mathcal{B}_{2}$.
\end{prop}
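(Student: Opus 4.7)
The plan is as follows. By Lemma~\ref{Lem:pp}, the $\mathbb{F}_q$-bilinear form $\langle e, \eta\rangle := \mathrm{Tr}\,\operatorname{Res}_\zeta(e\eta)$ is a perfect pairing between $\mathcal{A}/I$ and $\Omega_I$. So it suffices to verify the orthogonality relations $\langle e, w(e')\omega_*\rangle = \delta_{e,e'}$ for all $e, e' \in \mathcal{B}_1$, and, symmetrically, the analogous relations for the pair $(\mathcal{B}_2, v(e))$ in the case $\beta_0 \neq 0$.

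The strategy is to translate each such pairing into an explicit residue computation in the variable $t$. Every basis monomial $x^i y^j$ with $j\in \{0,1\}$ becomes $t^j/\pi^{i+j}$ under $x = 1/\pi$, $y = t/\pi$, and $\omega_* = \pi^{d-1}\,dt/Q(t)$ once we clear denominators by writing $P = Q(t)/\pi^d$ with $\deg Q\leqslant 2d$. Thus each product $e\cdot w(e')\cdot \omega_*$ becomes a concrete rational differential in $t$, whose trace-residue at $\zeta$ is controlled by the three rules of Remark~\ref{Rem:res}: it vanishes unless the differential reduces, modulo $\Omega_\mathcal{A}$, to $(a+bt+ct^2)\omega_\pi$, in which case it equals $b + c\,\mathrm{Tr}(\zeta)$ by \eqref{Eq:res3}. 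The polynomials $w(e)$ in Notation~\ref{No:dbwv} are engineered so that, after reducing $e\cdot w(e')$ modulo $\rho$ and modulo the generator $P$ of $I$, the resulting differential is a sum of pieces of the first two (trace-zero) types, plus a quadratic piece whose coefficient equals $\delta_{e,e'}$.

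I would then split the verification by the type of $e'$. The Type~1 pairing is easiest: $w(x^{d-1}y)$ is the scalar \eqref{Eq:wxy}, and the only nontrivial entry $\langle x^{d-1}y, w(x^{d-1}y)\omega_*\rangle$ amounts to a single residue of $(t/\pi^d)\cdot\omega_*$ normalized by that constant to give $1$, while all off-diagonal entries vanish by a pole-order count via \eqref{Eq:resGom1}. The Type~2 and Type~3 pairings are handled by a telescoping cancellation: the sums appearing in \eqref{Eq:wxjy} and \eqref{Eq:wxj} are exactly those obtained by formally dividing $P(x,y)$ through by the leading monomial of $e'$, so modulo $I$ the product $e\cdot w(e')$ collapses either to a regular function (hence to $0$ in $\Omega_I$) or to the canonical quadratic piece. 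The main obstacle will be the Type~4 diagonal pairing $\langle 1, w(1)\omega_*\rangle$, where $w(1)$ has the largest pole order and involves the constant term $\alpha_d$; there the relation $P\equiv 0\pmod I$ must be invoked to rewrite $\alpha_d$ in terms of the remaining monomials, and the delicate combination in \eqref{Eq:w1} is designed precisely so that after this substitution and one further application of $\rho(x,y)=0$ the differential collapses to the quadratic form with coefficient $1$. The case of $\mathcal{B}_2$ and $v(e)$ follows by the same argument with the roles of the leading coefficients $\alpha_0$ and $\beta_0$ interchanged.
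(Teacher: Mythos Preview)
Your proposal is correct and takes essentially the same approach as the paper: reduce to the orthogonality relations $\langle f, w(e)\omega_*\rangle = \delta(e,f)$ via the residue identities of Remark~\ref{Rem:res}, splitting into the four types of $e$ listed in Notation~\ref{No:dbwv}. The paper's only organizational twist is that for Types~2--4 it constructs the dual element indirectly, by seeking a polynomial $G_j(t)$ (resp.\ $H_j(t)$, $I_d(t)$) of bounded $t$-degree whose quotient $G_j/F$ has a prescribed $\pi$-adic expansion to order $j$ (resp.\ $j$, $d$), and then identifying $G_j/\pi^j$ with $w(e)$; this makes all the residues fall out immediately from \eqref{Eq:resGom1}--\eqref{Eq:res3} rather than requiring your reduction modulo $P$ and $\rho$.
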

\begin{proof}
Since the second assertion can be shown in the same way, we only present
the proof for the first assertion. For each
$e,\, f \in \mathcal{B}_{1}$, denote the Kronecker delta symbol by
\begin{equation*}
\delta (e,f) =
\begin{cases}
0 , & e \not = f
\\
1, & e = f.
\end{cases}
\end{equation*}
The proposition asserts the equality
%
\begin{equation}
\label{Eq:delef}
\delta (e,f) = \left \langle{ f, w(e) \omega _{*}} \right \rangle
\quad \text{for $e,\, f\in \mathcal{B}_{1}$} .
\end{equation}
We split the procedure for verifying the equality \text{\eqref{Eq:delef}} into
four cases depending on the types listed in \text{Notation~\ref{No:dbwv}}.

Case 1: $e = x^{d-1} y $.

First we let $F(t)=P(x,y)\pi ^{d} $, i.e.
%
\begin{align}
\label{Eq:Ft}
F(t) = (\beta _{0} t + \alpha _{0}) + (\beta _{1} t + \alpha _{1})
\pi + \cdots + (\beta _{d-1} t + \alpha _{d-1} ) \pi ^{d-1} +\alpha _{d}
\pi ^{d}.
\end{align}
Clearly, $F(t)$ is of degree $ \leqslant 2 d $ and coprime with
$ \pi (t) $. The generator $\omega _{*}$ of $\Omega _{I}$ in Equation \text{\eqref{Eq:OmstarP}} is now rewritten as
%
\begin{align}
\label{Eq:omstarF}
\omega _{*} =\frac{\pi ^{d}}{F(t)} \omega _{\pi}.
\end{align}
From Equation \text{\eqref{Eq:resGom2}} in \text{Remark~\ref{Rem:res}}, we obtain
%
\begin{align}
\label{Eq:Thresxj}
\left \langle{x^{d-1-j}y, \omega _{*} } \right \rangle = \mathrm{Tr}
\operatorname{Res}_{\zeta} \frac{t \pi ^{j}}{F(t)} \omega _{\pi }= 0
\quad \text{ for $ 1\leqslant j \leqslant d-1$}.
\end{align}
Applying Equation \text{\eqref{Eq:resGom2}} again yields
%
\begin{align}
\label{Eq:Thresxk}
\left \langle{x^{d-k}, \omega _{*} } \right \rangle = \mathrm{Tr}
\operatorname{Res}_{\zeta} \frac{\pi ^{k}}{F(t)}\omega _{\pi }= 0
\quad \text{for $ 1 \leqslant k \leqslant d$}.
\end{align}
It follows from Equation \text{\eqref{Eq:res3}} that
%
\begin{align}
\label{Eq:Thresxy}
\left \langle{x^{d-1}y ,
\frac{(\beta _{0}\zeta + \alpha _{0} )(\beta _{0} \zeta ^{q} + \alpha _{0} )}{\alpha _{0} }
\omega _{*} } \right \rangle =
\frac{(\beta _{0}\zeta + \alpha _{0} )(\beta _{0} \zeta ^{q} + \alpha _{0} )}{\alpha _{0} }
\mathrm{Tr}\operatorname{Res}_{\zeta} \frac{t}{F(t)} \omega _{\pi }= 1.
\end{align}
Therefore, Equations \text{\eqref{Eq:Thresxj}}-\text{\eqref{Eq:Thresxy}} confirm the
equality \text{\eqref{Eq:delef}} for the case $e=x^{d-1}y$.

Case 2: $e= x^{d-1-j}y$, for $j=1\,, \ldots ,\, d-1$.

We would like to produce a series of polynomials
$ \{ G_{j}(t)\}_{1 \leqslant j \leqslant d-1}$ of degree
$\deg (G_{j}(t)) \leqslant 2j$ satisfying the following $\pi $-expansion
condition
%
\begin{align}
\label{Eq:Gjt1}
\frac{G_{j}(t)}{F(t)} = 1 + (t-\mathrm{Tr}{\zeta})g_{j} \pi ^{j} + O(
\pi ^{j+1}),
\end{align}
where $O(\pi ^{j+1})$ denotes the higher order term and
$g_{j}\in \mathbb F_{q}$ needs to be determined.

Substituting expression of $F(t)$ (see Equation \text{\eqref{Eq:Ft}}) in
$tF(t)$ yields
%
\begin{align}
\label{Eq:tFt}
\nonumber
tF(t) =& \Big ( (\beta _{0} \mathrm{Tr}\zeta + \alpha _{0}) t -
\zeta ^{q+1} \beta _{0} \Big) + \Big ( (\beta _{1} \mathrm{Tr}\zeta +
\alpha _{1}) t + \beta _{0} - \zeta ^{q+1} \beta _{1} \Big) \pi +
\cdots
\\
& + \Big ( (\beta _{d-1} \mathrm{Tr}\zeta + \alpha _{d-1}) t + \beta _{d-2}-
\zeta ^{q+1}\beta _{d-1} \Big) \pi ^{d-1} +(\beta _{d-1} + \alpha _{d}
t) \pi ^{d}.
\end{align}
It follows from Equations \text{\eqref{Eq:Gjt1}} and \text{\eqref{Eq:tFt}} that
%
\begin{align}
G_{j}(t)=& F(t) + \big(tF(t)-\mathrm{Tr}(\zeta )F(t)\big)g_{j} \pi ^{j}
+O(\pi ^{j+1})
\nonumber
\\
= & \sum _{k=0}^{j-1} (\beta _{k} t + \alpha _{k})\pi ^{k} + \Big ( (
\beta _{j}+ \alpha _{0} g_{j}) t + \alpha _{j} - g_{j}\zeta ^{q+1}
\beta _{0} -g_{j} \mathrm{Tr}\zeta \alpha _{0} \Big) \pi ^{j}.
\label{Eq:Gjt2}
\end{align}
With the assumption $\deg (G_{j}(t)) \leqslant 2j$, the coefficient of
$t\pi ^{j}$ in $G_{j}(t)$ should be $0$, i.e.
$ g_ {j}= -\frac{\beta _{j}}{\alpha _{0}}$. Thus, Equations \text{\eqref{Eq:omstarF}} and \text{\eqref{Eq:Gjt1}} show
\begin{equation*}
\frac{G_{j}(t)}{\pi ^{j}} \cdot \omega _{*}= \frac{G_{j}(t)}{F(t)}
\cdot \pi ^{d-j}\omega _{\pi}=\Big( 1 -\frac{\beta _{j}}{\alpha _{0}} (t-
\mathrm{Tr}{\zeta}) \pi ^{j} \Big) \pi ^{d-j} \omega _{\pi }+O(\pi ^{d+1})
\omega _{\pi}.
\end{equation*}
By combining Equations \text{\eqref{Eq:resGom1}}-\text{\eqref{Eq:res3}} it follows that
\begin{align*}
\left \langle{ f , \frac{G_{j}(t)}{\pi ^{j}} \cdot \omega _{*} }
\right \rangle = \delta (f, x^{d-1-j} y ), \quad \text{for each} ~f
\in \mathcal B_{1}.
\end{align*}
Using Equation \text{\eqref{Eq:Gjt2}} we achieve the equality
%
\begin{align}
\label{Eq:ThGw}
\frac{G_{j}(t)}{\pi ^{j}} = w(x^{d-1-j}y ).
\end{align}

Case 3: $e= x^{d-j}$, for each $j=1, \ldots , d-1$.

In this case, we consider the polynomials
$\{ H_{j}(t)\}_{1 \leqslant j \leqslant d-1}$ such that
%
\begin{align}
\label{Eq:Hjt1}
\frac{H_{j}(t)}{F(t)} = (t-\mathrm{Tr}{\zeta}) + (t-\mathrm{Tr}{\zeta})h_{j}
\pi ^{j} +O( \pi ^{j+1}),
\end{align}
where $\deg (H_{j}(t)) \leqslant 2j$ and $h_{j}\in \mathbb F_{q}$. Applying
the same arguments as in Case 2, we have
$ h_{j} = -\frac{\alpha _{j}}{\alpha _{0}}$ and
%
\begin{multline}
\label{Eq:ThHt}
H_{j}(t) = \sum _{k=0}^{j-1} ( (t-\mathrm{Tr}\zeta ) \alpha _{k} +
\beta _{k-1} - \zeta ^{q+1}\beta _{k} ) \pi ^{k} + \Big ( \beta _{j-1}-
\zeta ^{q+1}\beta _{j} + \frac{\alpha _{j}}{\alpha _{0}} \zeta ^{q+1}
\beta _{0} \Big) \pi ^{j} .
\end{multline}
Moreover,
\begin{align*}
\frac{H_{j}(t)}{\pi ^{j}} = w(x^{d-j})
\end{align*}
satisfies the equality
\begin{align*}
\left \langle{ f, \frac{H_{j}(t)}{\pi ^{j}} \cdot \omega _{*} }
\right \rangle =\delta (f, x^{d-j}) , \qquad ~ \text{for each }f \in
\mathcal{B}_{1} .
\end{align*}
Case 4: $e= 1$.

We let $I_{d}(t)$ be polynomial of degree $ \leqslant 2d$ such that
%
\begin{align}
\label{Eq:Idt1}
\frac{I_{d}(t)}{F(t)} &= t + g_{d} + (t -\mathrm{Tr}{\zeta}) h_{d}
\pi ^{d} +O(\pi ^{d+1}),
\end{align}
and
%
\begin{align}
\label{Eq:Idt2}
I_{d}(t) = ((\mathrm{Tr}\zeta +
\frac{\beta _{0} \zeta ^{q+1}}{\alpha _{0} } ) \beta _{0} + \alpha _{0})t
+O(\pi ).
\end{align}
As above, one can verify that
$ g_{d} = \frac{\beta _{0} \zeta ^{q+1}}{\alpha _{0} } $ and
$ h_{d} = -\frac{\alpha _{d}}{\alpha _{0}}$ under the assumption that
$\frac{I_{d}(t)}{\pi ^{d}}$ is generating by the basis
$\mathcal{B}_{1}$. In addition, a direct calculation shows that
%
\begin{align}
\label{Eq:ThIt}
\nonumber
I_{d}(t) =& \sum _{k=0}^{d-1} \left ( ( (\mathrm{Tr}\zeta +
\frac{\beta _{0} \zeta ^{q+1}}{\alpha _{0} } ) \beta _{k} + \alpha _{k})
t + (\frac{\beta _{0} \zeta ^{q+1}}{\alpha _{0} } \alpha _{k} +
\beta _{k-1} - \zeta ^{q+1} \beta _{k}) \right ) \pi ^{k}
\\
& +\left ( \beta _{d-1} + \mathrm{Tr}\zeta \alpha _{d} + 2 \zeta ^{q+1}
\frac{\alpha _{d}}{\alpha _{0}} \beta _{0} \right ) \pi ^{d},
\end{align}
which implies that
%
\begin{align}
\label{Eq:ThIdw}
\frac{I_{d}(t)}{\pi ^{d}} = w(1)
\end{align}
and
%
\begin{align}
\label{Eq:ThIw}
\left \langle{ f, \frac{I_{d}(t)}{\pi ^{d}}\cdot \omega _{*}} \right
\rangle =\delta (f, 1), \quad \quad \text{for each}~ f \in \mathcal{B}_{1}.
\end{align}
This completes the proof.
\end{proof}

\subsection{Representation of Weil pairing}
\label{sec5.2}

Our goal in this section is devoted to presenting the Weil pairing by means
of the motive structure of Drinfeld modules. Firstly, we need to introduce
the Motive representation of torsion points; for extended details one should
refer to \cite{vdHGJ04}. We maintain some notations from the previous sections
below. Let $\phi $ be a Drinfeld module of rank $r$, $I$ an ideal of
$\mathcal{A}$, and $\phi _{I} $ an annihilator of $ I $. Recalling
$\bar{M}(\phi )$ the motive structure of $\phi $, there exists an isomorphism
\begin{equation*}
\ker (\phi _{I}) \cong \mathrm{Hom}_{\mathcal{A}/I }((\bar{M}(\phi )/I
\bar{M}(\phi ))^{\tau }, \Omega _{I} ).
\end{equation*}
Through Lang's Theorem in \cite{LS56}, i.e., the
$ \mathcal{A}\otimes _{\mathbb F_{q}} \bar{K}\{\tau \} $-isomorphism
\begin{equation*}
(\bar{M}(\phi )/ I \bar{M}(\phi ))^{\tau }\otimes _{\mathbb{F}_{q}}
\bar{K}\cong \bar{M}(\phi ) / I \bar{M}(\phi ),
\end{equation*}
it induces a natural isomorphism
%
\begin{equation}
\label{Equ:isomorphism}
\ker (\phi _{I}) \cong \mathrm{Hom}_{\mathcal{A}\otimes \bar{K}\{
\tau \}}(\bar{M}(\phi ), \Omega _{I}\otimes \bar{K}).
\end{equation}
%
\begin{defn}
\label{defn5.6}
Given a torsion $ \mu \in \ker (\phi _{I}) $, the equivalent homomorphism
\begin{equation*}
h_{\mu }\in \mathrm{Hom}_{\mathcal{A}\otimes \bar{K}\{\tau \}}(
\bar{M}(\phi ), \Omega _{I}\otimes \bar{K}),
\end{equation*}
via the isomorphism \text{\eqref{Equ:isomorphism}} is called the motive representation
of $ \mu $.
\end{defn}
Suppose that $ \{x_{i}\}_{i=1}^{n} $ is a basis of $ \mathcal{A}/ I$ along
with the dual basis $ \{\omega _{j}\}_{j=1}^{n} \in \Omega _{I} $. Then
$h_{\mu }$ can be written as
%
\begin{align}
\label{Eq:hmr}
h_{\mu }(\tau ^{k}) = \sum _{j=1}^{n} \omega _{j} \otimes \tau ^{k}(
\phi _{x_{j}}(\mu )),
\end{align}
where $ \tau ^{k} \in \bar {M}(\phi ) $ with $k\in \mathbb{N}$.

Secondly, we define the notion of Drinfeld-Moore product based on the Moore
determinant.
%
\begin{defn}
\label{defn5.7}
The Moore determinant of variables $ x_{1},\ldots , x_{r} $ means the following
determinant
\begin{equation*}
\mathcal{M}(x_{1},\ldots ,x_{r}) = \det
\begin{pmatrix}
x_{1} & x_{1}^{q} & \cdots & x_{1}^{q^{r-1}}
\\
x_{2} & x_{2}^{q} & \cdots & x_{2}^{q^{r-1}}
\\
\vdots & \vdots & \cdots & \vdots
\\
x_{r} & x_{r}^{q} & \cdots & x_{r}^{q^{r-1}}
\end{pmatrix}
.
\end{equation*}
\end{defn}
%
\begin{defn}[Drinfeld-Moore Product]
\label{defn5.8}
Let $\phi $ be a Drinfeld module of rank $ r $ over the
$\mathcal{A}$-field $ L $. Given
$ \sum _{i} a_{1}^{(i)} \otimes a_{2}^{(i)} \otimes \cdots \otimes a_{r}^{(i)}
\in {\otimes}^{r}_{\mathbb{F}_{q}} \mathcal{A}$ and
$ \mu _{1},\mu _{2}, \ldots , \mu _ {r} \in \bar{L} $, the Drinfeld-Morre
product
\begin{equation*}
{\otimes}^{r}_{\mathbb F_{q}} \mathcal{A}\times \oplus ^{r}_{
\mathbb F_{q}}(\bar{L} ) \to \bar{L}
\end{equation*}
is represented by
\begin{equation*}
(\sum _{i} a_{1}^{(i)} \otimes a_{2}^{(i)} \otimes \cdots \otimes a_{r}^{(i)}
) *_{\phi }(\mu _{1},\mu _{2}, \ldots , \mu _{r} ) := \sum _{i}
\mathcal{M}(\phi _{a_{1}^{(i)}} (\mu _{1}) ,\phi _{a_{2}^{(i)}}(\mu _{2}),
\ldots , \phi _{a_{r}^{(i)}} (\mu _{r}) ),
\end{equation*}
where $ \mathcal{M} $ means the Moore determinant.
\end{defn}
It is trivial to check that the Drinfeld-Moore product satisfies the
$\mathbb{F}_{q} $-multilinear property. That is, for
$\mathbf{a} , \mathbf{b} \in {\otimes}^{r}_{\mathbb{F}_{q}}
\mathcal{A}$,
$ \mu _{1}, \mu _{1}',\ldots , \mu _{r} ,\mu _{r}' \in \bar{L} $,
\begin{equation*}
(\mathbf{a}+\mathbf{b})*_{\phi} (\mu _{1}, \ldots , \mu _{r})=
\mathbf{a} *_{\phi}(\mu _{1}, \ldots , \mu _{r}) +\mathbf{b} *_{\phi} (
\mu _{1}, \ldots , \mu _{r})
\end{equation*}
and
\begin{equation*}
\mathbf{a} *_{\phi} (\mu _{1}, \ldots ,(\mu _{i}+\mu _{i}') ,\ldots ,
\mu _{r})= \mathbf{a} *_{\phi}(\mu _{1}, \ldots , \mu _{i}, \ldots ,
\mu _{r}) +\mathbf{a} *_{\phi} (\mu _{1},\ldots ,\mu _{i}',\ldots ,
\mu _{r}) .
\end{equation*}
Using this notation, the Moore determinant can be written as
$ \mathbf{1} *_{\phi} (\mu _{1},\ldots , \mu _{r})$, where
$\mathbf{1}$ is the unit of
$\otimes ^{r}_{\mathbb F_{q}} \mathcal{A}$. For $r = 1$, the Drinfeld-Moore
product reduces to the standard action of $ \mathcal{A}$ on
$ \bar{L} $, see Equation \text{\eqref{Eq:actiona}}. We can produce the Weil pairing
via the Drinfeld-Moore product.
%
\begin{thm}%
\label{Thm:wo}
Given an ideal $ I $ of $\mathcal{A}$, there exists a symmetric form
\begin{equation*}
\operatorname{WO}_{I}^{(r)} \in \operatorname{Sym}^{r}_{\mathbb F_{q}}
\mathcal{A}\subseteq {\otimes}_{\mathbb{F}_{q}}^{r} \mathcal{A},
\end{equation*}
depending on $ I $ such that the Weil pairing of any rank $r$ Drinfeld
module $ \phi $ over $\mathcal{A}$-field $L$ is represented by
\begin{align*}
\wedge ^{r}\ker (\phi _{I}) &\to \bar{L}
\\
(\mu _{1},\ldots , \mu _{r} ) &\mapsto \operatorname{WO}_{I}^{(r)} *_{
\phi}(\mu _{1},\ldots , \mu _{r} ),
\end{align*}
where $ \mu _{i}$'s denotes the torsion points of $\phi _{I} $. In particular,
if $ \psi = \wedge ^{r} \phi $ denotes the symmetric product, then
$ \operatorname{WO}_{I}^{(r)} *_{\phi}(\mu _{1},\ldots , \mu _{r} ) $ is
annihilated by $ \psi _{I} $.
\end{thm}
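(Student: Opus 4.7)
The plan is to construct $\operatorname{WO}_I^{(r)}$ from the motive-theoretic representation of torsion points (Definition~\ref{defn5.6}) combined with the dual basis of $\Omega_I$ over $\mathcal{A}/I$ built in Proposition~\ref{Prop:Thbd}. Both the symmetric-tensor property and the annihilation by $\psi_I$ should drop out of the construction itself rather than be checked after the fact.

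Concretely, given $\mu_1,\dots,\mu_r\in\ker(\phi_I)$, I attach to each $\mu_i$ its motive representation $h_{\mu_i}\in\mathrm{Hom}_{\mathcal{A}\otimes\bar K\{\tau\}}(\bar M(\phi),\Omega_I\otimes\bar K)$, which in a fixed basis $\{e_j\}_{j=1}^n$ of $\mathcal{A}/I$ and dual basis $\{\omega_j\}\subset\Omega_I$ takes the explicit form~\eqref{Eq:hmr}. Following van~der~Heiden, the motive of $\psi=\wedge^r\phi$ is the top exterior power $\wedge^r_{\mathcal{A}\otimes\bar K}\bar M(\phi)$, and the Weil pairing is then realised as the induced wedge $h_{\mu_1}\wedge\cdots\wedge h_{\mu_r}\colon\bar M(\psi)\to\Omega_I\otimes\bar K$, reconverted to a torsion point of $\psi_I$ via the inverse of the isomorphism~\eqref{Equ:isomorphism} applied to $\psi$.

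Evaluating this wedge on the canonical generator $1\wedge\tau\wedge\cdots\wedge\tau^{r-1}$ of $\bar M(\psi)$ and expanding multilinearly in the basis produces a sum of terms $c_{j_1,\dots,j_r}\,\omega_{j_0}\otimes\mathcal{M}\bigl(\phi_{e_{j_1}}(\mu_1),\dots,\phi_{e_{j_r}}(\mu_r)\bigr)$, where the constants $c_{j_1,\dots,j_r}\in\mathbb{F}_q$ record how the generator decomposes in the product basis and thus depend only on $I$. Setting
\begin{equation*}
\operatorname{WO}_I^{(r)}\;:=\;\sum_{(j_1,\dots,j_r)}c_{j_1,\dots,j_r}\,e_{j_1}\otimes\cdots\otimes e_{j_r},
\end{equation*}
we have $\operatorname{WO}_I^{(r)}*_\phi(\mu_1,\dots,\mu_r)\in\ker(\psi_I)$ by construction, and axioms~(1), (2), (4), (5) of the Weil pairing follow at once from functoriality of $\mu\mapsto h_\mu$ together with the alternating character of the Moore determinant.

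The main obstacle will be to guarantee that $\operatorname{WO}_I^{(r)}$ lies in $\operatorname{Sym}^r_{\mathbb{F}_q}\mathcal{A}$ rather than merely in $\otimes^r\mathcal{A}$. The key observation is that the wedge construction produces coefficients alternating under permutations of the indices $(j_1,\dots,j_r)$, so combined with the Moore determinant's own alternating behaviour under row permutations, the sum is stable under the diagonal $S_r$-action on indices and Moore-columns; consequently $\operatorname{WO}_I^{(r)}$ admits a canonical pairwise-symmetric representative in $\operatorname{Sym}^r_{\mathbb{F}_q}\mathcal{A}$, matching the terminology announced in the introduction. Non-degeneracy (axiom~(3)), together with Lemma~\ref{Lem:pp}, finally ensures this representative is non-trivial.
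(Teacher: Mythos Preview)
Your overall framework matches the paper's: attach to each $\mu_i$ its motive representation $h_{\mu_i}$, form the wedge $h_{\mu_1}\wedge\cdots\wedge h_{\mu_r}$, evaluate on the generator $1_\psi=1\wedge\tau\wedge\cdots\wedge\tau^{r-1}$ of $\bar M(\psi)$, and read off a torsion point of $\psi_I$. Where your sketch goes wrong is the middle step that actually produces the coefficients of $\operatorname{WO}_I^{(r)}$.

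When you expand $(h_1\wedge\cdots\wedge h_r)(1_\psi)$ multilinearly you do \emph{not} get terms of the form $c_{j_1,\ldots,j_r}\,\omega_{j_0}\otimes\mathcal M(\cdots)$ with scalar constants recording a decomposition of the generator. You get, with no extra constants at all,
\[
\sum_{j_1,\ldots,j_r}\mathcal M\bigl(\phi_{e_{j_1}}(\mu_1),\ldots,\phi_{e_{j_r}}(\mu_r)\bigr)\cdot\omega_{j_1}\otimes\cdots\otimes\omega_{j_r},
\]
which lands in $(\Omega_I\otimes\bar K)^{\otimes r}$, not in $\Omega_I\otimes\bar K$. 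The paper then performs two further reductions that your proposal skips. First, freeze the indices $j_1,\ldots,j_{r-1}$ and regard the remaining factor $\sum_k\mathcal M(\cdots)\otimes\omega_k$ as an element of $\mathrm{Hom}(\bar M(\psi),\Omega_I\otimes\bar K)$, hence as a torsion point $\nu_{j_1,\ldots,j_{r-1}}\in\ker\psi_I$ via~\eqref{Equ:isomorphism}. Second, and this is the crucial point, write each $\omega_j=a_j\omega_*$ with $a_j\in\mathcal A/I$ and use the $\mathcal A$-module structure of $\Omega_I$ to convert $\nu_{j_1,\ldots,j_{r-1}}\otimes_{\mathcal A}\omega_{j_1}\otimes\cdots\otimes\omega_{j_{r-1}}$ into $\psi_{a_{j_1}\cdots a_{j_{r-1}}}(\nu_{j_1,\ldots,j_{r-1}})\otimes\omega_*^{\otimes(r-1)}$. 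The coefficients $\Gamma_{j_1,\ldots,j_r}$ that define $\operatorname{WO}_I^{(r)}$ are then the structure constants of the products $a_{j_1}\cdots a_{j_{r-1}}$ in the basis $\{e_j\}$ of $\mathcal A/I$, i.e.\ the multiplication table of the dual-basis elements, not a decomposition of $1_\psi$. This is exactly what feeds into the explicit formula~\eqref{Eq:Wo1} and the computations of Section~\ref{sec5.3}. Without this identification your constants $c_{j_1,\ldots,j_r}$ are undefined, and the claimed independence from $\phi$ (which is the whole content of the theorem) is not established.
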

\begin{proof}
Suppose that $\phi _{I} $ is an annihilator of $ I $. Let us fix some torsion
points $ \mu _{1}, \ldots , \mu _{r} $ and a basis
$ \{ e_{j} \}_{j=1}^{n} $ of $ \mathcal{A}/ I $. Denote by
$ \{\omega _{i} \}_{i=1}^{n} $ the dual basis of
$ \{ e_{j} \}_{j=1}^{n} $, namely,
\begin{equation*}
\langle e_{i} , \omega _{j} \rangle = \delta (i,j) .
\end{equation*}
As discussed previously, we let $h_{\mu _{i}}$ be the motive representation
of $ \mu _{i} $, i.e.
\begin{equation*}
h_{i}( \tau ^{k} ) = \sum _{j=1}^{n} \phi _{e_{j}} (\mu _{i} )^{q^{k}}
\omega _{j}.
\end{equation*}
Denote by $ S^{r} $ the symmetric group on numbers
$ 0,1,\ldots , r-1 $ and $\varepsilon (s)$ the permutation symbol for
$s \in S^{r}$. Assume that
$ 1_{\psi }:= \tau ^{0} \wedge \tau ^{1} \wedge \cdots \wedge \tau ^{r-1}$
is the basis of $ \bar{M}(\psi ) $. According to the definition of Drinfeld-Moore
product, for $s=(s_{0}, s_{2}, \ldots , s_{r-1})\in S^{r}$ we have
%
\begin{align}
(h_{1} \wedge h_{2} \wedge \cdots \wedge h_{r} ) (1_{\psi }) & =
\sum _{s \in S^{r}} \varepsilon (s) h_{1}(\tau ^{s_{0}}) \otimes h_{2}(
\tau ^{s_{2}}) \otimes \cdots \otimes h_{r}(\tau ^{s_{r-1}})
\nonumber
\\
& = \sum _{1\leqslant j_{1},\ldots , j_{r}\leqslant n} \mathcal{M} (
\phi _{e_{j_{1}}}(\mu _{1}), \ldots , \phi _{e_{j_{r}} }(\mu _{r}))
\otimes \omega _{j_{1}} \otimes \cdots \otimes \omega _{j_{r}}
\nonumber
\\
& = \sum _{ 1\leqslant j_{1},\ldots , j_{r-1}\leqslant n} H_{j_{1},j_{2},
\ldots , j_{r-1}} \otimes \omega _{j_{1}} \otimes \cdots \otimes
\omega _{j_{r-1}},
\label{Eq:h1hr}
\end{align}
where
%
\begin{align}
\label{ThH1}
H_{j_{1},\ldots , j_{r-1}} = \sum _{k=1} ^{n}\mathcal{M} \left (\phi _{e_{j_{1}}}(
\mu _{1}), \ldots ,\phi _{e_{j_{r-1}} }(\mu _{r-1}), \phi _{e_{k} }(
\mu _{r}) \right ) \otimes \omega _{k}.
\end{align}
From Equation \text{\eqref{Equ:isomorphism}}, there exists a unique torsion
$ \nu _{j_{1},\ldots , j_{r-1}} \in \ker \psi _{I} $ corresponding to
$H_{j_{1},\ldots , j_{r-1}}$ for each subscript
$ j_{1},\ldots , j_{r-1} $. In this way, we can reformulate
$ H_{j_{1},\ldots , j_{r-1}} $ as
%
\begin{align}
\label{ThH2}
H_{j_{1},\ldots ,j_{r-1}} = \sum _{k=1}^{n} \psi _{e_{j}} (\nu _{j_{1},
\ldots , j_{r-1}}) \otimes \omega _{k},
\end{align}
according to in Equation \text{\eqref{Eq:hmr}}. By comparing Equations \text{\eqref{ThH1}} and \text{\eqref{ThH2}} we have
%
\begin{align}
\psi _{e_{j_{r}}}(\nu _{j_{1},\ldots , j_{r-1}}) &=\mathcal{M} (\phi _{e_{j_{1}}}
(\mu _{1}), \ldots , \phi _{e_{j_{r}}}(\mu _{r}))
\label{eq116}
\\
&= (e_{j_{1}} \otimes \cdots \otimes e_{j_{r}} ) *_{\phi }(\mu _{1},
\ldots , \mu _{r}).
\label{psidma}
\end{align}
The Weil pairing of rank $r$ Drinfeld modules over the domain
$\mathcal{A}$ is achieved from Equation \text{\eqref{Eq:h1hr}} by replacing
$ H_{j_{1},\ldots , j_{r-1}} $ with $ v_{j_{1},\ldots , j_{r-1}}$; namely
%
\begin{align}
\label{Thwp1}
\operatorname{Weil}_{I} (\mu _{1}, \ldots , \mu _{r} ) = \sum _{ 1
\leqslant j_{1},\ldots , j_{r-1}\leqslant n} \nu _{j_{1},\ldots ,j_{r-1}}
\otimes _{\mathcal{A}} (\omega _{j_{1}} \otimes \cdots \otimes
\omega _{j_{r-1}}).
\end{align}
Let $ \omega _{*} $ be the generator of
$ I^{-1}\Omega _{\mathcal{A}} / \Omega _{\mathcal{A}} $ and set
$ \omega _{j} = a_{j} \omega _{*} $ for some
$ a_{j} \in \mathcal{A}/ I $. Substituting
$ \omega _{j} = a_{j} \omega _{*} $ into Equation \text{\eqref{Thwp1}} gives
%
\begin{align}
\operatorname{Weil}_{I}(\mu _{1},\ldots ,\mu _{r}) & = \sum _{ 1
\leqslant j_{1},\ldots , j_{r-1}\leqslant n} \psi _{a_{j_{1}}\cdots a_{j_{r-1}}}(
\nu _{j_{1}, \ldots , j_{r-1}}) \otimes \omega _{*}^{\otimes (r-1)}
\nonumber
\\
& = \sum _{ 1\leqslant j_{1},\ldots , j_{r}\leqslant n} \Gamma _{j_{1},
\ldots ,j_{r}} \cdot \psi _{e_{j_{r}}}(\nu _{j_{1}, \ldots , j_{r-1}})
\otimes \omega _{*}^{\otimes (r-1)}
\label{Eq:Thwp2}
,
\end{align}
where we adopt the expansion
\begin{equation*}
a_{j_{1}} \cdots a_{j_{r-1}} = \sum _{j=1}^{n} \Gamma _{j_{1},\ldots ,j_{r-1},
j} e_{j},
\end{equation*}
for some $ \Gamma _{j_{1},\ldots , j_{r-1}, j} \in \mathbb{F}_{q}$. Let
us denote
\begin{equation*}
\operatorname{WO}_{I}^{(r)} = \sum _{ 1\leqslant j_{1},\ldots , j_{r}
\leqslant n} \Gamma _{j_{1},\ldots ,j_{r}} \cdot e_{j_{1}}\otimes e_{j_{2}}
\otimes \cdots \otimes e_{j_{r}}.
\end{equation*}
Then Equation \text{\eqref{Eq:Thwp2}} is equivalently expressed as
%
\begin{align}
\label{Eq:WeilDMa}
\operatorname{Weil}_{I}(\mu _{1},\ldots ,\mu _{r}) & =
\operatorname{WO}_{I}^{(r)} *_{\phi }(\mu _{1},\ldots , \mu _{r}),
\end{align}
by omitting the term $\omega _{*}^{\otimes (r-1)}$.
\end{proof}

\subsection{Weil operator over domain $\mathcal{A}$}
\label{sec5.3}

\begin{defn}
\label{defn5.10}
The symmetric form
$ \operatorname{WO}_{I}^{(r)} \in \operatorname{Sym}_{\mathbb{F}_{q}}^{r}
\mathcal{A}$ in \text{Theorem~\ref{Thm:wo}} is called the rank $r$ Weil operator
of the ideal $I $ with respect to the generator
$ \omega _{*} \in \Omega _{I} $. Note that
$ \operatorname{WO}_{I}^{(r)} $ does not depend on the Drinfeld module
$ \phi $.
\end{defn}
Suppose that $\mathcal{A}$ has two generators $x,y$, i.e.,
\begin{equation*}
\mathcal{A}\cong \mathbb{F}_{q}[x,y]/ (\rho (x,y)).
\end{equation*}
Then
\begin{equation*}
\operatorname{Sym}_{\mathbb{F}_{q}} ^ {r} \mathcal{A}\cong \mathbb{F}_{q}[X_{1},Y_{1},
\ldots ,X_{r},Y_{r}] / ( \rho (X_{i},Y_{i}) )_{i=1,\ldots ,r}.
\end{equation*}
As a consequence, $\operatorname{WO}_{I}^{(r)} $ can be expressed as a
pairwise symmetric polynomial in variables
$X_{1},Y_{1},\ldots ,X_{r},Y_{r}$. Let $\mathcal{B} $ be a basis of quotient
algebra $\mathcal{A}/ I$ with the dual basis of the form
$ \{ w(E) \cdot \omega _{*} | E \in \mathcal{B} \} $. According to the
proof of \text{Theorem~\ref{Thm:wo}}, the rank $r$ Weil operator of the ideal
$I $ with respect to the generator $ \omega _{*} \in \Omega _{I} $ can
be computed by the formula
%
\begin{align}
\label{Eq:Wo1}
\operatorname{WO}_{I}^{(r)} = \sum _{E_{1},E_{2},\ldots , E_{r-1}
\in \mathcal{B} } E_{1} \otimes E_{2} \otimes \cdots \otimes E_{r-1}
\otimes (\prod _{i=1}^{r-1}w_{k}(E_{i})).
\end{align}

In order to represent the Weil pairing of Drinfeld modules of rank two,
we consider two polynomials $\operatorname{O}_{I} '$ and
$\operatorname{O}_{I} '' $ in variables $X_{1}$, $X_{2}$, $Y_{1}$,
$Y_{2}$.

\begin{notation}%
\label{No:Wo}
Let $I$ be the principal ideal generated by $ P(x,y) $ with parameters
$ \alpha _{j}, ~\beta _{j},~\alpha _{d} $ as in Equation \text{\eqref{Eq:P}}.
For the case $ \alpha _{0} \not =0 $, we define the pairwise symmetric
polynomial
%
\begin{align}
\label{Eq:woiw}
\nonumber
\operatorname{O}_{I} ' =& \beta _{d-1}+ \mathrm{Tr}\zeta \alpha _{d} +
2 \zeta ^{q+1} \frac{\alpha _{d}}{\alpha _{0}} \beta _{0}+\sum _{k=0}^{d-1}
(\mathrm{Tr}\zeta + \frac{\beta _{0} \zeta ^{q+1}}{\alpha _{0} } )
\beta _{k} \Big(X_{1}^{d-k-1}Y_{1}+X_{2}^{d-k-1}Y_{2}\Big)
\\
\nonumber
&+\sum _{k=0}^{d-1} \big(\frac{\beta _{0} \zeta ^{q+1}}{\alpha _{0} }
\alpha _{k} + \beta _{k-1} - \zeta ^{q+1} \beta _{k}\big) \Big(X_{1}^{d-k}+X_{2}^{d-k}
\Big)
\\
&+Y_{1}Y_{2} \sum _{
\stackrel{j\geqslant 0, k\ge 0}{0\leqslant j+k\leqslant d-2}}
\frac{\beta _{k}}{2}\Big(X_{1}^{j} X_{2}^{d-2-j-k}+ X_{1}^{d-2-j-k} X_{2}^{j}
\Big)
\\
\nonumber
&+\Big(Y_{1}+Y_{2}\Big) \sum _{
\stackrel{j\geqslant 0, k\ge 0}{0\leqslant j+k\leqslant d-1}}
\frac{\alpha _{k}}{2}\Big(X_{1}^{j} X_{2}^{d-1-j-k}+ X_{1}^{d-1-j-k} X_{2}^{j}
\Big)
\\
\nonumber
&-\sum _{\stackrel{j\geqslant 1, k\ge 0}{1\leqslant j+k\leqslant d-1}}
\frac{\alpha _{k} \mathrm{Tr}\zeta -\beta _{k-1}+\zeta ^{k+1}\beta _{k}}{2}
\Big(X_{1}^{j} X_{2}^{d-j-k}+ X_{1}^{d-j-k} X_{2}^{j}\Big).
\end{align}
For $\beta _{0}\neq 0$, we define
%
\begin{align}
\label{Eq:woiv}
\nonumber
\operatorname{O}_{I}'' =& - \mathrm{Tr}\zeta \alpha _{d} + \beta _{d-1}
- 2 \frac{\alpha _{0}\alpha _{d}}{ \beta _{0}} +\sum _{k=0}^{d-1}
\big( -\frac{\alpha _{0}}{\beta _{0} } \beta _{k} + \alpha _{k}\big)
\Big(X_{1}^{d-k-1}Y_{1}+X_{2}^{d-k-1}Y_{2}\Big)
\\
\nonumber
&-\sum _{k=0}^{d-1} \Big((\mathrm{Tr}\zeta +
\frac{\alpha _{0}}{\beta _{0} }) \alpha _{k} - \beta _{k-1} +\zeta ^{q+1}
\beta _{k}\Big)\Big(X_{1}^{d-k}+X_{2}^{d-k}\Big)
\\
&+Y_{1}Y_{2} \sum _{
\stackrel{j\geqslant 0, k\ge 0}{0\leqslant j+k\leqslant d-2}}
\frac{\beta _{k}}{2}\Big(X_{1}^{j} X_{2}^{d-2-j-k}+ X_{1}^{d-2-j-k} X_{2}^{j}
\Big)
\\
\nonumber
&+\Big(Y_{1}+Y_{2}\Big) \sum _{
\stackrel{j\geqslant 0, k\ge 0}{0\leqslant j+k\leqslant d-1}}
\frac{\alpha _{k}}{2}\Big(X_{1}^{j} X_{2}^{d-1-j-k}+ X_{1}^{d-1-j-k} X_{2}^{j}
\Big)
\\
&-\sum _{\stackrel{j\geqslant 1, k\ge 0}{1\leqslant j+k\leqslant d-1}}
\frac{\alpha _{k} \mathrm{Tr}\zeta -\beta _{k-1}+\zeta ^{k+1}\beta _{k}}{2}
\Big(X_{1}^{j} X_{2}^{d-j-k}+ X_{1}^{d-j-k} X_{2}^{j}\Big).
\nonumber
\end{align}
\end{notation}
Notice that when both $ \alpha _{0} $ and $ \beta _{0} $ are non-zero,
the polynomials $ \operatorname{O}' $ and $ \operatorname{O}'' $ are related
by the relation
\begin{equation*}
\operatorname{O}_{I}'-\operatorname{O}_{I}'' = \kappa \cdot (P(X_{1},Y_{1})+P(X_{2},Y_{2})),
\end{equation*}
where
$\kappa =(\mathrm{Tr}\zeta +
\frac{\beta _{0} \zeta ^{q+1}}{\alpha _{0} } ) +
\frac{\alpha _{0}}{\beta _{0}} \in \mathbb{F}_{q}^{*}$. Therefore, both
polynomials are identical as elements in
$\operatorname{Sym}_{\mathbb{F}_{q}}^{2}\mathcal{A}$.

Applying the formula \text{\eqref{Eq:Wo1}}, we obtain the explicit formula of
the Weil pairing of rank two Drinfeld modules over $\mathcal{A}$.

\begin{thm}
\label{thm5.12}
Let $ \phi $ be a Drinfeld module of rank two over the base
$\mathcal{A}= \mathbb{F}_{q} [x,y ] $. Let $I$ be the principle ideal generated
by $ P(x,y) $. Then the Weil pairing of $ \phi $ can be written as
\begin{equation*}
\operatorname{Weil}_{I}(\mu _{1},\mu _{2}) = \operatorname{WO}_{I}^{(2)}
*_{\phi }(\mu _{1},\mu _{2}),
\end{equation*}
where
$ \operatorname{WO}_{I}^{(2)} \in \operatorname{Sym}^{2}_{\mathbb F_{q}}
\mathcal{A}$ (expressed as a polynomial in
$\mathbb{F}_{q}[X_{1},Y_{1},X_{2},Y_{2}]$) is given by
\begin{equation*}
\operatorname{WO}_{I}^{(2)} =
\begin{cases}
\operatorname{O}_{I}' & \text{ for $ \alpha _{0} \not = 0 $ }
\\
\operatorname{O}_{I}'' & \text{ for $ \beta _{0} \not = 0$},
\end{cases}
\end{equation*}
corresponding to the standard differential
$ \omega _{*}:= \frac{1}{P(x,y)} \omega _{\pi }$.
\end{thm}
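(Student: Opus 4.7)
The strategy is to specialize the general formula \text{\eqref{Eq:Wo1}} of \text{Theorem~\ref{Thm:wo}} to $r=2$ and substitute the explicit dual bases of \text{Proposition~\ref{Prop:Thbd}}. For $r=2$, formula \text{\eqref{Eq:Wo1}} collapses to
\[
\operatorname{WO}_{I}^{(2)} = \sum_{E \in \mathcal{B}} E \otimes w(E) \in \mathcal{A}\otimes_{\mathbb{F}_{q}}\mathcal{A},
\]
where $\mathcal{B}$ is one of the two chosen bases of $\mathcal{A}/I$ and $\{w(E)\cdot \omega_{*}\}_{E\in \mathcal{B}}$ is its dual basis. Reading this element in $\operatorname{Sym}^{2}_{\mathbb{F}_{q}}\mathcal{A} \cong \mathbb{F}_{q}[X_{1},Y_{1},X_{2},Y_{2}]/\bigl(\rho(X_{1},Y_{1}),\,\rho(X_{2},Y_{2})\bigr)$ amounts to sending $x\otimes 1 \mapsto X_{1}$, $y\otimes 1 \mapsto Y_{1}$, $1\otimes x\mapsto X_{2}$, $1\otimes y\mapsto Y_{2}$, followed by symmetrization in the swap $(X_{1},Y_{1})\leftrightarrow (X_{2},Y_{2})$.

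In the branch $\alpha_{0}\ne 0$ I would take $\mathcal{B}=\mathcal{B}_{1}$ and plug in the four families of dual polynomials \text{\eqref{Eq:wxy}}--\text{\eqref{Eq:w1}}. The pair $(1,w(1))$ is the only source of monomials linear in exactly one of the pairs $(X_{i},Y_{i})$; after symmetrization it reproduces lines 1--2 of \text{\eqref{Eq:woiw}}. The pair $(x^{d-1}y,w(x^{d-1}y))$ contributes the scalar \text{\eqref{Eq:wxy}}, which together with the constant coming from $w(1)$ yields the leading constant of $\operatorname{O}_{I}'$. The remaining pairs $(x^{d-1-j}y,w(x^{d-1-j}y))$ and $(x^{d-j},w(x^{d-j}))$ for $1\leq j\leq d-1$ supply the genuinely bilinear terms; indexed by $j$ and by the inner summation index $k$ appearing in \text{\eqref{Eq:wxjy}} and \text{\eqref{Eq:wxj}}, these collate after symmetrization into the three double sums in lines 3--5 of \text{\eqref{Eq:woiw}}. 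The branch $\beta_{0}\ne 0$ is treated identically with $\mathcal{B}_{2}$ and the polynomials $v(E)$ of \text{Notation~\ref{No:dbwv}(2)}, producing $\operatorname{O}_{I}''$.

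When both $\alpha_{0}$ and $\beta_{0}$ are nonzero I would verify by direct polynomial expansion the identity
\[
\operatorname{O}_{I}' - \operatorname{O}_{I}'' = \kappa\bigl(P(X_{1},Y_{1}) + P(X_{2},Y_{2})\bigr), \qquad \kappa = \mathrm{Tr}\zeta + \frac{\beta_{0}\zeta^{q+1}}{\alpha_{0}} + \frac{\alpha_{0}}{\beta_{0}} \in \mathbb{F}_{q}^{*},
\]
already recorded in the excerpt. Since $P(X_{i},Y_{i})\in I$, this shows that the two candidate polynomials represent the same Weil operator whenever both branches are available, and evaluate identically under the Drinfeld--Moore product on $\ker\phi_{I}\times \ker\phi_{I}$. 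The principal obstacle is purely combinatorial: tracking each monomial of \text{\eqref{Eq:woiw}}--\text{\eqref{Eq:woiv}} with the correct multiplicity after symmetrizing a sum over the $2d$ basis elements of $\mathcal{A}/I$, and reducing the occasional $Y_{1}^{2}$ or $Y_{2}^{2}$ that arise inside $w(\cdot )$ or $v(\cdot )$ modulo $\rho(X_{i},Y_{i})$. No analytic input beyond \text{Theorem~\ref{Thm:wo}} and \text{Proposition~\ref{Prop:Thbd}} is required.
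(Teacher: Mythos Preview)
Your proposal is correct and follows essentially the same route as the paper: specialize formula \eqref{Eq:Wo1} from Theorem~\ref{Thm:wo} to $r=2$, insert the explicit dual bases of Proposition~\ref{Prop:Thbd}, and expand. Two minor remarks: the paper does not perform an explicit symmetrization step (the raw sum $\sum_{E\in\mathcal{B}_1}E\otimes w(E)$ already equals $\operatorname{O}_I'$ in $\operatorname{Sym}^2_{\mathbb F_q}\mathcal{A}$), and no $Y_i^2$ reductions are actually needed since every $w(e)$ and $v(e)$ in Notation~\ref{No:dbwv} is linear in $y$; also, the pair $(x^{d-1}y,\,w(x^{d-1}y))$ contributes a monomial $X_1^{d-1}Y_1$ times a constant, not a pure constant.
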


\begin{proof}
We maintain the notation in Section~\ref{Sec:DB}. The computation is straightforward.
For the case $ \alpha _{0} \not = 0 $, we have the basis
$ \mathcal B_{1} $ of $ \mathcal{A}/ I $ and its dual
$ \{ w(E) \omega _{*} | E \in B_{1} \} $ with respect to the generator
$\omega _{*} $ by \text{Proposition~\ref{Prop:Thbd}}. Applying Equation \text{\eqref{Eq:Wo1}} for $ \mathcal B_{1} $ yields
%
\begin{align}
\label{Eq:CoWo1}
\begin{aligned}
\operatorname{WO}_{I}^{(r)} =&1\otimes w(1)+ x^{d-1}y\otimes w(x^{d-1}y)+
\sum _{j=1}^{d-1} x^{d-1-j}y\otimes w(x^{d-1-j}y)\\
&+\sum _{j=1}^{d-1} x^{d-j}
\otimes w(x^{d-j}).
\end{aligned}
\end{align}
Substituting $ w(1)$, $w(x^{d-1}y)$, $w(x^{d-1-j}y)$ and
$w(x^{d-j})$ (see Equations \text{\eqref{Eq:wxy}}-\text{\eqref{Eq:w1}}) into Equation \text{\eqref{Eq:CoWo1}}, we have
\begin{align*}
\label{Eq:CoWoiw}
\nonumber
\operatorname{WO}_{I}^{(r)} =& \sum _{k=1}^{d-1} \left (\! ( (
\mathrm{Tr}\zeta + \frac{\beta _{0} \zeta ^{q+1}}{\alpha _{0} } )
\beta _{k} + \alpha _{k}) X_{2}^{d-k-1}Y_{2} + (
\frac{\beta _{0} \zeta ^{q+1}}{\alpha _{0} } \alpha _{k} + \beta _{k-1}
{-} \zeta ^{q+1} \beta _{k}) X_{2}^{d-k}\! \right )
\\
\nonumber
& +\left ( \beta _{d-1} + \mathrm{Tr}\zeta \alpha _{d} + 2 \zeta ^{q+1}
\frac{\alpha _{d}}{\alpha _{0}} \beta _{0} \right ) + ( (\mathrm{Tr}
\zeta + \frac{\beta _{0} \zeta ^{q+1}}{\alpha _{0} } ) \beta _{0} +
\alpha _{0}) X_{2}^{d-1}Y_{2}
\\
&+
\frac {(\beta _{0}\zeta + \alpha _{0} )(\beta _{0} \zeta ^{q} + \alpha _{0} )}{\alpha _{0}}
X_{1}^{d-1}Y_{1}
\\
\nonumber
&+ \sum _{j=1}^{d-1} X_{1}^{d-1-j}Y_{1} \left (\sum _{k=0}^{j-1} (
\beta _{k} X_{2}^{j-k-1} Y_{2} + \alpha _{k} X_{2}^{j-k})+ \Big (
\alpha _{j}+ (\mathrm{Tr}\zeta +\frac{\beta _{0}}{\alpha _{0}}\zeta ^{q+1})
\beta _{j} \Big) \right )
\\
\nonumber
& + \sum _{j=1}^{d-1} X_{1}^{d-j} \Bigg( \sum _{k=0}^{j-1} ( (X_{2}^{j-1-k}Y_{2}-
\mathrm{Tr}\zeta X_{2}^{j-k}) \alpha _{k} + X_{2}^{j-k}\beta _{k-1} -X_{2}^{j-k}
\zeta ^{q+1}\beta _{k} )
\\
\nonumber
& + \Big ( \beta _{j-1}- \zeta ^{q+1}\beta _{j} +
\frac{\alpha _{j}}{\alpha _{0}} \zeta ^{q+1} \beta _{0} \Big) \Bigg),
\end{align*}
which coincides with $\operatorname{O}_{I}'$ given in \text{Notation~\ref{No:Wo}}.

The case $ \beta _{0} \not = 0 $ is analogous by applying the basis
$ \mathcal{B}_{2}^{*} $ and its dual in \text{Proposition~\ref{Prop:Thbd}}.
\end{proof}

Unlike the rank two case, the general Weil operator of rank
$r\geqslant 3$ is difficult to calculate and formulate. We demonstrate
how to compute the Weil operators of arbitrary rank
$ r \geqslant 2 $ for the cases $I=(x)$ and $I=(y)$. For
$ 1 \leqslant i \leqslant r $, we adopt the notations
\begin{equation*}
X_{i} = 1\otimes \cdots \otimes x\otimes \cdots \otimes 1,
\end{equation*}
where $x $ occurs in the $ i $-th position and $ Y_{i} $ similarly.

\subsubsection{Case: $I=(x)$}
\label{sec5.3.1}

Clearly, $\mathcal{B}_{1}= \{1, y\}$ is the basis of $ A/I$ with
$y^{2} = 0$. It follows from \text{Proposition~\ref{Prop:Thbd}} that
$\{ w(1)\omega _{*}, w(y) \omega _{*} \}$ is the dual basis, where
\begin{equation*}
\begin{cases}
w(1) = y
\\
w(y) = 1.
\end{cases}
\end{equation*}
For $r\geqslant 2$, from Equation \eqref{Eq:Wo1} the Weil operator of rank
$r$ is given by
\begin{align*}
\operatorname{WO}_{I}^{(r)} =&\sum _{e_{1},e_{2}, \cdots e_{r-1}\in
\mathcal{B}_{1}} e_{1} \otimes e_{2} \otimes \cdots \otimes e_{r-1}
\otimes \left (w(e_{1})w(e_{2})\cdots w(e_{r-1})\right )
\\
=& \sum _{i\geqslant 0}^{r-1} (w(y))^{i} (w(1))^{r-1-i}\sum _{1
\leqslant l_{1}<l_{2}\cdots < l_{i}\leqslant r-1}Y_{l_{1}}\cdots Y_{l_{i}}
\\
=& \sum _{1\leqslant l_{1}<l_{2}<\cdots <l_{r-2}\leqslant r-1}\Big( Y_{l_{1}}Y_{l_{2}}
\cdots Y_{l_{r-2}}Y_{r}\Big)+ Y_{1}Y_{2}\cdots Y_{{r-1}}
\\
=& \sum _{1\leqslant l_{1}<l_{2}<\cdots <l_{r-1}\leqslant r}\Big( Y_{l_{1}}Y_{l_{2}}
\cdots Y_{l_{r-1}}\Big).
\end{align*}

\subsubsection{Case: $I=(y)$}
\label{sec5.3.2}

Following the same procedure, we adopt the basis
$\mathcal{B}_{2}=\{1, x\}$ of $ \mathcal{A}/ I $ and its dual
$\{ v(1) \omega _{*}, v(x)\omega _{*}\}$, where
\begin{equation*}
\begin{cases}
v(1) = 1 - \zeta ^{q+1} x
\\
v(x) = -\zeta ^{q+1}.
\end{cases}
\end{equation*}
It follows from Equation \text{\eqref{Eq:rho}} that
\begin{equation*}
x^{2} = \zeta ^{-(q+1)} x.
\end{equation*}
Applying Equation \text{\eqref{Eq:Wo1}} again, we have
\begin{align*}
\operatorname{WO}_{I}^{(r)} =& 1-\zeta ^{(q+1)}\sum _{i=1}^{r}X_{i} +
\zeta ^{2(q+1)}\sum _{1\leqslant i<j\leqslant r}\Big( X_{i}X_{j}\Big)+
\cdots
\\
&+ (-1)^{k}\zeta ^{k(q+1)}\sum _{1\leqslant l_{1}<l_{2}<\cdots <l_{k}
\leqslant r}\Big( X_{l_{1}}X_{l_{2}}\cdots X_{l_{k}}\Big)
\\
&+\cdots +(-1)^{r-1}\zeta ^{(r-1)(q+1)}\sum _{1\leqslant l_{1}<l_{2}<
\cdots <l_{r-1}\leqslant r}\Big( X_{l_{1}}X_{l_{2}}\cdots X_{l_{r-1}}
\Big)
\\
=& \sum _{k=0}^{r-1}(-1)^{k}\zeta ^{k(q+1)}\sum _{1\leqslant l_{1}<l_{2}<
\cdots <l_{k}\leqslant r}\Big( X_{l_{1}}X_{l_{2}}\cdots X_{l_{k}}
\Big).
\end{align*}


%



%
%

\section*{Acknowledgements}
We would like to thank the editor and the anonymous reviewer for their valuable suggestions and detailed comments that we have used to improve the quality of our manuscript.

 \bibliographystyle{amsplain}
 \bibliography{paper}

\begin{bibdiv}
\begin{biblist}

\bib{AGW86}{article}{
      author={Anderson, G.~W.},
       title={t-motives},
        date={1986},
     journal={Duke Math. J.},
      volume={53},
       pages={457\ndash 502},
}

\bib{BK95}{article}{
      author={Bae, S.},
      author={Kang, P.-L.},
       title={Rank one {D}rinfeld modules on hyperelliptic curves},
        date={1995},
     journal={Bull. Austral. Math. Soc.},
      volume={52},
       pages={85\ndash 90},
}

\bib{CL35}{article}{
      author={Carlitz, L.},
       title={On certain functions connected with polynomials in a {G}alois
  field},
        date={1935},
     journal={Duke Math. J.},
      volume={1},
       pages={137\ndash 168},
}

\bib{Car38}{article}{
      author={Carlitz, L.},
       title={A class of polynomials},
        date={1938},
     journal={Trans. Amer. Math. Soc.},
      volume={43},
       pages={167\ndash 182},
}

\bib{DVG74}{article}{
      author={Drinfeld, V.~G.},
       title={Elliptic modules ({R}ussian)},
        date={1974},
     journal={Mat. Sbornik},
      volume={94},
       pages={594\ndash 627},
}

\bib{DH94}{article}{
      author={Dummit, D.~S.},
      author={Hayes, D.},
       title={Rank-one {D}rinfeld modules on elliptic curves},
        date={1994},
     journal={Math. Comp.},
      volume={62(206)},
       pages={875\ndash 883},
}

\bib{Goss96}{book}{
      author={Goss, D.},
       title={Basic structures of function field arithmetic},
   publisher={Springer-Verlag, Berlin},
        date={1996},
}

\bib{GP18}{article}{
      author={Green, N.~E.},
      author={Papanikolas, M.~A},
       title={Special {L}-values and shtuka functions for {D}rinfeld modules on
  elliptic curves},
        date={2018},
     journal={Res. Math. Sci.},
      volume={5(1)},
       pages={47 pp},
}

\bib{HY93}{article}{
      author={Hamahata, Y.},
       title={Tensor products of {D}rinfeld modules and $v$-adic
  representations},
        date={1993},
     journal={Manuscripta Math.},
      volume={79},
       pages={307\ndash 327},
}

\bib{KJ21}{article}{
      author={Katen, J.},
       title={Explicit {W}eil-pairing for {D}rinfeld modules},
        date={2021},
     journal={Int. J. Number Theory},
      volume={17(9)},
       pages={2131\ndash 2152},
}

\bib{LS56}{article}{
      author={Lang, S.},
       title={Algebraic groups over finite fields},
        date={1956},
     journal={Amer. J. Math.},
      volume={78},
       pages={555\ndash 563},
}

\bib{P23}{book}{
      author={Papikian, M.},
       title={Drinfeld modules},
   publisher={Springer, Cham},
        date={2023},
}

\bib{PB21}{incollection}{
      author={Poonen, B.},
       title={Introduction to {D}rinfeld modules},
        date={2022},
   booktitle={Arithmetic, geometry, cryptography, and coding theory 2021},
      series={Contemp. Math.},
      volume={779},
   publisher={Amer. Math. Soc., [Providence], RI},
       pages={167\ndash 186},
}

\bib{S09}{book}{
      author={Stichtenoth, H.},
       title={Algebraic function fields and codes},
     edition={Second},
      series={Graduate Texts in Mathematics},
   publisher={Springer-Verlag, Berlin},
        date={2009},
}

\bib{Tha04}{book}{
      author={Thakur, D.~S.},
       title={Function field arithmetic},
   publisher={World Scientific, River Edge, NJ},
        date={2004},
}

\bib{vdHGJ04}{article}{
      author={van~der Heiden, G.~J.},
       title={Weil-pairing for {D}rinfeld modules},
        date={2004},
     journal={Monatsh. Math.},
      volume={143(2)},
       pages={115\ndash 143},
}

\bib{V-S06}{book}{
      author={Villa-Salvador, G.},
       title={Topics in the theory of algebraic function fields},
   publisher={Birkh{\"u}user Boston, Boston},
        date={2006},
}

\bib{Wade41}{article}{
      author={Wade, L.~I.},
       title={Certain quantities transcendental over ${GF}(p^n, x)$},
        date={1941},
     journal={Duke Math. J.},
      volume={8},
       pages={701\ndash 720},
}

\bib{Wade46}{article}{
      author={Wade, L.~I.},
       title={Remarks on the {C}arlitz $\psi$-functions},
        date={1946},
     journal={Duke Math. J.},
      volume={13},
       pages={71\ndash 78},
}

\end{biblist}
\end{bibdiv}


\begin{thebibliography}{99}
\bibitem{AGW86}Anderson G. W.,  t-motives. \textit{Duke Math. J.},  \textbf{53}: 457–502, 1986.

\bibitem{CL35} Carlitz L., On certain functions connected with polynomials in a Galois field, \textit{Duke Math. J.}, \textbf{1}:
137–168, 1935.
 
 \bibitem{BK95} Bae S., Kang P.-L., Rank one Drinfeld modules on hyperelliptic curves. \textit{Bull. Austral. Math. Soc.}, \textbf{52}: 85-90, 1995.
 
 \bibitem{DVG74}
 Drinfeld V. G., Elliptic modules (Russian). \textit{Mat. Sbornik}  \textbf{94}:  594–627, 1974, translated in Math.
 \textit{USSR Sbornik}, \textbf{23}:  561–592, 1974.

\bibitem{E-GA14} El-Guindy A.,  Legendre Drinfeld modules and universal supersingular polynomials. \textit{
Int. J. Number Theory},
\textbf{10(5)}: 1277-1289, 2014.

\bibitem{HY93}
 Hamahata Y.,  Tensor products of Drinfeld modules and $v$-adic representations. \textit{Manuscripta
Math.}, \textbf{79}: 307–327, 1993.

\bibitem{HDR81} Hayes D. R., Analytic Class Number Formulas in Function Fields. \textit{Invent. math.}, \textbf{65}: 49-69, 1981.

\bibitem{KJ21} Katen J., Explicit Weil pairing for Drinfeld modules. \textit{
Int. J. Number Theory}, \textbf{17(9)}: 2131–2152,  2021.

\bibitem{LS56}  Lang S.,  Algebraic groups over finite fields. \textit{Amer. J. Math.}, \textbf{78}: 555–563, 1956.
          
\bibitem{vdHGJ04} van der Heiden G. J., Weil pairing for Drinfeld modules. \textit{Monatsh. Math.}, \textbf{143(2)}: 115–143, 2004.
\end{thebibliography}

\end{document}